\newcommand{\mathbbm}[1]{\text{\usefont{U}{bbm}{m}{n}#1}}
\newcommand{\be}{\beta}
\newcommand{\id}{\mbox{Id}}
\newcommand{\1}{{\bf 1}}
\newcommand{\lati}{\tilde{\la}}
\newcommand{\betati}{\tilde{\beta}}
\newcommand{\R}{\mathbb R}
\newcommand{\cf}{\mathcal F}
\newcommand{\cw}{\mathcal W}
\newcommand{\al}{\alpha}
\newcommand{\la}{\lambda}
\newtheorem{theorem}{Theorem}[section]
\newtheorem{definition}[theorem]{Definition}
\newtheorem{lemma}[theorem]{Lemma}
\newtheorem{proposition}[theorem]{Proposition}
\theoremstyle{remark}
\newtheorem{remark}[theorem]{Remark}
    \pgfmathsetlength{\pgf@xb}{\pgfkeysvalueof{/pgf/outer xsep}}%
    \pgfmathsetlength{\pgf@yb}{\pgfkeysvalueof{/pgf/outer ysep}}%
\colorlet{symbols}{blue!90!black}
\colorlet{testcolor}{green!60!black}
\def\symbol#1{\textcolor{symbols}{#1}}
\def\1{\mathbf{\symbol{1}}}
\def\drawx{\draw[-,solid] (-3pt,-3pt) -- (3pt,3pt);\draw[-,solid] (-3pt,3pt) -- (3pt,-3pt);}
\tikzset{
	root/.style={circle,fill=testcolor,inner sep=0pt, minimum size=2mm},
	dot/.style={circle,fill=black,inner sep=0pt, minimum size=1mm},
	var/.style={circle,fill=black!10,draw=black,inner sep=0pt, minimum size=2mm},
	dotred/.style={circle,fill=black!50,inner sep=0pt, minimum size=2mm},
	generic/.style={semithick,shorten >=1pt,shorten <=1pt},
	dist/.style={ultra thick,draw=testcolor,shorten >=1pt,shorten <=1pt},
	testfcn/.style={ultra thick,testcolor,shorten >=1pt,shorten <=1pt,<-},
	testfcnx/.style={ultra thick,testcolor,shorten >=1pt,shorten <=1pt,<-,
		postaction={decorate,decoration={markings,mark=at position 0.6 with {\drawx}}}},
	kprime/.style={semithick,shorten >=1pt,shorten <=1pt,densely dashed,->},
	kprimex/.style={semithick,shorten >=1pt,shorten <=1pt,densely dashed,->,
		postaction={decorate,decoration={markings,mark=at position 0.4 with {\drawx}}}},
	kernel/.style={semithick,shorten >=1pt,shorten <=1pt,->},
	multx/.style={shorten >=1pt,shorten <=1pt,
		postaction={decorate,decoration={markings,mark=at position 0.5 with {\drawx}}}},
	kernelx/.style={semithick,shorten >=1pt,shorten <=1pt,->,
		postaction={decorate,decoration={markings,mark=at position 0.4 with {\drawx}}}},
	kernel1/.style={->,semithick,shorten >=1pt,shorten <=1pt,postaction={decorate,decoration={markings,mark=at position 0.45 with {\draw[-] (0,-0.1) -- (0,0.1);}}}},
	kernel2/.style={->,semithick,shorten >=1pt,shorten <=1pt,postaction={decorate,decoration={markings,mark=at position 0.45 with {\draw[-] (0.05,-0.1) -- (0.05,0.1);\draw[-] (-0.05,-0.1) -- (-0.05,0.1);}}}},
	kernelBig/.style={semithick,shorten >=1pt,shorten <=1pt,decorate, decoration={zigzag,amplitude=1.5pt,segment length = 3pt,pre length=2pt,post length=2pt}},
	rho/.style={dotted,semithick,shorten >=1pt,shorten <=1pt},
	renorm/.style={shape=circle,fill=white,inner sep=1pt},
	labl/.style={shape=rectangle,fill=white,inner sep=1pt},
	xi/.style={circle,fill=symbols!10,draw=symbols,inner sep=0pt,minimum size=1.2mm},
	xix/.style={crosscircle,fill=symbols!10,draw=symbols,inner sep=0pt,minimum size=1.2mm},
	xib/.style={circle,fill=symbols!10,draw=symbols,inner sep=0pt,minimum size=1.6mm},
	xibx/.style={crosscircle,fill=symbols!10,draw=symbols,inner sep=0pt,minimum size=1.6mm},
	not/.style={circle,fill=symbols,draw=symbols,inner sep=0pt,minimum size=0.5mm},
	>=stealth,
	}
\def\DeclareSymbol#1#2#3{\expandafter\gdef\csname MH@symb@#1\endcsname{\tikz[baseline=#2,scale=0.15,draw=symbols]{#3}}\expandafter\gdef\csname MH@symb@#1s\endcsname{\scalebox{0.7}{\tikz[baseline=#2,scale=0.15,draw=symbols]{#3}}}}
\def\<#1>{\csname MH@symb@#1\endcsname}
\newcommand{\luxor}{\mathbf{\Psi}}
\newcommand{\cherry}{\mathbf{\Psi}^{\mathbf{2}}}
\date{\today}
\title{Multilinear smoothing and local well-posedness of a stochastic quadratic nonlinear Schr\"{o}dinger equation}
\numberwithin{equation}{section}
\begin{document}

\begin{center}
{\large\textbf{
Multilinear smoothing and local well-posedness of a stochastic quadratic nonlinear Schr\"{o}dinger equation}}\\~\\ Nicolas Schaeffer\footnote{Institut \'Elie Cartan, Universit\' e de Lorraine, BP 70239, 54506 Vandoeuvre-l\`es-Nancy, France. }
\end{center}

\smallskip
{\small \noindent {\bf Abstract.}}
In this article, we study a $d$-dimensional stochastic  quadratic nonlinear Schr\"{o}dinger equation (SNLS), driven by a fractional derivative (of order $-\alpha<0$) of a space-time white noise:
\begin{equation*}
\left\{
\begin{array}{l}
i\partial_t u-\Delta u= \rho^2 |u|^2 + \langle \nabla \rangle^{-\alpha}\dot{W} \, , \quad  t\in [0,T] \, , \, x\in \R^d \, ,\\
u_0=\phi\, ,
\end{array}
\right.
\end{equation*}
where $\rho :\mathbb{R}^d \rightarrow \mathbb{R}$ is a smooth compactly-supported function. When $\alpha < \frac{d}{2}$, the stochastic convolution is a function of time with values in a negative-order Sobolev space and the model has to be interpreted in the Wick sense by means of a time-dependent renormalization. When $1\leq d \leq 3$, combining both the classical Strichartz estimates and a deterministic local smoothing, we establish the local well-posedness of (SNLS) for a small range of $\alpha$, in the spirit of \cite{Schaeffer1}. Then, we revisit our arguments and establish multilinear smoothing on the second order stochastic term. This allows us to improve our local well-posedness result for some $\alpha$. We point out that this is the first result concerning a Schr\"{o}dinger equation on $\mathbb{R}^d$ driven by such an irregular noise and whose local well-posedness results from both a stochastic multilinear smoothing and a deterministic local one combined with Strichartz inequalities.

\tableofcontents
\newpage
\section{Introduction and main results}

In this article, we study a $d$-dimensional stochastic  quadratic nonlinear Schr\"{o}dinger equation, driven by a fractional derivative (of order $-\alpha<0$) of a space-time white noise:
\begin{equation}\label{dim-d-sch}
\left\{
\begin{array}{l}
i\partial_t u-\Delta u= \rho^2 |u|^2 + \langle \nabla \rangle^{-\alpha}\dot{W} \, , \quad  t\in [0,T] \, , \, x\in \R^d \, ,\\
u_0=\phi\, ,
\end{array}
\right.
\end{equation}
where, for any $\alpha >0$, $\langle \nabla \rangle^{-\alpha}$ is the Bessel potential of order $\alpha$, $\dot{W}$ denotes a space-time white noise on $\mathbb{R}^d$ and $\rho :\mathbb{R}^d \rightarrow \mathbb{R}$ is a smooth compactly-supported function.\\
Schr\"{o}dinger equations with polynomial nonlinearities and forced by white noises have aroused the curiosity of researchers for many years by giving birth to numerous challenging questions and because of their applications in physics. A major objective in this field would be for instance to obtain the local well-posedness of the following equation:
\begin{equation*}
\left\{
\begin{array}{l}
i\partial_t u-\Delta u= |u|^2 u + \dot{W} \, , \quad  t\in [0,T] \, , \, x\in D \, ,\\
u_0=\phi\, ,
\end{array}
\right.
\end{equation*}
where $D=\mathbb{R}^d$ or $D=\mathbb{T}^d$. Indeed, the cubic nonlinear Schr\"{o}dinger equation describes for instance the propagation of light waves in optical fibers or can be used to study a simplified model of Bose-Einstein condensates \cite{Erdos} or even freak waves in the ocean \cite{Henderson}. The space-time white noise $\dot{W}$ represents a stochastic perturbation whose values at each space-time points are independent and thus allows a generalization of the classical model.\\
Some progress have been made in this direction. Forlano, Ho and Wang \cite{Forlano} have managed to prove the local well-posedness of a stochastic cubic nonlinear Schr\"{o}dinger equation with almost space-time white noise on the one-dimensional torus, namely:
\begin{equation*}
\left\{
\begin{array}{l}
i\partial_t u-\partial^2_x u= |u|^2 u + \langle \nabla \rangle^{-\alpha}\dot{W} \, , \quad  t\in [0,T] \, , \, x\in \mathbb{T} \, ,\\
u_0=\phi\, ,
\end{array}
\right.
\end{equation*}
with $\alpha >0$. In the following, let us denote by $\<Psi>$ the associated stochastic convolution, defined by, for all $t \in [0,T]$, $\displaystyle \<Psi>(t,.)=-i\int_{0}^t \frac{e^{-i(t-t')\Delta}}{\langle \nabla \rangle^{\alpha}}\dot{W}(t',.)dt'$. Their key arguments have been to measure the regularity of $\<Psi>$ in terms of Fourier-Lebesgue spaces and, after a suitable renormalization of the equation, to establish a fixed-point argument in Bourgain spaces thanks to new trilinear estimates. We underline the fact that, in their paper, the stochastic convolution has for spatial regularity $s<\alpha-\frac{1}{p}$ so that, given $\alpha>0$, they can choose sufficiently large $p$ in such a way that $\<Psi>$ has for spatial regularity $s>0$ and is consequently a function of time with values in a space of functions. Then, the previous work has been extended to some nonlinearity of order $p>1$, that is of the form $|u|^{p-1}u$ (see \cite{oh-okamoto-last,oh-popovnicu-wang-last}), on $\mathbb{R}^d$. Another model is the one proposed by Deya, Schaeffer and Thomann \cite{Schaeffer1} and whose dynamic is described through the equation below:
\begin{equation}\label{fbm-sch}
\left\{
\begin{array}{l}
i\partial_t u-\Delta u= \rho^2 |u|^2 + \dot B \, , \quad  t\in [0,T] \, , \, x\in \mathbb{R}^d \, ,\\
u_0=\phi\, ,
\end{array}
\right.
\end{equation}
for $1\leq d \leq 3$ and with $\rho:\mathbb{R}^d \rightarrow \mathbb{R}$ a smooth compactly-supported function. In their article, the authors have considered a quadratic nonlinearity and a very rough noise, namely the derivative of a fractional (in time and in space) Brownian field. In what they call the rough case, that is when the stochastic convolution $\<Psi>$ is only a function of time taking values in a space of distributions, there is a difficulty in defining $|\<Psi>|^2$. Resorting to a Wick renormalization combined with a local smoothing effect of the Schr\"{o}dinger operator, they have established the local well-posedness of (\ref{fbm-sch}) in Sobolev spaces of negative order, namely $H^{-2\alpha}(\mathbb{R}^d)$, for small $\alpha >0$. We will make a comparison between their model and ours in Subsection \ref{pointfixe1}. In fact, the litterature concerning Schr\"{o}dinger equations with nonlinearities and additive white noise is rather poor. The main reasons are that the stochastic convolution has  very low regularity and that the Schr\"{o}dinger operator, contrary to the heat or waves operator, does not offer any form of global regularization. To end with, let us mention the work of Deng, Nahmod and Yue \cite{Deng}. In order to study the propagation of randomness under nonlinear dispersive equations, they have developed the theory of random tensors. Thanks to these new tools, they have been able to prove local well-posedness for semilinear Schr\"{o}dinger equations in spaces that are subcritical in the probabilistic scaling.
Before coming back to our model, let us present two other works that are not dealing with Schr\"{o}dinger equations but with waves equations (another kind of dispersive equations) but that share an important common point with our model. The first one is the article of Gubinelli, Koch and Oh \cite{Gubi} in which they have used ideas from paracontrolled calculus to establish the local well-posedness of the following equation:
\begin{equation*}
\partial^2_t u+(1-\Delta) u= -|u|^2 +\dot{W} \, , \quad  t\in [0,T] \, , \, x\in \mathbb{T}^3 \, .
\end{equation*}
It is well-known that the waves operator provides a gain of one derivative (measured for instance in term of Sobolev norms). What interests us in this work is that the authors have proved a $\frac{1}{2}$-extra smoothing for the convolution between the waves operator and the second order stochastic term: $\displaystyle\<IPsi2>=(\partial^2_t + (1-\Delta))^{-1}\<Psi2>$ where $\<Psi2>$ denotes a renormalized version of $\<Psi>^2$. That was the first time that such a smoothing resulting from stochastic tools combined with multilinear dispersive analysis was obtained. In a similar manner, Oh and Okamoto \cite{Oh} have obtained a $\frac{1}{4}$-extra smoothing for the convolution between the waves operator and the second order stochastic term when studying the model below:
\begin{equation*}
\partial^2_t u+(1-\Delta) u= -|u|^2 + \langle \nabla \rangle^{\alpha}\dot{W} \, , \quad  t\in [0,T] \, , \, x\in \mathbb{T}^2, \,
\end{equation*}
with $\alpha>0$.
The main novelty of our work lies in the fact that, as in the wave setting, we will be able to prove an extra smoothing for the convolution between the Schr\"{o}dinger operator and the second order stochastic term (see Proposition \ref{cerise-ssj} for more details) thanks to which we will be able to establish the local well-posedness of our model for some range of $\alpha$. This is the first time that such a smoothing resulting both from stochastic and dispersive analysis is proved. 

\subsection{Interpretation of our model}

In this article, our aim is to establish the local well-posedness of the following equation:
\begin{equation}\label{model}
\left\{
\begin{array}{l}
i\partial_t u-\Delta u= \rho^2 |u|^2 + \langle \nabla \rangle^{-\alpha}\dot{W} \, , \quad  t\in [0,T] \, , \, x\in \R^d \, ,\\
u_0=\phi\, ,
\end{array}
\right.
\end{equation}
for some $\alpha>0$ and where $\rho :\mathbb{R}^d \rightarrow \mathbb{R}$ is a fixed smooth compactly-supported function. Here, $\dot{W}$ is a space-time white noise, that is a distribution-valued random variable such that for every test function $f$, $\langle \dot{W}, f \rangle$ is a centered Gaussian random variable with variance
$$\mathbb{E}[|\langle \dot{W}, f \rangle|^2]=\|f\|^2_{L^2([0,T]\times \mathbb{R}^d)}.$$
As usual, in order to isolate the expected worse term, we intend to resort to the Da Prato and Debbusche trick. We thus consider the stochastic convolution denoted by $\<Psi>$, the solution of the linear equation
\begin{equation*}
\left\{
\begin{array}{l}
i\partial_t \<Psi>-\Delta \<Psi>= \langle \nabla \rangle^{-\alpha}\dot{W} \, , \quad  t\in [0,T] \, , \, x\in \R^d \, ,\\
\<Psi>(0,.)=0\, .
\end{array}
\right.
\end{equation*}
Rewriting equation (\ref{model}) under the mild form, we see that $u$ is solution of
$$u_t=e^{-it\Delta}\phi-i\int_0^t e^{-i(t-\tau)\Delta}( \rho^2 |u_{\tau}|^2)\, d\tau+ \<Psi>\, , \quad t\in [0,T] \, .$$
Consequently, $v:=u-\<Psi>$ has to verify the equation below
\begin{multline}\label{equation sur v}
v_t=e^{-it\Delta}\phi-i\int_0^t e^{-i(t-\tau)\Delta}( \rho^2 |v_{\tau}|^2)\, d\tau-i\int_0^t e^{-i(t-\tau)\Delta}( (\rho \overline{v}_{\tau})\cdot (\rho \<Psi>_{\tau}))\, d\tau\\
-i\int_0^t e^{-i(t-\tau)\Delta}((\rho v_{\tau})\cdot(\overline{\rho \<Psi>_{\tau}}))\, d\tau-i\int_0^t e^{-i(t-\tau)\Delta}(|\rho \<Psi>_{\tau}|^2)\, d\tau\, , \quad t\in [0,T] \, .
\end{multline}
We see that there are three major obstacles in the treatment of the latter equation, namely the proper definitions of the products $\rho v \cdot \overline{\rho \<Psi>}$, $\rho \overline{v} \cdot \rho \<Psi>$ and $|\<Psi>|^2$ and how to deal with the quadratic term $\rho^2 |v|^2$. Let us first focus our attention on $|\<Psi>|^2$. We need to measure the regularity of the stochastic convolution that is formally defined for every $t \in [0,T]$ by:
$$\<Psi>(t,.)=-i\int_{0}^t \frac{e^{-i(t-t')\Delta}}{\langle \nabla \rangle^{\alpha}}\dot{W}(t',.)dt'.$$
In fact, by considering a sequence of smooth processes $(\<Psi>_n)_{n \in \mathbb{N}}$ (see Section \ref{stochastic constr}), we will be able to propose a rigorous construction of $\<Psi>$ and to prove the following result:
\begin{proposition}\label{luxo1}
Let $d \geq 1$ be a space dimension and $T>0$ a positive time. Fix $\alpha$ a positive number and $$s >\frac{d}{2}-\alpha.$$ 
Let $\rho: \mathbb{R}^d \rightarrow \mathbb{R}$ be a $\mathcal{C}^{\infty}$ compactly-supported function and $2 \leq p <\infty$. Then, the sequence $(\rho\<Psi>_n)_{n \in \mathbb{N}}$ converges almost surely in the space $\mathcal{C}([0,T];\mathcal{W}^{-s,p}(\mathbb{R}^d))$.\\
Denoting by $\rho\<Psi>$ the almost sure limit, it holds that
$$\rho\<Psi> \in \mathcal{C}([0,T];\mathcal{W}^{-s,p}(\mathbb{R}^d)).$$
\end{proposition}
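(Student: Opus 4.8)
The plan is to reduce everything to second-moment computations via Gaussian hypercontractivity, and then to assemble these into a statement in $\mathcal{C}([0,T];\mathcal{W}^{-s,p})$ by a Kolmogorov continuity argument. Fix the smooth approximations $\<Psi>_n$ from Section~\ref{stochastic constr}, whose spatial Fourier transform at time $t$ carries a frequency truncation $\chi_n$ (with $0\le\chi_n\le1$ and $\chi_n\uparrow1$). For each $n$ and each fixed $(t,x)$, the quantity $\langle\nabla\rangle^{-s}(\rho\<Psi>_n)(t,x)$ is a Wiener integral against $\dot W$ with an explicit deterministic kernel, hence lies in the first Wiener chaos; by Gaussian hypercontractivity all its $L^q(\Omega)$-norms ($q\ge2$) are controlled by its $L^2(\Omega)$-norm. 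So it suffices to estimate second moments, then to use Minkowski's integral inequality (with $q\ge p$) to exchange the $L^q(\Omega)$ and $L^p_x$ norms.

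First I would compute the second moment. Using the Wiener isometry, Plancherel in the noise variable, and the fact that the Schr\"odinger multiplier $e^{i\tau|\xi|^2}$ has modulus one (so the time integral is trivial), one finds
$$\mathbb{E}\big[|\langle\nabla\rangle^{-s}(\rho\<Psi>_n)(t,x)|^2\big]\;=\;c\,t\int_{\mathbb{R}^d}\big|\widehat{\rho\,G_s(x-\cdot)}(\xi)\big|^2\,\langle\xi\rangle^{-2\alpha}\chi_n(\xi)^2\,d\xi,$$
where $G_s$ is the Bessel kernel of order $s$. Since $\rho\in\mathcal{C}^\infty_c$, its Fourier transform is Schwartz, and a Peetre-inequality argument gives $\big|\widehat{\rho\,G_s(x-\cdot)}(\xi)\big|\lesssim\langle\xi\rangle^{-s}$ uniformly in $x$; thus the right-hand side is $\lesssim_T\int_{\mathbb{R}^d}\langle\xi\rangle^{-2(s+\alpha)}\,d\xi$, which is finite exactly because $s>\frac d2-\alpha$. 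To upgrade this pointwise-in-$x$ bound to an $L^{p/2}_x$ bound I would use that $\rho$ is compactly supported: for $|x|$ large, the exponential decay of $G_s$ forces $\|\rho\,G_s(x-\cdot)\|_{L^1}\lesssim e^{-c|x|}$, hence $\big|\widehat{\rho\,G_s(x-\cdot)}(\xi)\big|\lesssim e^{-c|x|}$ for all $\xi$; interpolating this with the uniform $\langle\xi\rangle^{-s}$ bound yields $\mathbb{E}[|\cdots|^2]\lesssim e^{-c'|x|}$ for $|x|$ large (here one uses $2s+2\alpha>d$ strictly, to afford a small loss of decay in $\xi$). Combined with hypercontractivity and Minkowski, this gives $\sup_{n}\sup_{t\in[0,T]}\mathbb{E}\big[\|\rho\<Psi>_n(t)\|_{\mathcal{W}^{-s,p}}^q\big]<\infty$. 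Replacing $\chi_n^2$ by $(\chi_n-\chi_m)^2$, which is supported in $\{|\xi|\gtrsim\min(n,m)\}$, the same computation together with dominated convergence yields $\mathbb{E}\big[\|(\rho\<Psi>_n-\rho\<Psi>_m)(t)\|_{\mathcal{W}^{-s,p}}^q\big]\lesssim\min(n,m)^{-cq}$ uniformly in $t\in[0,T]$.

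Next I would prove H\"older regularity in time with the same machinery. For $t''<t$, writing $\<Psi>_n(t)-\<Psi>_n(t'')$ as a sum of two Wiener integrals whose multipliers have modulus at most $\min\!\big(2,|t-t''||\xi|^2\big)\langle\xi\rangle^{-\alpha}\chi_n(\xi)$ and $\langle\xi\rangle^{-\alpha}\chi_n(\xi)$ respectively, the second-moment computation gives
$$\mathbb{E}\big[\big|\langle\nabla\rangle^{-s}(\rho\<Psi>_n)(t,x)-\langle\nabla\rangle^{-s}(\rho\<Psi>_n)(t'',x)\big|^2\big]\lesssim_T\int_{\mathbb{R}^d}\langle\xi\rangle^{-2(s+\alpha)}\min\!\big(1,|t-t''|^2|\xi|^4\big)\,d\xi+|t-t''|\int_{\mathbb{R}^d}\langle\xi\rangle^{-2(s+\alpha)}\,d\xi.$$
Splitting the first integral at $|\xi|\sim|t-t''|^{-1/2}$ produces a bound $\lesssim|t-t''|^{2\theta}$ with $\theta=\min\!\big(\tfrac12,\tfrac{2s+2\alpha-d}{4}\big)>0$, again carrying an $e^{-c|x|}$ weight for $|x|$ large. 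Hypercontractivity and Minkowski then give $\mathbb{E}\big[\|\rho\<Psi>_n(t)-\rho\<Psi>_n(t'')\|_{\mathcal{W}^{-s,p}}^q\big]\lesssim|t-t''|^{\theta q}$ uniformly in $n$, and likewise for $\rho\<Psi>_n-\rho\<Psi>_m$ with the additional gain $\min(n,m)^{-cq}$.

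Finally I would conclude via the Kolmogorov--Chentsov theorem in the Banach space $\mathcal{W}^{-s,p}(\mathbb{R}^d)$: taking $q$ large enough that $\theta q>1$, the increment bound for the difference $\rho\<Psi>_n-\rho\<Psi>_m$ (which vanishes at $t=0$) yields $\mathbb{E}\big[\|\rho\<Psi>_n-\rho\<Psi>_m\|_{\mathcal{C}([0,T];\mathcal{W}^{-s,p})}^q\big]\lesssim\min(n,m)^{-cq}$, so $(\rho\<Psi>_n)_n$ is Cauchy in $L^q(\Omega;\mathcal{C}([0,T];\mathcal{W}^{-s,p}))$; summing the rate along the dyadic subsequence $n=2^k$ and invoking Borel--Cantelli upgrades convergence in probability to almost-sure convergence of the full sequence, with limit $\rho\<Psi>\in\mathcal{C}([0,T];\mathcal{W}^{-s,p})$. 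I expect the genuinely delicate points to be (i) the passage from the pointwise-in-$x$ second-moment bounds to a global $L^p_x$ bound, where the compact support of $\rho$ and the interpolation between the $L^1_\xi$ and $\langle\xi\rangle^{-s}$ estimates are essential, and (ii) the bookkeeping in the time-increment estimate needed to keep the H\"older exponent strictly positive; the remaining ingredients (Wiener-chaos hypercontractivity, the isometry computation, Kolmogorov's theorem) are standard.
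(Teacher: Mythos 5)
Your proposal is correct and, at its core, follows the same strategy as the paper's proof: an It\^o-isometry/Plancherel computation of second moments, Gaussian hypercontractivity (Lemma~\ref{chaos}) to pass to higher moments, a time-increment estimate yielding a positive H\"older exponent precisely because $s>\frac d2-\alpha$, a quantitative high-frequency tail bound giving the rate in $n$, and a Kolmogorov-type argument to conclude almost sure convergence in $\mathcal{C}([0,T];\mathcal{W}^{-s,p}(\mathbb{R}^d))$. The one genuine difference is how the $x$-integration over the unbounded domain is handled: the paper integrates the $2p$-th moment in $x$ and invokes Lemma~\ref{compact}, whereas you prove a pointwise-in-$x$ second-moment bound via Peetre's inequality and then gain integrability in $x$ from the exponential decay of the Bessel kernel $G_s$ off the compact support of $\rho$, closing with Minkowski's inequality (with $q\ge p$). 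This buys a more elementary, self-contained treatment that avoids Lemma~\ref{compact}, at the price of being tied to $s>0$ (so that $G_s$ is a positive, integrable, exponentially decaying kernel); that covers the rough regime $\alpha<\frac d2$ the paper actually uses, but the proposition as stated also allows $\alpha\ge\frac d2$, where one may take $s\le 0$ and your kernel argument would need a (routine) integration-by-parts substitute, while the paper's Lemma~\ref{compact} treats all $s$ uniformly. Two points of bookkeeping: the rate $\min(n,m)^{-cq}$ comes from the explicit tail estimate $\int_{|\xi|\gtrsim N}\langle\xi\rangle^{-2(s+\alpha)}\,d\xi\lesssim N^{-(2s+2\alpha-d)}$ rather than from dominated convergence, and to keep simultaneously the time-H\"older gain and the $n^{-c}$ gain you must split the available exponent between the two, exactly as the paper does in arriving at the bound $(t-s)^{\varepsilon p}n^{-2\varepsilon p}$; both are immediate from the estimates you already have.
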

The latter proposition sheds light on an important feature. Two regimes have to be distinguished. When $\alpha \geq \frac{d}{2}$, $\<Psi>$ is a function of time with values in a space of functions (up to multiplication by $\rho$) whereas, when $\alpha<\frac{d}{2}$, $\<Psi>$ is a function of time taking values in a negative order Sobolev space. In the first case, for all $0 \leq t \leq T$, $|\<Psi>(t,.)|^2$ makes sense as product of two functions. But, in the second one, we cannot define $|\<Psi>|^2$ as a product of distributions of negative order. In the following, we will only deal with the case where $\alpha<\frac{d}{2}$ that is the hardest one. \\
In order to define $|\<Psi>|^2$, a first idea would be to use a sequential argument and to see $|\<Psi>|^2$ as the limit of the stochastic processes $(|\<Psi>_n|^2)_{n \in \mathbb{N}}$ in a convenient space. But, a quick computation shows that, for all $x \in \mathbb{R}^d$ and $t \in (0,T]$,
$$\mathbb{E}\Big[\big|\<Psi>_n(t,x)\big|^2\Big]=\frac{t}{(2\pi)^d}\int_{|\xi| \leq n}\frac{1}{(1+|\xi|^2)^{\alpha}}d\xi \underset{n \rightarrow + \infty}{\sim}C_{\alpha} t n^{d-2\alpha},$$
that tends to $+ \infty$ as $n$ goes to infinity, preventing us from applying our strategy. In fact, as in \cite{Schaeffer1}, we will proceed with a Wick renormalization and rather consider for every $x \in \mathbb{R}^d$ and $0 \leq t \leq T$,
$$\<Psi2>_n(t,x)=|\<Psi>_n(t,x)|^2-\mathbb{E}\Big[\big|\<Psi>_n(t,x)\big|^2\Big].$$

\begin{remark}
The  value of the renormalization constant $\mathbb{E}\Big[\big|\<Psi>_n(t,x)\big|^2\Big]$ does not depend on $x \in \mathbb{R}^d$, an already underlined point in \cite{Schaeffer1} in the case of the fractional (in time and in space) noise.
\end{remark}

We are now in a position to define our second order stochastic process $\<Psi2>$.
\begin{proposition}\label{cerise1}
Let $d \geq 1$ be a space dimension and $T>0$ a positive time. Fix $\alpha$ and $s$ two positive numbers verifying $$\displaystyle\frac{d}{4}<\alpha<\frac{d}{2}\quad \text{and} \quad s>\frac{d}{2}-\alpha.$$ 
Let $\rho: \mathbb{R}^d \rightarrow \mathbb{R}$ be a $\mathcal{C}^{\infty}$ compactly-supported function and $2 \leq p <\infty$. Then, the sequence $(\rho\<Psi2>_n)_{n \in \mathbb{N}}$ converges almost surely in the space $\mathcal{C}([0,T];\mathcal{W}^{-2s,p}(\mathbb{R}^d))$.\\
Denoting by $\rho \<Psi2>$ the almost sure limit, it holds that $$\rho\<Psi2> \in \mathcal{C}([0,T];\mathcal{W}^{-2s,p}(\mathbb{R}^d)).$$
\end{proposition}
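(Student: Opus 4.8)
The plan is to follow the blueprint of Proposition~\ref{luxo1}, but now working in the \emph{second} homogeneous Wiener chaos generated by $\dot{W}$. Write $K_{2s}$ for the convolution kernel of $\langle\nabla\rangle^{-2s}$, which decays exponentially and behaves like $|x|^{2s-d}$ near the origin (and is bounded if $2s\geq d$), and recall that $\rho$ is supported in some ball $B_R$. Fix $q\geq p$. Every random variable $\langle\nabla\rangle^{-2s}(\rho\<Psi2>_n)(t,x)=\int_{\mathbb{R}^d}K_{2s}(x-y)\,\rho(y)\,\<Psi2>_n(t,y)\,dy$, and likewise every difference built from $\rho(\<Psi2>_n-\<Psi2>_m)$, lies in the chaos of order $2$; hence Minkowski's integral inequality together with Gaussian hypercontractivity reduces the control of $\mathbb{E}\big[\|\rho(\<Psi2>_n-\<Psi2>_m)(t)\|_{\mathcal{W}^{-2s,p}}^{q}\big]$ — and, similarly, of the corresponding quantity for the time increments $\rho\<Psi2>_n(t)-\rho\<Psi2>_n(t')$ — to a deterministic estimate of the second moment
\begin{equation*}
M_n(t,x):=\mathbb{E}\big[\,|\langle\nabla\rangle^{-2s}(\rho\<Psi2>_n)(t,x)|^{2}\,\big]=\iint_{\mathbb{R}^d\times\mathbb{R}^d}K_{2s}(x-y)K_{2s}(x-y')\,\rho(y)\rho(y')\,\mathbb{E}\big[\<Psi2>_n(t,y)\<Psi2>_n(t,y')\big]\,dy\,dy'
\end{equation*}
and of its counterparts for differences and time increments. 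By Wick's theorem, $\mathbb{E}\big[\<Psi2>_n(t,y)\<Psi2>_m(t,y')\big]=|\Gamma_{n\wedge m}(t,y-y')|^{2}+|\widetilde\Gamma_{n\wedge m}(t,y-y')|^{2}$, where $\Gamma_n(t,z)=c_d\,t\int_{|\xi|\leq n}\langle\xi\rangle^{-2\alpha}e^{iz\cdot\xi}\,d\xi$ is the covariance of $\<Psi>_n$ (consistent with the displayed value of $\mathbb{E}\big[|\<Psi>_n(t,x)|^{2}\big]$), and $\widetilde\Gamma_n$ is the pseudo-covariance $\mathbb{E}[\<Psi>_n(t,y)\<Psi>_n(t,y')]$, which carries the oscillatory factor $\int_0^t e^{2i(t-\tau)|\xi|^{2}}\,d\tau$ and therefore two extra powers of decay in $\xi$.

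Thanks to those two extra powers, $|\widetilde\Gamma_{n\wedge m}|^{2}$ has a strictly milder — and, since $\alpha>d/4$, again locally integrable — diagonal behaviour than $|\Gamma_{n\wedge m}|^{2}$, so its contribution to $M_n$ is always dominated by the one coming from $|\Gamma_{n\wedge m}|^{2}$, on which I now focus. The pointwise limit $\Gamma(t,z)$ equals, up to the harmless factor $t$, the Bessel kernel of order $2\alpha$; since $\alpha<d/2$, it behaves like $|z|^{2\alpha-d}$ near the origin, so $|\Gamma(t,z)|^{2}\sim|z|^{4\alpha-2d}$. Two facts then yield the bound. First, $|z|^{4\alpha-2d}$ is locally integrable precisely when $4\alpha-2d>-d$, i.e. when $\alpha>d/4$ — the first use of the hypothesis. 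Second, by the Riesz-type convolution estimate
\begin{equation*}
\int_{\mathbb{R}^{d}}|x-y|^{-a}\,|y-y'|^{-b}\,dy\ \lesssim\ |x-y'|^{\,d-a-b}\qquad(0<a,b<d,\ a+b>d),
\end{equation*}
applied with $a=d-2s$ and $b=2d-4\alpha$ (the configurations $a+b\leq d$ and $2s\geq d$ being even simpler and treated directly), the inner integral in $M_n$ produces a factor $|x-y'|^{2s+4\alpha-2d}$, and one further integration of $|x-y'|^{2s-d}|x-y'|^{2s+4\alpha-2d}=|x-y'|^{4s+4\alpha-3d}$ over a bounded set converges exactly when $4s+4\alpha-3d>-d$, i.e. when $s>\frac{d}{2}-\alpha$ — the second use of the hypothesis. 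Splitting the $x$-integral into $\{|x|\leq 2R\}$, where $M_n(t,x)\lesssim1$, and $\{|x|>2R\}$, where the exponential decay of $K_{2s}$ gives $M_n(t,x)\lesssim e^{-c|x|}$, one concludes $\sup_n\sup_{t\in[0,T]}\int_{\mathbb{R}^d}M_n(t,x)^{p/2}\,dx<\infty$, hence $\sup_n\mathbb{E}\big[\|\rho\<Psi2>_n(t)\|_{\mathcal{W}^{-2s,p}}^{q}\big]<\infty$.

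It remains to upgrade uniform boundedness to almost sure convergence in $\mathcal{C}([0,T];\mathcal{W}^{-2s,p}(\mathbb{R}^d))$. For $n>m$, the kernels $\Gamma_n-\Gamma_m$ and $\widetilde\Gamma_n-\widetilde\Gamma_m$ are supported in frequencies $|\xi|>m$; choosing $s'$ with $\frac{d}{2}-\alpha<s'<s$ and factoring $\langle\nabla\rangle^{-2s}=\langle\nabla\rangle^{-2s'}\langle\nabla\rangle^{-2(s-s')}$, the operator $\langle\nabla\rangle^{-2(s-s')}$ gains a factor $\lesssim m^{-2(s-s')}$ off that frequency region, while the previous paragraph, run with exponent $2s'$ (legitimate since $s'>\frac{d}{2}-\alpha$), absorbs the rest; this gives $\mathbb{E}\big[\|\rho(\<Psi2>_n-\<Psi2>_m)(t)\|_{\mathcal{W}^{-2s,p}}^{q}\big]\lesssim m^{-\delta q}$ for some $\delta>0$, uniformly in $t$. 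For the time regularity, write $\<Psi>_n(t)=\<Psi>_n(t')+[\<Psi>_n(t)-\<Psi>_n(t')]$ and expand $\<Psi2>_n(t,\cdot)-\<Psi2>_n(t',\cdot)$ into Wick monomials; since the frequency-$\xi$ second moment of the increment $\<Psi>_n(t)-\<Psi>_n(t')$ is $\lesssim_T|t-t'|^{2\theta}\langle\xi\rangle^{4\theta-2\alpha}$ for every $\theta\in[0,\frac{1}{2}]$, each monomial gains a power of $|t-t'|$ at the cost of finitely many derivatives, and choosing $\theta>0$ small enough (possible because both hypotheses are strict) the estimates above still close, giving $\mathbb{E}\big[\|\rho\<Psi2>_n(t)-\rho\<Psi2>_n(t')\|_{\mathcal{W}^{-2s,p}}^{q}\big]\lesssim|t-t'|^{\theta q}$, uniformly in $n$. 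Combining these two estimates through a Kolmogorov (Garsia--Rodemich--Rumsey) continuity argument for the $\mathcal{W}^{-2s,p}(\mathbb{R}^d)$-valued processes $t\mapsto\rho(\<Psi2>_{2^{k+1}}-\<Psi2>_{2^{k}})(t)$, with $q$ taken large, shows $\sum_k\mathbb{E}\big[\sup_{t\in[0,T]}\|\rho(\<Psi2>_{2^{k+1}}-\<Psi2>_{2^{k}})(t)\|_{\mathcal{W}^{-2s,p}}^{q}\big]<\infty$; by Borel--Cantelli, $(\rho\<Psi2>_n)_n$ is almost surely Cauchy in $\mathcal{C}([0,T];\mathcal{W}^{-2s,p}(\mathbb{R}^d))$, so it converges there, and its limit belongs to that space.

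I expect the main obstacle to be the kernel estimate of the second paragraph above: checking that the interplay of the local integrability of $|z|^{4\alpha-2d}$ with the two Bessel-kernel convolutions forces exactly $\alpha>d/4$ and $s>\frac{d}{2}-\alpha$, and verifying carefully — beyond the sketch given here — that neither the pseudo-covariance term nor any of the time-increment Wick monomials degrades these two thresholds. The hypercontractivity reduction, the bookkeeping of the Wick contractions, and the Kolmogorov-type passage to $\mathcal{C}([0,T];\cdot)$ are routine and parallel the proof of Proposition~\ref{luxo1}.
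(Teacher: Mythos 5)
Your static moment bound is essentially correct, and it takes a genuinely different route from the paper: where the paper stays entirely on the Fourier side (Wick expansion, hypercontractivity, Lemma~\ref{compact} to integrate out $x$ and absorb $\rho$, then Lemma~\ref{convolution} applied to the weights $(1+|\xi_1+\xi_2|^2)^{-2s}(1+|\xi_1|^2)^{-\alpha}(1+|\xi_2|^2)^{-\alpha}$), you work in physical space, composing the Bessel kernel $K_{2s}$ with the squared covariance kernel $|z|^{4\alpha-2d}$; both roads produce exactly the thresholds $\alpha>\frac d4$ and $s>\frac d2-\alpha$. Two caveats on this part, which you partly flag yourself: the uniform-in-$n$ bound $|\Gamma_n(t,z)|\lesssim t\,|z|^{2\alpha-d}$ for the sharply truncated kernel, and any pointwise control of the pseudo-covariance $\tilde\Gamma_n$, are oscillatory-integral statements that need proof; in particular $\tilde\Gamma_n$ is \emph{not} pointwise dominated by $\Gamma_n$ (the oscillation sits inside the $\xi$-integral). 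For $d\le 3$ the crude bound obtained by putting the modulus inside, $|\tilde\Gamma_n(t,z)|\lesssim \int\langle\xi\rangle^{-2\alpha}\min(t,|\xi|^{-2})\,d\xi\lesssim 1$ (using $2\alpha+2>d$), rescues the "dominated" claim, but for the general $d\ge 1$ of the statement this step requires genuine stationary-phase work. The paper's Fourier-side computation bypasses this entirely, since the time phases are bounded by constants before any physical-space kernel is formed.

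The one step that fails as written is your Cauchy-in-$n$ argument. You claim that, because $\Gamma_n-\Gamma_m$ and $\tilde\Gamma_n-\tilde\Gamma_m$ are supported in frequencies $|\xi|>m$, the operator $\langle\nabla\rangle^{-2(s-s')}$ acting on $\rho(\<Psi2>_n-\<Psi2>_m)$ gains a factor $m^{-2(s-s')}$. But the second-chaos object $\rho(\<Psi2>_n-\<Psi2>_m)$ is not spectrally localized in $\{|\xi|>m\}$: its spatial frequencies are differences $\xi_1-\xi_2$ of first-order frequencies, which can be arbitrarily small even when $|\xi_1|,|\xi_2|>m$ (and multiplication by $\rho$ spreads frequencies further), so the outer Sobolev weight sees no high-frequency localization and yields no power of $m$. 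The correct mechanism — which is what the paper uses, and which fits your framework — is to place the gain on the covariance kernels themselves: in the Wick expansion of $\mathbb{E}\big[(\<Psi2>_n-\<Psi2>_m)(t,y)\overline{(\<Psi2>_n-\<Psi2>_m)(t,y')}\big]$ every term contains at least one difference kernel, whose Fourier support in $\{|\xi|>m\}$ gives $|(\Gamma_n-\Gamma_m)(t,z)|\lesssim t\,m^{-2\delta}|z|^{2(\alpha-\delta)-d}$ for small $\delta>0$; rerunning your Riesz estimates with $\alpha-\delta$ in place of $\alpha$ is legitimate because the hypotheses $\alpha>\frac d4$ and $s>\frac d2-\alpha$ are strict. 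This is the physical-space counterpart of the paper's factor $n^{-2\varepsilon p}$, obtained there by shaving $\varepsilon$ off the exponent $\alpha$ on the region $B_{m,n}$. With that repair, and with your time-increment interpolation (which matches the paper's mean-value-theorem step), the Kolmogorov/Garsia--Rodemich--Rumsey conclusion goes through as you describe.
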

Replacing $|\<Psi>|^2$ by $\<Psi2>$, we rewrite equation (\ref{equation sur v}) under the form
\begin{multline}\label{egalite inter}
v_t=e^{-it\Delta}\phi-i\int_0^t e^{-i(t-\tau)\Delta}(\rho^2 |v_{\tau}|^2)\, d\tau-i\int_0^t e^{-i(t-\tau)\Delta}((\rho \overline{v}_{\tau})\cdot(\rho \<Psi>_{\tau}))\, d\tau\\
-i\int_0^t e^{-i(t-\tau)\Delta}((\rho v_{\tau})\cdot(\overline{\rho \<Psi>_{\tau}}))\, d\tau-i\int_0^t e^{-i(t-\tau)\Delta}(\rho^2 \<Psi2>_{\tau})\, d\tau\, , \quad t\in [0,T] \, .
\end{multline}
We can now come back on the definitions of the products $\rho v \cdot \overline{\rho \<Psi>}$ and $\rho\overline{v} \cdot \rho\<Psi>$. Having a look on equation (\ref{egalite inter}), we see that $v$ is expected to live in $\mathcal{C}([0,T];H^{-2s}(\mathbb{R}^d))$, inheriting the bad regularity of $\rho^2\<Psi2>$. Again, we cannot define a product between two distributions of negative order. In fact, as $\rho: \mathbb{R}^d \rightarrow \mathbb{R}$ is a test function, we can benefit from a local smoothing effect of the Schr\"{o}dinger operator described in Lemma \ref{local-regu}. To be more precise, the latter lemma imposes an additionnal assumption on $\rho$, that is to take $\rho$ of the form
\begin{equation}\label{rho}\tag{$\mathbf{F_\rho}$}
\rho(x_1,\dots,x_d)=\rho_1(x_1)\cdots \rho_d(x_d)
\end{equation}
for smooth compactly-supported functions $\rho_1,\ldots,\rho_d$ on $\R$.
Thanks to this smoothing, up to multiplication by $\rho$, for all $0\leq t \leq T$, $v(t,.)$ lives in $\mathcal{W}^{-2s+\eta,p}(\mathbb{R}^d)$ for small $\eta>0$ such that $-2s+\eta>0$ and where $p \geq 2$. Applying the classical rule (see Lemma \ref{product}) that states that we can define the product of two distributions of Sobolev regularities $-s<0$ and $\beta>0$ as a distribution of Sobolev regularity $-s$ as soon as $\beta>s$, we see that the two terms $\rho v \cdot \overline{\rho\<Psi>}$ and $\rho\overline{v} \cdot \rho\<Psi>$ make sense. And, thanks to the local smoothing again, the quadratic term $\rho^2 |v|^2$ can be interpreted as a product of two functions. Finally, each term in the expression of (\ref{egalite inter}) is well-defined. We can now propose our interpretation of the model $(\ref{dim-d-sch})$.

\subsection{Interpretation and local well-posedness of equation \eqref{dim-d-sch}}\label{pointfixe1}

\begin{definition}\label{sol-sch1}
Let $d \geq 1$ be a space dimension and $T>0$ a positive time. Fix $\alpha$ and $s$ two real numbers verifying $$\displaystyle\frac{d}{4}<\alpha<\frac{d}{2}\quad \text{and} \quad s>\frac{d}{2}-\alpha.$$ 
A stochastic process $(u(t,x))_{t \in [0,T], x \in \mathbb{R}^d}$ is said to be a Wick solution on $[0,T]$ of equation~\eqref{dim-d-sch} if, almost surely, the process $v:=u-\<Psi>$ is a solution of the mild equation
\begin{multline*}
v_t=e^{-it\Delta}\phi-i\int_0^t e^{-i(t-\tau)\Delta}(\rho^2 |v_{\tau}|^2)\, d\tau-i\int_0^t e^{-i(t-\tau)\Delta}((\rho \overline{v}_{\tau})\cdot(\rho \<Psi>_{\tau}))\, d\tau\\
-i\int_0^t e^{-i(t-\tau)\Delta}((\rho v_{\tau})\cdot(\overline{\rho \<Psi>_{\tau}}))\, d\tau-i\int_0^t e^{-i(t-\tau)\Delta}(\rho^2 \<Psi2>_{\tau})\, d\tau\, , \quad t\in [0,T] \, .
\end{multline*}
\end{definition}

Let us now present our first result. As in \cite{Schaeffer1}, the key point is the introduction of the local Sobolev space below
$$H^{-2s+\eta}_\rho(\R^d):=\{ v \in \mathcal{S}^{'}(\mathbb{R}^d);\ \|\rho \cdot (\id-\Delta)^{\frac{-2s+\eta}{2}}(v)\|_{L^2(\mathbb{R}^d)}<\infty \},$$
for $s \in \mathbb{R}$ and $\eta>0$, that permits us to benefit from the local smoothing of the Schr\"{o}dinger operator. We only present our result under the form of a proposition because we want to insist on the fact that the main novelty of this paper lies in Theorem \ref{major} resulting from the multilinear smoothing developed in the next subsection. 

\begin{proposition}
Let $1\leq d\leq 3$ be a space dimension and $\rho: \mathbb{R}^d \rightarrow \mathbb{R}$ be a $\mathcal{C}^{\infty}$ compactly-supported function of the form \eqref{rho}. Besides, assume that 
\begin{equation*}
\alpha_d<\alpha<\frac{d}{2} \, , \quad \text{where} \ \ \alpha_d=
\left\{
\begin{array}{l}
7/20 \quad \text{if} \ d=1\\
18/20 \quad \text{if} \ d=2\\
29/20 \quad \text{if} \ d=3 
\end{array}
\right. \, .
\end{equation*}
Fix $s>0$ such that $\frac{d}{2}-\alpha<s<s_d$, where
\begin{equation*}
s_d=
\left\{
\begin{array}{l}
3/20 \quad \text{if} \ d=1\\
1/10 \quad \text{if} \ d=2\\
1/24 \quad \text{if} \ d=3 
\end{array}
\right. \, .
\end{equation*}
\noindent
Then the following assertions hold true:

\smallskip

\noindent
$(i)$ One can find parameters $\eta\in [2s,1/2]$ and $p,q\geq 2$ such that, almost surely, for every $\phi \in H^{-2s}(\mathbb{R}^d)$, there exists a time $T_0>0$ for which equation~\eqref{dim-d-sch} admits a unique Wick solution $u$ (in the sense of Definition~\ref{sol-sch1}) in the set
$$\mathcal{S}_{T_0}:= \<Psi> + X^{s,\eta,(p,q)}_\rho(T_0)\, ,$$
where
$$X^{s,\eta,(p,q)}_\rho(T):=\mathcal{C}([0,T]; H^{-2s}(\mathbb{R}^d))\cap L^p([0,T]; \cw^{-2s,q}(\mathbb{R}^d))\cap L^{\frac{1}{\eta}}_T H^{-2s+\eta}_\rho \, .$$

\smallskip

\noindent
$(ii)$ For every $n\geq 1$, let $\widetilde{u}_n$ denote the smooth Wick solution of~\eqref{dim-d-sch}, that is $\widetilde{u}_n$ is the solution (in the sense of Definition~\ref{sol-sch1}) associated with the pair $(\rho \<Psi>_n,\rho^2\<Psi2>_n)$. For all $\mathcal{C}^{\infty}$ compactly-supported functions $\chi: \mathbb{R}^d \rightarrow \mathbb{R}$ , the sequence $(\chi\widetilde{u}_{n})_{n\geq 1}$ converges almost surely in $\mathcal{C}([0,T_0];H^{-2s}(\mathbb{R}^d))$ to $\chi u$, where $u$ is the Wick solution exhibited in item $(i)$.
\end{proposition}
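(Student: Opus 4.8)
The plan is to run a Banach fixed-point argument for the Duhamel map attached to \eqref{egalite inter}, in a closed ball of $X^{s,\eta,(p,q)}_\rho(T_0)$ for $T_0>0$ small, the real work being the choice of the parameters $(s,\eta,p,q)$. Concretely, for $v\in X^{s,\eta,(p,q)}_\rho(T)$ I would set
\begin{multline*}
\Gamma(v)_t:=e^{-it\Delta}\phi-i\int_0^t e^{-i(t-\tau)\Delta}\big(\rho^2|v_\tau|^2\big)\,d\tau-i\int_0^t e^{-i(t-\tau)\Delta}\big((\rho\overline{v}_\tau)\cdot(\rho\<Psi>_\tau)\big)\,d\tau\\
-i\int_0^t e^{-i(t-\tau)\Delta}\big((\rho v_\tau)\cdot(\overline{\rho\<Psi>_\tau})\big)\,d\tau-i\int_0^t e^{-i(t-\tau)\Delta}\big(\rho^2\<Psi2>_\tau\big)\,d\tau\,,
\end{multline*}
so that a Wick solution on $[0,T_0]$ in $\mathcal{S}_{T_0}$ is precisely a fixed point of $\Gamma$ in $X^{s,\eta,(p,q)}_\rho(T_0)$, and $(i)$ amounts to showing that, for a good choice of $(s,\eta,p,q)$ and $T_0$ small, $\Gamma$ is a contraction on a ball of that space.

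Next I would estimate $\Gamma(v)$ term by term in the three norms defining $X^{s,\eta,(p,q)}_\rho$. The free term $e^{-it\Delta}\phi$ is controlled in $\mathcal{C}([0,T];H^{-2s})$ by unitarity, in $L^p([0,T];\cw^{-2s,q})$ by the Strichartz inequalities (with the usual Sobolev correction when $(p,q)$ is non-admissible), and in $L^{1/\eta}_TH^{-2s+\eta}_\rho$ by the deterministic local smoothing of Lemma \ref{local-regu}, which is where $\eta\le 1/2$ enters. The purely stochastic source $\rho^2\<Psi2>$ is, by Proposition \ref{cerise1}, almost surely in $\mathcal{C}([0,T];\cw^{-2s,p})$, and its Duhamel integral is put into all three components of the norm by the same tools, picking up a positive power $T_0^{\theta}$ from H\"older in time; this term is what pins the regularity of $v$ at $-2s$. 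For the two cross terms, the local-smoothing component of the $X$-norm makes $\rho v_\tau$ locally a genuine function, of regularity $-2s+\eta>0$ (using $2s\le\eta$), while $\rho\<Psi>_\tau\in\cw^{-s,p}$ by Proposition \ref{luxo1}; Lemma \ref{product} then defines the product as a distribution of regularity $-s$ provided $\eta$ is large enough relative to $s$, and the Duhamel integral of it is estimated, again with a gain $T_0^{\theta}$, using Strichartz and Lemma \ref{local-regu}. The self-interaction $\rho^2|v_\tau|^2=|\rho v_\tau|^2$ is handled the same way: since one is free to take $p$ large, $\cw^{-2s+\eta,p}$ behaves like a multiplicative algebra up to lowering the integrability exponent, so $|\rho v_\tau|^2$ has positive Sobolev regularity, and its Duhamel integral is absorbed into the $X$-norm with a power of $T_0$. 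Running the same computations on $\Gamma(v)-\Gamma(v')$, which is bilinear in $v-v'$ on the quadratic pieces and linear on the cross terms, gives the contraction for $T_0$ small and hence the unique fixed point.

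The hard part will be the simultaneous compatibility of the constraints on $(s,\eta,p,q)$: one needs $2s\le\eta\le 1/2$ so that $\rho v$ is locally a function and Lemma \ref{local-regu} applies; one needs $(p,q)$ and the attached Sobolev exponents compatible with the Strichartz inequalities on $\R^d$; one needs $\eta$ large enough in terms of $s$ for Lemma \ref{product} to close the cross terms and for the Sobolev multiplication to close $|\rho v|^2$ in dimension $d\le 3$; and one needs every time-H\"older exponent to leave a strictly positive power of $T_0$ so the contraction survives. The quadratic self-interaction $\rho^2|v|^2$ is the most delicate to fit into this scheme. Threading all these inequalities through the admissibility relations, separately for $d=1,2,3$, is exactly what produces the stated windows $\alpha_d<\alpha<\tfrac{d}{2}$ and $\tfrac{d}{2}-\alpha<s<s_d$; the lower bound $s>\tfrac{d}{2}-\alpha$ is imposed by Propositions \ref{luxo1} and \ref{cerise1}.

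Finally, for $(ii)$ I would rerun the identical fixed-point argument with the smooth pair $(\rho\<Psi>_n,\rho^2\<Psi2>_n)$ in place of $(\rho\<Psi>,\rho^2\<Psi2>)$. By Propositions \ref{luxo1} and \ref{cerise1} these sequences converge almost surely in $\mathcal{C}([0,T];\cw^{-s,p})$ and $\mathcal{C}([0,T];\cw^{-2s,p})$, hence are almost surely bounded uniformly in $n$, so the contraction time $T_0$ can be taken independent of $n$; moreover the map from the stochastic data to the fixed point $v_n$ is Lipschitz, by the same bilinear estimates, so $v_n\to v$ in $X^{s,\eta,(p,q)}_\rho(T_0)$, in particular in $\mathcal{C}([0,T_0];H^{-2s})$. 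Writing $\widetilde{u}_n=v_n+\<Psi>_n$ and $u=v+\<Psi>$ and multiplying by a cutoff $\chi$ — needed because $\<Psi>$ lives only locally in $\cw^{-s,p}$, whereas $\chi\<Psi>_n\to\chi\<Psi>$ in $\mathcal{C}([0,T];\cw^{-s,p})\hookrightarrow\mathcal{C}([0,T];H^{-2s})$ — one obtains $\chi\widetilde{u}_n\to\chi u$ in $\mathcal{C}([0,T_0];H^{-2s})$.
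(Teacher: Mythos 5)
Your plan coincides with the paper's own treatment of this proposition: the paper proves it only by observing that, once $\<Psi>$ and $\<Psi2>$ are constructed (Propositions \ref{luxo1} and \ref{cerise1}), the statement follows from a deterministic Banach fixed-point argument for the Duhamel map in $X^{s,\eta,(p,q)}_\rho(T_0)$, run with Strichartz estimates, the local smoothing of Lemma \ref{local-regu} and the product/Leibniz lemmas, exactly as in \cite{Schaeffer1} --- which is precisely the scheme you describe, including the treatment of item $(ii)$ by Lipschitz dependence on the data $(\rho\<Psi>_n,\rho^2\<Psi2>_n)$. Like the paper, you defer the parameter bookkeeping that yields the specific values of $\alpha_d$ and $s_d$ to the \cite{Schaeffer1}-type computations, so your proposal is correct and follows essentially the same route at the same level of detail.
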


This proposition can be proved with analogous arguments as those developed in \cite{Schaeffer1}. Precisely, the stochastic processes $\<Psi>$ and $\<Psi2>$ being constructed, it results from a deterministic fixed-point argument.

\begin{remark}
In \cite{Schaeffer1}, the authors have studied a similar model, the noise $\langle \nabla \rangle ^{-\alpha}\dot W$ being replaced by the derivative of a fractional (in time and in space) Brownian field $B$. They have not constructed the second order stochastic process $\<Psi2>$ when $B$ is a white noise, that is when $H_0=H_1=\dots=H_d=\frac{1}{2}$. Proposition \ref{cerise1} shows that this construction can be realized under a hypothesis of spatial regularization, namely when $\frac{d}{4}<\alpha< \frac{d}{2}$. In fact, we suspect that this condition on $\alpha$ is optimal and that $\<Psi2>$ could not be defined as a function of time taking values in a space of distributions in the case of the white noise, that is when $\alpha=0$. 
\end{remark}

We can observe that we are able to solve equation (\ref{dim-d-sch}) only for a small range of $\alpha$. This comes from the difficulty to bound $|v|^2$. Let us now follow a different strategy. We have seen that $\rho^2\<Psi2> \in \mathcal{C}([0,T]; H^{-2s}(\mathbb{R}^d))$ and that, consequently, $v$ inherits its bad regularity. That is why we propose a stochastic construction of $\<IPsi2>(t,.)=\displaystyle-i\int_{0}^{t}e^{-i(t-\tau)\Delta}(\<Psi2>(\tau,.))d\tau$ and hope for a better regularity. Let us develop this point in the next subsections.

\subsection{Multilinear smoothing}

In order to construct the convolution between the Schr\"{o}dinger operator and the second order stochastic process $\<Psi2>$, we consider the following sequence:
\begin{definition}\label{cerisessj}
For every $t \in [0,T]$,
$$\<IPsi2>_n(t,.)=-i\int_{0}^{t}e^{-i(t-\tau)\Delta}(\<Psi2>_n(\tau,.))d\tau.$$
\end{definition}
As usual, the strategy is to show that the latter sequence is a Cauchy sequence in a convenient subspace. Our next result reads as follows.
\begin{proposition}\label{cerise-ssj}
Let $1 \leq d \leq 3$ be a space dimension and $T>0$ a positive time. Fix $\alpha$ and $s$ two real numbers verifying $$\displaystyle\frac{d}{4}<\alpha<\frac{d}{2}\quad \text{and} \quad s>\frac{d}{2}-\alpha.$$
Assume that\begin{equation*}\label{kappa}
\kappa=
\left\{
\begin{array}{l}
1-\alpha \quad \text{if} \ d=1\\
\displaystyle\frac{3}{2}-\alpha \quad \text{if} \ d=2\\
2-\alpha \quad \text{if} \ d=3 \ \text{and} \ \alpha\geq 1 \\
1 \quad \text{if} \ d=3 \ \text{and} \ \alpha< 1 \  .
\end{array}
\right.
\end{equation*}
Let $\rho: \mathbb{R}^d \rightarrow \mathbb{R}$ be a $\mathcal{C}^{\infty}$ compactly-supported function and $2 \leq p <\infty$. Then, the sequence $\Big(\rho\<IPsi2>_n\Big)_{n \in \mathbb{N}}$ converges almost surely in the space $\mathcal{C}([0,T];\mathcal{W}^{-2s+\kappa,p}(\mathbb{R}^d))$.
Denoting by $\rho\<IPsi2>$ the almost sure limit, it holds that
$$\rho\<IPsi2> \in \mathcal{C}([0,T];\mathcal{W}^{-2s+\kappa,p}(\mathbb{R}^d)).$$
\end{proposition}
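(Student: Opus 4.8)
The plan is to follow the by-now-standard Da Prato--Debussche / Wiener chaos scheme: establish a deterministic bound on finite $p$-th moments of $\rho\<IPsi2>_n(t,\cdot)-\rho\<IPsi2>_m(t,\cdot)$ in $\mathcal{W}^{-2s+\kappa,p}$, together with a Hölder-in-time estimate, and then conclude convergence in $\mathcal{C}([0,T];\mathcal{W}^{-2s+\kappa,p})$ by Kolmogorov's continuity criterion (as in Propositions~\ref{luxo1} and~\ref{cerise1}). Since $\<IPsi2>_n$ lives in the (inhomogeneous) second Wiener chaos generated by $\dot W$, Gaussian hypercontractivity reduces the task to an $L^2(\Omega)$ estimate: it suffices to prove, for some $\sigma>0$, a bound of the form $\mathbb{E}\big[\|\rho\<IPsi2>_n(t,\cdot)-\rho\<IPsi2>_m(t,\cdot)\|_{\mathcal{W}^{-2s+\kappa,p}}^2\big]\lesssim (n\wedge m)^{-\sigma}$ (and a corresponding $|t-t'|^{\sigma'}$ bound for the time increments), which upgrades to arbitrary finite $p$ on $\Omega$ at the cost of replacing $p$ by a larger exponent — this is the reason the statement is for all $2\le p<\infty$. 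By definition $\<IPsi2>_n(t,\cdot)=-i\int_0^t e^{-i(t-\tau)\Delta}\<Psi2>_n(\tau,\cdot)\,d\tau$, and $\<Psi2>_n(\tau,x)$ is the Wick square of the Gaussian $\<Psi>_n(\tau,x)$, whose covariance is explicitly computable from the heat-/Schrödinger-kernel; so one gets a closed-form expression for the space-time covariance kernel of $\<Psi2>_n$, and hence, after applying the Duhamel operator twice and taking a Fourier transform, an explicit oscillatory-integral expression for $\mathbb{E}[\widehat{\<IPsi2>_n}(t,\xi)\,\overline{\widehat{\<IPsi2>_n}(t,\xi')}]$.

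The heart of the matter is thus to extract the gain $\kappa$ derivatives from this explicit expression. After reducing (via Wick's theorem for the fourth moment of a Gaussian) to a convolution-type integral, the kernel will take the schematic shape
\begin{equation*}
\int_0^t\!\!\int_0^t d\tau\,d\tau'\int_{\R^d}\!\frac{d\eta}{\langle\eta\rangle^{2\alpha}\langle\xi-\eta\rangle^{2\alpha}}\,e^{i\Phi(\tau,\tau',\xi,\eta)}\cdot(\text{cutoffs }|\eta|,|\xi-\eta|\le n\wedge m),
\end{equation*}
with a phase $\Phi$ linear in $\tau,\tau'$ coming from the three propagators $e^{-i(t-\tau)\Delta}$, $e^{-i\tau\Delta}$ (twice, from the two copies of $\<Psi>_n$ inside $\<Psi2>_n$). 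The plan is: (a) perform the $\tau,\tau'$ integrations first, which produces a factor like $1/|{|\xi|^2-|\eta|^2-|\xi-\eta|^2}|$ (i.e. $1/|{-2\eta\cdot(\xi-\eta)}|$ after simplifying $|\xi|^2-|\eta|^2-|\xi-\eta|^2$), so one gains decay off a quadratic hypersurface — this is the multilinear / dispersive input; (b) insert $\langle\xi\rangle^{2(-2s+\kappa)}$ and integrate in $\xi$, then in $\eta$, over the region where the dispersive denominator is large, using $\int\frac{d\eta}{\langle\eta\rangle^{2\alpha}\langle\xi-\eta\rangle^{2\alpha}}\lesssim\langle\xi\rangle^{d-4\alpha}$ type bounds (valid since $\alpha<d/2<d$, with care near $\alpha$ close to $d/4$ where the two factors just barely integrate); (c) handle the resonant region $|{-2\eta\cdot(\xi-\eta)}|\lesssim 1$ separately, where one cannot integrate by parts in $\tau,\tau'$ but the measure of the region is small and contributes only a harmless power of $n\wedge m$. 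The precise value of $\kappa$ in each dimension is exactly what falls out of optimizing this two-step integration (counting how many powers of $\langle\xi\rangle$ survive): the split in $d=3$ according to whether $\alpha\gtrless 1$ reflects whether the $\xi$-integral is limited by the dispersive gain ($\kappa=2-\alpha$) or saturates at the trivial bound $\kappa=1$ coming from the single Duhamel integration in time. The multiplication by $\rho$ is, as before, only used through $\rho\in\mathcal{W}^{N,\infty}$ for all $N$, giving a localizing smoothing in physical space that makes all the $L^p$ (rather than $L^2$) Sobolev statements legitimate via Hausdorff--Young / Bernstein after the $L^2(\Omega)$ Fourier estimate; here, unlike in the local-smoothing lemma used later, no product structure \eqref{rho} on $\rho$ is needed.

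Finally, the time-regularity estimate needed for Kolmogorov follows the same computation with $\<IPsi2>_n(t)-\<IPsi2>_n(t')$ in place of the $n,m$-difference: writing the difference as $(e^{-i(t-t')\Delta}-\mathrm{Id})\<IPsi2>_n(t')$ plus $-i\int_{t'}^t e^{-i(t-\tau)\Delta}\<Psi2>_n(\tau)\,d\tau$, one trades a bit of the spatial gain $\kappa$ for Hölder regularity in $t$ in the first term (using $|e^{i\theta}-1|\lesssim|\theta|^{\beta}$) and bounds the second by a direct $|t-t'|$ estimate, exactly as in the proofs of Propositions~\ref{luxo1}--\ref{cerise1}. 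Assembling the moment bounds and invoking Kolmogorov gives almost-sure convergence of $(\rho\<IPsi2>_n)_n$ in $\mathcal{C}([0,T];\mathcal{W}^{-2s+\kappa,p}(\R^d))$, and the limit inherits membership in that space; the Cauchy estimates also show the limit is independent of the approximating sequence. I expect step (b)--(c) above — the careful case analysis of the $(\xi,\eta)$-integral around the resonant hypersurface, and in particular verifying that the claimed $\kappa$ is actually attained and not beaten down by the near-endpoint behaviour at $\alpha\to(d/4)^+$ — to be the main technical obstacle; everything else is a routine adaptation of the constructions already carried out for $\<Psi>$ and $\<Psi2>$.
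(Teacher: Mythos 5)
Your overall strategy is the paper's: reduce to moment bounds in the second Wiener chaos via Wick's formula and hypercontractivity (Lemmas~\ref{wick formula} and~\ref{chaos}), localize by $\rho$ through Lemma~\ref{compact}, compute the covariance of $\<Psi2>_n$ explicitly, integrate the Duhamel phases in time to produce a factor $(1+|\text{resonance}|)^{-1}$, estimate the resulting frequency integral, and conclude with a Kolmogorov-type argument; your treatment of the time increments is also the one used there. So the route is right, but the decisive step --- the one that actually yields the stated values of $\kappa$ --- is only gestured at, and the way you describe it would not work as stated.

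Two concrete points. First, the resonance you write, $|\xi|^2-|\eta|^2-|\xi-\eta|^2=2\eta\cdot(\xi-\eta)$, is the one for $\<Psi>\cdot\<Psi>$; since $\<Psi2>_n$ is the renormalized $|\<Psi>_n|^2$ (one factor conjugated), Wick's formula produces the two kernels $\big|\mathbb{E}[\<Psi>_n\overline{\<Psi>_n}]\big|^2$ and $\big|\mathbb{E}[\<Psi>_n\<Psi>_n]\big|^2$, and the resonances that actually appear are $\kappa(\underline{\xi})=|\xi_1|^2-|\xi_1-\xi_2|^2+|\xi_2|^2=2|\xi_1||\xi_2|\cos\theta$ (with $\theta=\angle(\xi_1,\xi_2)$) and $\kappa_2(\underline{\xi})=|\xi_1|^2+|\xi_1-\xi_2|^2-|\xi_2|^2$, the second reducing to the first after $\xi_2\mapsto\xi_1-\xi_2$; their zero set is where the \emph{output} frequency is nearly orthogonal to an \emph{input} frequency, and it is exactly this geometry that must be exploited. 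Second, ``the resonant region has small measure and contributes a harmless power of $n\wedge m$'' is where the whole proposition lives: without the factor $(1+|\kappa(\underline{\xi})|)^{-1}$, Lemma~\ref{convolution} gives convergence of the frequency integral only for $\kappa$ arbitrarily close to $0$ when $s$ is close to $\frac{d}{2}-\alpha$, so \emph{all} of the gain must be extracted, uniformly in $n$, from a quantitative analysis near the resonant set (the $n^{-\sigma}$ decay comes from restricting to the annulus $B_m\setminus B_n$, not from that region's measure). The paper does this by splitting off $\{|\xi_1-\xi_2|<|\xi_2|\}$, where $|\kappa(\underline{\xi})|\geq|\xi_1|^2$, and on the complement performing a double dyadic decomposition in $|\xi_2|\sim N_2$ and $|\cos\theta|\sim 2^{-k}$, with sector-volume bounds of order $N_2^{d}2^{-(d-1)k}$; summing these pieces is what forces the dimension-dependent $\kappa$, and the $d=3$ dichotomy $\alpha\gtrless 1$ arises because the dyadic sum over $1\leq N_2<|\xi_1|$ is logarithmic for $\alpha\geq 1$ but dominated by its top scale for $\alpha<1$ --- not from a ``trivial bound coming from the single Duhamel integration in time.'' Until you carry out this case analysis (which you yourself flag as the main obstacle), the claim that the stated $\kappa$ is attained is unsupported.
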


\begin{remark}\label{gain}
Suppose that the assumptions of the latter proposition are verified.\\
$\bullet$ When $d=1$, for all $0\leq t \leq T$, $\rho\<IPsi2>(t,.) \in H^{\alpha-\varepsilon}(\mathbb{R})$ for every $\varepsilon >0$. In particular, if $\varepsilon$ is small enough, $\rho\<IPsi2>$ is a function of time with values in a space of functions. Thanks to this multilinear smoothing, we will be able to settle a fixed-point argument for all the range of $\alpha$, namely when $\frac{1}{4}<\alpha<\frac{1}{2}$.\\
$\bullet$ When $d=2$, for all $0\leq t \leq T$, $\rho\<IPsi2>(t,.) \in H^{\alpha-\frac{1}{2}-\varepsilon}(\mathbb{R}^2)$ for every $\varepsilon >0$. In particular, if $\varepsilon$ is small enough, $\rho\<IPsi2>$ is again a function of time with values in a space of functions. This time, we will be able to settle a fixed-point argument only when $\frac{5}{6}<\alpha<1$. As in \cite{Schaeffer1}, this constraint will result from the deterministic part of our study.\\
$\bullet$ When $d=3$ and $\alpha>1$, for all $0\leq t \leq T$, $\rho\<IPsi2>(t,.) \in H^{\alpha-1-\varepsilon}(\mathbb{R}^3)$ for every $\varepsilon >0$. In particular, if $\varepsilon$ is small enough, $\rho\<IPsi2>$ is a function of time with values in a space of functions but, when $\alpha \leq 1$, despite the multilinear smoothing, $\rho\<IPsi2>$ is a function of time with values in a negativ order Sobolev space. Here, as in dimension $2$, the deterministic part of our study will impose a more restrictive condition on $\alpha$, namely $\frac{17}{12}<\alpha<\frac{3}{2}$.
\end{remark}

\begin{remark}
The multilinear smoothing defined by the real $\kappa>0$ of Proposition \ref{cerise-ssj} is better than the local smoothing of Lemma \ref{local-regu} that only provides a gain of a half of a derivative (understood in the Sobolev meaning). One can observe that the smoothing effect is a decreasing function of $\alpha$. In other words, the more the noise is regular, the less the extra smoothing is strong. This a common point shared with the waves setting. Indeed, when $d=2$, Oh and Okamoto \cite{Oh} proved a gain of $\min(\alpha,\frac{1}{4})$ of a derivative.
\end{remark}

\subsection{A deformed version of equation \eqref{dim-d-sch}}

Let us go back to equation (\ref{egalite inter}). Replacing $\displaystyle-i\int_0^t e^{-i(t-\tau)\Delta}(\rho^2\<Psi2>_{\tau})\, d\tau$ by $\rho^2\<IPsi2>$, we obtain
\begin{multline}\label{egalite inter3}
v_t=e^{-it\Delta}\phi-i\int_0^t e^{-i(t-\tau)\Delta}( \rho^2|v_{\tau}|^2)\, d\tau-i\int_0^t e^{-i(t-\tau)\Delta}( (\rho\overline{v}_{\tau})\cdot (\rho\<Psi>_{\tau}))\, d\tau\\
-i\int_0^t e^{-i(t-\tau)\Delta}( (\rho v_{\tau})\cdot(\overline{\rho \<Psi>_{\tau}}))\, d\tau+\rho^2\<IPsi2>\, , \quad t\in [0,T] \, .
\end{multline}
We underline the fact that replacing $\displaystyle-i\int_0^t e^{-i(t-\tau)\Delta}(\rho^2\<Psi2>_{\tau})\, d\tau$ by $\rho^2\<IPsi2>$ in (\ref{egalite inter}) changes the model (\ref{dim-d-sch}) since we have extracted the test function $\rho^2$ from the integral. Precisely, the previous equality corresponds to the following deformed version of (\ref{dim-d-sch}):
\begin{equation}\label{deformed}
\left\{
\begin{array}{l}
i\partial_t u-\Delta u= \rho^2 |u|^2 + C_{\rho}+\langle \nabla \rangle^{-\alpha}\dot{W} \, , \quad  t\in [0,T] \, , \, x\in \R^d \, ,\\
u_0=\phi\, ,
\end{array}
\right.
\end{equation}
where $C_{\rho}$ is a renormalization \emph{variable} whose expression is given by $$C_{\rho}=(i\partial_t-\Delta)\Big(\rho^2 \<IPsi2>\Big)-\rho^2 \<Psi2>-\rho^2 \mathbb{E}[|\<Psi>|^2].$$
Now we define what we will call a solution of equation (\ref{deformed}):

\begin{definition}\label{sol-sch2}
Let $d \geq 1$ be a space dimension and $T>0$ a positive time. Fix $\alpha$ and $s$ two real numbers verifying $$\displaystyle\frac{d}{4}<\alpha<\frac{d}{2}\quad \text{and} \quad s>\frac{d}{2}-\alpha.$$ 
A stochastic process $(u(t,x))_{t \in [0,T], x \in \mathbb{R}^d}$ is said to be a solution on $[0,T]$ of equation (\ref{deformed}) if, almost surely, the process $v:=u-\<Psi>$ is a solution of the mild equation
\begin{multline}\label{interpretation}
v_t=e^{-it\Delta}\phi-i\int_0^t e^{-i(t-\tau)\Delta}(\rho^2 |v_{\tau}|^2)\, d\tau-i\int_0^t e^{-i(t-\tau)\Delta}((\rho \overline{v}_{\tau})\cdot(\rho \<Psi>_{\tau}))\, d\tau\\
-i\int_0^t e^{-i(t-\tau)\Delta}((\rho v_{\tau})\cdot(\overline{\rho \<Psi>_{\tau}}))\, d\tau+\rho^2\<IPsi2> , \quad t\in [0,T] \, ,
\end{multline}
where $\rho: \mathbb{R}^d \rightarrow \mathbb{R}$ is a $\mathcal{C}^{\infty}$ compactly-supported function.
\end{definition}

Let us study the regularity of each term in equation (\ref{interpretation}). Suppose that $\rho^2\<IPsi2>$ is a function of time with values in a space of functions (that is possible for some range of $\alpha$ according to the multilinear smoothing of Proposition \ref{cerise-ssj}). Then, the term with the worst regularity is $\rho\<Psi>$ and $v$ is expected to live in $\mathcal{C}([0,T];H^{-s}(\mathbb{R}^d))$. Again, there is an issue in making sense of $\rho v \cdot \overline{\rho\<Psi>}$, $\rho\overline{v} \cdot \rho\<Psi>$ and $\rho^2 |v|^2$. In fact, resorting to the local smoothing effect of the Schr\"{o}dinger operator, up to multiplication by $\rho$, for every $0 \leq t \leq T$, $v(t,.)$ lives in $\mathcal{W}^{-s+\eta,p}(\mathbb{R}^d)$ for small $\eta>0$ such that $-s+\eta>0$ and where $p \geq 2$. Consequently, using the same arguments as in Subsection \ref{pointfixe1}, the quadratic term $\rho^2 |v|^2$ can be interpreted as a product of two functions and, under the additional assumption $-s+\eta>s$ allowing us to define $\rho v \cdot \overline{\rho\<Psi>}$ and $\rho\overline{v} \cdot \rho\<Psi>$ as two functions of time with values in a Sobolev space of order $-s$, we see that each term in equation (\ref{interpretation}) is well-defined.\\
We can now state our main local well-posedness result:

\begin{theorem}\label{major}
Let $1\leq d\leq 3$ be a space dimension and $\rho: \mathbb{R}^d \rightarrow \mathbb{R}$ be a $\mathcal{C}^{\infty}$ compactly-supported function of the form \eqref{rho}. Besides, assume that 
\begin{equation*}
\alpha_d<\alpha<\frac{d}{2} \, , \quad \text{where} \ \ \alpha_d=
\left\{
\begin{array}{l}
1/4 \quad \text{if} \ d=1\\
5/6 \quad \text{if} \ d=2\\
17/12 \quad \text{if} \ d=3 
\end{array}
\right. \, .
\end{equation*}

\noindent
Then the following assertions hold true:

\smallskip

\noindent
$(i)$ There exists $\varepsilon>0$ small enough such that, denoting by $s>0$ the real number $s=\frac{d}{2}-\alpha + \varepsilon$, one can find parameters $\eta\in [s,1/2]$ and $p,q\geq 2$ such that, almost surely, for every $\phi \in~H^{-s}(\mathbb{R}^d)$, there exists a time $T_0>0$ for which equation (\ref{deformed}) admits a unique solution $u$ (in the sense of Definition~\ref{sol-sch2}) in the set
$$\mathcal{S}_{T_0}:= \<Psi> + Y^{s,\eta,(p,q)}_\rho(T_0)\, ,$$
where
$$Y^{s,\eta,(p,q)}_\rho(T):=\mathcal{C}([0,T]; H^{-s}(\mathbb{R}^d))\cap L^p([0,T]; \cw^{-s,q}(\mathbb{R}^d))\cap L^{\frac{1}{\eta}}_T H^{-s+\eta}_\rho \, .$$

\smallskip

\noindent
$(ii)$ For every $n\geq 1$, let $\widetilde{u}_n$ denote the smooth solution of (\ref{deformed}), that is $\widetilde{u}_n$ is the solution (in the sense of Definition~\ref{sol-sch2}) associated with $(\rho \<Psi>_n,\rho^2\<IPsi2>_n)$. For all $\mathcal{C}^{\infty}$ compactly-supported functions $\chi: \mathbb{R}^d \rightarrow \mathbb{R}$ , the sequence $(\chi\widetilde{u}_{n})_{n\geq 1}$ converges almost surely in $\mathcal{C}([0,T_0];H^{-s}(\mathbb{R}^d))$ to $\chi u$, where $u$ is the solution exhibited in item $(i)$.
\end{theorem}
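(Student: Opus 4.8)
The plan is to derive both items from a single deterministic contraction argument, the randomness entering only through the inhomogeneities $\rho\<Psi>$, $\rho^2\<Psi2>$ and $\rho^2\<IPsi2>$, which by Propositions~\ref{luxo1}, \ref{cerise1} and~\ref{cerise-ssj} are almost surely finite in the relevant spaces. Fixing such an $\omega$, I would write $\Gamma$ for the map sending $v$ to the right-hand side of the mild equation~\eqref{interpretation} and look for a fixed point of $\Gamma$ in a closed ball of the Banach space $Y:=Y^{s,\eta,(p,q)}_\rho(T_0)$, with $s=\frac{d}{2}-\alpha+\varepsilon$, the parameters $\varepsilon>0$, $\eta\in[s,1/2]$ and $(p,q)$ Strichartz-admissible still to be fixed, and $T_0$ small, depending on $\omega$ and $\|\phi\|_{H^{-s}}$.

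First I would check that the linear and stochastic contributions to $\Gamma$ land in $Y$. The free evolution $e^{-it\Delta}\phi$ lies in $\mathcal{C}([0,T_0];H^{-s}(\R^d))$ trivially, in $L^p([0,T_0];\cw^{-s,q}(\R^d))$ by the Strichartz estimates, and in the local-smoothing component $L^{1/\eta}_{T_0}H^{-s+\eta}_\rho$ by Lemma~\ref{local-regu} — this is where the tensor form~\eqref{rho} of $\rho$ is used. The term $\rho^2\<IPsi2>$ is compactly supported and lies in $\mathcal{C}([0,T_0];\cw^{-2s+\kappa,p}(\R^d))$ by Proposition~\ref{cerise-ssj}; for $\alpha_d<\alpha<\frac{d}{2}$ with $s=\frac{d}{2}-\alpha+\varepsilon$ and $\varepsilon$ small one verifies $-2s+\kappa>0$ and $-2s+\kappa\ge -s+\eta$ for a suitable $\eta$, so, using the compact support of $\rho^2$ to pass from $L^p$ to $L^2$ and to trade time-integrability for regularity, $\rho^2\<IPsi2>$ belongs to all three components of $Y$. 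The positivity of $-2s+\kappa$ — i.e. the multilinear smoothing $\kappa$ overcoming $2s$ — is one of the mechanisms dictating the threshold $\alpha_d$ (jointly, for $d=2,3$, with the deterministic quadratic estimate below).

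The three Duhamel integrals then have to be estimated simultaneously in the three norms of $Y$. For $v\in Y$, the local-smoothing component promotes $\rho v(\tau,\cdot)$ to a genuine function of positive Sobolev regularity $-s+\eta$ (for which one needs $\eta>s$, and effectively $\eta>2s$); then $\rho^2|v_\tau|^2=(\rho v_\tau)\overline{(\rho v_\tau)}$ is a product of two such functions, handled by fractional-Leibniz/Sobolev product estimates, while $(\rho\overline{v}_\tau)(\rho\<Psi>_\tau)$ and $(\rho v_\tau)\overline{(\rho\<Psi>_\tau)}$ are products of a function of regularity $-s+\eta$ with a distribution of regularity $-s$, legitimate by Lemma~\ref{product} provided $-s+\eta>s$. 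Feeding the resulting compactly supported inputs into the Strichartz estimates and into the inhomogeneous (Duhamel) version of the local smoothing yields
$$\|\Gamma(v)\|_{Y}\le C\big(\|\phi\|_{H^{-s}}+\|\rho\<Psi>\|_{\mathcal{C}_t\cw^{-s,p}}+\|\rho^2\<IPsi2>\|_{\mathcal{C}_t\cw^{-2s+\kappa,p}}\big)+C\,T_0^{\theta}\big(\|v\|_{Y}^2+\|\rho\<Psi>\|_{\mathcal{C}_t\cw^{-s,p}}\|v\|_{Y}\big)$$
for some $\theta>0$, together with the analogous bilinear Lipschitz bound for $\Gamma(v_1)-\Gamma(v_2)$; the gain $T_0^{\theta}$ comes from Hölder in time against the $L^p_t$ and $L^{1/\eta}_t$ integrabilities. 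Hence, on a ball of fixed radius and for $T_0$ small, $\Gamma$ is a contraction, giving existence and uniqueness in $\mathcal{S}_{T_0}$, i.e. item~$(i)$. The \emph{main obstacle} is the simultaneous fulfilment of four competing requirements: $(p,q)$ Strichartz-admissible, $\eta\le 1/2$ (the ceiling of the local smoothing of Lemma~\ref{local-regu}), $\eta>2s$ (needed for Lemma~\ref{product} on the mixed terms), and the quadratic estimate for $\rho^2|v|^2$ within these constraints; reconciling them is exactly what forces $\alpha>\alpha_d$, and is the only place where the present argument improves on~\cite{Schaeffer1}, through the sharper input of Proposition~\ref{cerise-ssj}.

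For item~$(ii)$, each $\widetilde u_n$ is the fixed point of the map $\Gamma_n$ obtained by replacing $(\rho\<Psi>,\rho^2\<IPsi2>)$ by the smooth pair $(\rho\<Psi>_n,\rho^2\<IPsi2>_n)$. Since $\rho\<Psi>_n\to\rho\<Psi>$ by Proposition~\ref{luxo1} and $\rho^2\<IPsi2>_n\to\rho^2\<IPsi2>$ by Proposition~\ref{cerise-ssj} almost surely, these data are in particular bounded uniformly in $n$, so one can choose $T_0$ and the radius uniformly and run all the contractions on a common ball; the contraction estimates then show that the fixed point depends Lipschitz-continuously on the pair of inhomogeneities, whence $\widetilde v_n:=\widetilde u_n-\<Psi>_n\to v$ in $Y^{s,\eta,(p,q)}_\rho(T_0)$, and in particular in $\mathcal{C}([0,T_0];H^{-s}(\R^d))$. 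Finally, for an arbitrary $\mathcal{C}^{\infty}$ compactly-supported $\chi$, Proposition~\ref{luxo1} applied with $\chi$ in place of $\rho$ gives $\chi\<Psi>_n\to\chi\<Psi>$ in $\mathcal{C}([0,T_0];H^{-s}(\R^d))$; combining this with the previous convergence and the boundedness of multiplication by $\chi$ on $H^{-s}(\R^d)$, we obtain $\chi\widetilde u_n=\chi\widetilde v_n+\chi\<Psi>_n\to\chi v+\chi\<Psi>=\chi u$ in $\mathcal{C}([0,T_0];H^{-s}(\R^d))$, which is item~$(ii)$.
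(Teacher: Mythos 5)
Your proposal is correct and follows essentially the same route as the paper: a deterministic contraction for the map $\Gamma$ in $Y^{s,\eta,(p,q)}_\rho(T_0)$, combining Strichartz estimates, the local smoothing of Lemma~\ref{local-regu}, the product and fractional Leibniz lemmas (with the constraints $\eta>2s$, $\eta\leq 1/2$, $-s+\eta\leq -2s+\kappa$), and deducing item $(ii)$ from the Lipschitz dependence of the fixed point on the pair $(\rho\<Psi>_n,\rho^2\<IPsi2>_n)$. The only difference is one of detail: the paper carries out the dimension-specific bookkeeping (the choices $(p,q)=(\infty,2),(4,4),(2,6)$ and $2s<\eta<\inf(1/2,3/4-s)$, $1/2-s$, $1/4-s$, plus the interpolation and commutator estimates for the quadratic term) which your sketch leaves implicit, and it is this bookkeeping, rather than the positivity of $-2s+\kappa$, that is the binding source of the thresholds $\alpha_d$ for $d=2,3$.
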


\begin{remark}
The proof of this theorem results from a deterministic fixed-point argument. A precise description of the different tools we will resort to is given in Section \ref{point fixe}.
\end{remark}

\begin{remark}
Thanks to the multilinear smoothing, we see that, in dimension one, we are able to solve our model for all $\frac{1}{4}<\alpha<\frac{1}{2}$ (that is as soon as we are able to construct $\<IPsi2>$) and that, when $d=2$ or $d=3$, an additional constraint on $\alpha$ appears. This condition comes from the deterministic part of our work. See Section \ref{point fixe} for more details.
\end{remark}

\begin{remark}
In all this study, the $\mathcal{C}^{\infty}$ compactly-supported function $\rho$ plays a major role. Formally, when $\rho=1$, Definition \ref{sol-sch2} is equivalent to the mild form of equation (\ref{dim-d-sch}) (after renormalization). An important step would be to establish the local well-posedness of (\ref{dim-d-sch}) without any cut-off but, insofar as the Schr\"{o}dinger operator does not provide any form of global regularization, it currently lies beyond our ken.
\end{remark}

\subsection{Notations}

Let $d\geq 1$ be a space dimension. Let us recall some classical notations.\\
\noindent
$\bullet$ $\mathcal{S}(\R^d)$ is the space of Schwartz functions on $\R^d$.\\
$\bullet$ For all $s\in \mathbb{R}$ and $1\leq p< \infty$,
$$\mathcal{W}^{s,p}(\mathbb{R}^d)=\left\{f\in \mathcal{S}'(\mathbb{R}^d):\ \|f \|_{\mathcal{W}^{s,p}}=\|\mathcal{F}^{-1}(\{1+|.|^2\}^{\frac{s}{2}}\mathcal{F}f) |L^p(\mathbb{R}^d)\| 
  <\infty \right\} \, ,$$
where the Fourier transform $\cf$ and the inverse Fourier transform $\cf^{-1}$ are defined by the following formula: for all $f \in \mathcal{S}(\mathbb{R}^d)$ and $x \in \mathbb{R}^d,$
$$\mathcal{F}(f)(x)=\hat{f}(x)=\int_{\mathbb{R}^d} f(y)e^{-i\langle x, y \rangle}dy\,  \quad \text{and} \ \ \mathcal{F}^{-1}(f)(x)=\frac{1}{(2\pi)^d}\int_{\mathbb{R}^d} f(y)e^{i\langle x, y \rangle}dy\, .$$
$\bullet $ For every $s \in \mathbb{R}$, $H^s(\mathbb{R}^d)=\mathcal{W}^{s,2}(\mathbb{R}^d)$.\\
$\bullet$ The operator $\langle \nabla \rangle^{-\alpha}$ is defined through the Fourier transform formula:
$$\cf\big(\langle \nabla \rangle^{-\alpha}(v)\big)(\xi)=\{1+|\xi|^2\}^{-\frac{\alpha}{2}} \cf v (\xi) \, .$$

\subsection{Organization of the paper}
In section \ref{stochastic constr}, we prove Propositions $\ref{luxo1}$, $\ref{cerise1}$ and $\ref{cerise-ssj}$. Then, in Section $\ref{point fixe}$, we establish the deterministic fixed-point leading to the local well-posedness of our model.

\section{On the construction of the relevant stochastic objects}\label{stochastic constr}

In this section, we propose a rigorous construction of the three stochastic processes $\<Psi>$, $\<Psi2>$ and $\<IPsi2>$ at the core of our problem. The first order stochastic process $\<Psi>$ is solution of
\begin{equation*}
\left\{
\begin{array}{l}
i\partial_t \<Psi>-\Delta \<Psi>= \langle \nabla \rangle^{-\alpha}\dot{W} \, , \quad  t\in [0,T] \, , \, x\in \R^d \, ,\\
\<Psi>(0,.)=0\, .
\end{array}
\right.
\end{equation*}
Formally, it is defined for every $t \in [0,T]$ by:
$$\<Psi>(t,.)=-i\int_{0}^t \frac{e^{-i(t-t')\Delta}}{\langle \nabla \rangle^{\alpha}}\dot{W}(t',.)dt'.$$
Let us introduce the sequence of truncated stochastic processes $(\<Psi>_n)_{n \in \mathbb{N}}$ defined for all $t \in [0,T]$ by:
$$\<Psi>_n(t,.)=-i\int_{0}^t \frac{e^{-i(t-t')\Delta}}{\langle \nabla \rangle^{\alpha}}\chi_n(\nabla)\dot{W}(t^{\prime},.)dt',$$
where $\chi_n$ is the indicator function of the ball of radius $n$ in $\mathbb{R}^d$. One may observe that an analogous regularization procedure has been used in the waves setting by Tolomeo (see \cite{Tolomeo}).
Let us consider, for every $n \in \mathbb{N}$, the operator $A_n$ defined for any $t \in [0,T]$ by:
$$A_n(t)=\frac{e^{-it\Delta}}{\langle \nabla \rangle^{\alpha}}\chi_n(\nabla).$$
It holds that, for all $\phi \in \mathcal{S}(\mathbb{R}^d)$,
\begin{eqnarray*}
[A_n(t)\phi](x)&=&\mathcal{F}^{-1}\bigg(\frac{e^{it|\xi|^2}}{(1+|\xi|^2)^{\frac{\alpha}{2}} }\chi_n(|\xi|)\mathcal{F}(\phi)\bigg)(x)\\
&=&\mathcal{F}^{-1}\bigg(\frac{e^{it|\xi|^2}}{(1+|\xi|^2)^{\frac{\alpha}{2}} }\chi_n(|\xi|)\bigg)\star\phi(x)\\
&=&a_n(t,.) \star \phi (x),
\end{eqnarray*}
where $$a_n(t,x)=\frac{1}{(2\pi)^d}\int_{B_n}\frac{e^{it|\xi|^2}}{(1+|\xi|^2)^{\frac{\alpha}{2}} }e^{i\langle x,\xi \rangle}d\xi.$$
Consequently, $\<Psi>_n$ can be properly defined in the following way: for all $(t,x) \in [0,T]\times \mathbb{R}^d$,
\begin{align}\<Psi>_n(t,x)=-i\int_0^t \int_{\mathbb{R}^d}a_{n}(t-t',x-x')\dot{W}(t',x')dt'dx'=-i\int_0^t \int_{\mathbb{R}^d}a_{n}(t-t',x-x')W(dt',dx'),\label{defiluxo}
\end{align}
and Ito's isometry allows us to compute the associated covariance function.

\begin{definition}\label{luxn}
Let $d \geq 1$ be a space dimension and $T>0$ a positive time. We call regularized stochastic convolution any centered complex Gaussian process
$$\Big\{\<Psi>_n(s,x), n \in \mathbb{N}, 0\leq s \leq T, x \in \mathbb{R}^d\Big\}$$
whose covariance function verifies: for all $(n,m) \in \mathbb{N}^2$, $(s,t) \in [0,T]^2$ and $(x,y)\in \mathbb{R}^d$,
\begin{align*}
&\mathbb{E}\Big[\<Psi>_n(s,x)\overline{\<Psi>_m(t,y)}\Big]=\min(s,t)\frac{1}{(2\pi)^d}\int_{B_n \cap B_m}\frac{e^{i(s-t)|\xi|^2}}{(1+|\xi|^2)^{\alpha}}e^{-i\langle \xi, x - y\rangle}d\xi,\\
&\mathbb{E}\Big[\<Psi>_n(s,x)\<Psi>_m(t,y)\Big]=\frac{1}{(2\pi)^d}\int_{0}^{\min(s,t)}\int_{B_n \cap B_m}\frac{e^{i(s+t-2t')|\xi|^2}}{(1+|\xi|^2)^{\alpha}}e^{-i\langle \xi, x - y\rangle}d\xi dt',
\end{align*}
where $B_n=\Big\{\xi \in \mathbb{R}^d, |\xi|\leq n \Big\}$.
\end{definition}

\subsection{Technical lemmas}
We begin by introducing our stochastic tools. The first one is Wick's formula that assures that the mean of a product of Gaussian random variables can be written as a sum of product of means of only two Gaussian random variables. The interested reader shall find more details in \cite{wick-ter} for instance.
\begin{lemma}\label{wick formula}
Let $(\Omega, \mathcal{F},\mathbb{P})$ be a probability space. Let $n \geq 1$ and $X_1,\dots,X_{2n}$ be real-valued jointly Gaussian random variables. Then, it holds that
$$\mathbb{E}[X_1\cdots X_{2n}]=\sum_{\text{pairings}\hspace{0,1cm}\mathcal{P}\hspace{0,1cm}\text{of}\hspace{0,1cm}\{1,\dots,2n\}} \prod_{(i,j)\in \mathcal{P}} \mathbb{E}[X_i X_j]$$
where a pairing of $\{1,\dots,2n\}$ is a partition $\mathcal{P}=\{\{i_1,j_1\},\dots,\{i_n,j_n\}\}$ of this set into disjoint subsets of two elements.
\end{lemma}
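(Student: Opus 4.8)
The plan is to establish this classical identity (Isserlis' theorem) by means of the joint characteristic function, the combinatorial content being the identification of the surviving derivatives with pairings. Throughout I would assume, as holds in all the applications in this paper, that the $X_i$ are centered; this is precisely the regime in which the stated formula (involving the raw second moments $\mathbb{E}[X_iX_j]$ rather than general covariances and with no lower-order terms) is valid.

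First I would record the covariance data. Write $\sigma_{jk}=\mathbb{E}[X_jX_k]$ and let $\Sigma=(\sigma_{jk})_{1\le j,k\le 2n}$ be the symmetric positive semidefinite covariance matrix of the Gaussian vector $X=(X_1,\dots,X_{2n})$. The fundamental fact I would invoke is the explicit form of the joint characteristic function of a centered Gaussian vector, namely
$$\mathbb{E}\big[e^{i\langle t, X\rangle}\big]=\exp\Big(-\tfrac{1}{2}\sum_{j,k}\sigma_{jk}\,t_jt_k\Big)=:e^{Q(t)},\qquad t\in\mathbb{R}^{2n}.$$

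Next I would recover the mixed moment by differentiation. Since $\partial_{t_j}e^{i\langle t,X\rangle}=iX_je^{i\langle t,X\rangle}$, differentiating in each variable once, justifying the interchange of derivative and expectation by Gaussian integrability, and evaluating at $t=0$ gives $\mathbb{E}[X_1\cdots X_{2n}]=i^{-2n}\,\partial_{t_1}\cdots\partial_{t_{2n}}e^{Q(t)}\big|_{t=0}$. The key observation is that $Q$ is a quadratic form, so that $Q(0)=0$, $\partial_{t_j}Q(0)=0$, all partial derivatives of $Q$ of order at least three vanish identically, while $\partial_{t_i}\partial_{t_j}Q=-\sigma_{ij}$. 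Expanding the $2n$-fold derivative of $e^{Q}$ by the product rule and using these facts, every term in which some derivative lands on a factor $Q$ or on a first derivative of $Q$ is killed at $t=0$; the only surviving contributions are those in which the $2n$ derivatives $\partial_{t_1},\dots,\partial_{t_{2n}}$ are grouped into $n$ disjoint pairs, each pair producing one second-derivative factor $\partial_{t_i}\partial_{t_j}Q=-\sigma_{ij}$. Such groupings are exactly the pairings $\mathcal{P}$ of $\{1,\dots,2n\}$, so that $\partial_{t_1}\cdots\partial_{t_{2n}}e^{Q}\big|_{0}=(-1)^n\sum_{\mathcal{P}}\prod_{\{i,j\}\in\mathcal{P}}\sigma_{ij}$. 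Finally the prefactor $i^{-2n}=(-1)^n$ cancels the sign $(-1)^n$, yielding $\mathbb{E}[X_1\cdots X_{2n}]=\sum_{\mathcal{P}}\prod_{\{i,j\}\in\mathcal{P}}\mathbb{E}[X_iX_j]$, as claimed.

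The only point requiring genuine care is the combinatorial bookkeeping in the derivative expansion, namely verifying that the surviving terms are in bijection with pairings and that each occurs with multiplicity one. An alternative and perhaps more transparent route, which I would mention, proceeds by induction on $n$ using Gaussian integration by parts (Stein's identity): for a centered jointly Gaussian family one has $\mathbb{E}[X_1\, g(X_2,\dots,X_{2n})]=\sum_{j=2}^{2n}\mathbb{E}[X_1X_j]\,\mathbb{E}[\partial_{x_j}g]$, and applying this with $g(x_2,\dots,x_{2n})=x_2\cdots x_{2n}$ reduces the $2n$-fold moment to a sum over $j$ of $\mathbb{E}[X_1X_j]$ times a $(2n-2)$-fold moment. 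This is precisely the recursion satisfied by the collection of pairings, each pairing being determined by the partner of the element $1$ together with a pairing of the remaining $2n-2$ indices, so the induction closes with the trivial base case $\mathbb{E}[X_1X_2]$. Being a classical result, there is no substantial analytic obstacle; the difficulty lies entirely in the combinatorics and in recording the centering hypothesis under which the statement holds.
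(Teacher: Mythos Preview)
Your proof is correct; both routes you sketch (differentiating the joint characteristic function and the induction via Gaussian integration by parts) are standard and valid derivations of Isserlis' theorem, and your remark about the centering hypothesis is well placed. The paper does not actually prove this lemma: it merely states it and refers the reader to \cite{wick-ter} (Nualart's book), so there is no proof in the paper to compare against.
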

\noindent
The lemma below describes the hypercontractivity of Wiener Chaoses and deals with products of Gaussian random variables. It will permit us to turn $L^2(\Omega)$-bounds into $L^{p}(\Omega)$-bounds for $p\geq 2$. A classical reference on this topic is \cite{wick-ter}.
\begin{lemma}\label{chaos}
Let $(\Omega, \mathcal{F},\mathbb{P})$ be a probability space. Let $k \geq 1$ and $c(n_1,\dots,n_k) \in \mathbb{C}$. Let $d \geq 1$ and $g_1,\dots,g_d$ a sequence of independent standard complex-valued Gaussian random variables. Let $S_k: \Omega \rightarrow \mathbb{C}$ the random variable defined by, for all $\omega \in \Omega$,
$$S_k(\omega)=\sum_{\Gamma(k,d)}c(n_1,\dots,n_k)g_{n_1}(\omega)\cdots g_{n_k}(\omega),$$
where $$\Gamma(k,d)=\{(n_1,\dots,n_k) \in \{1,\dots,d\}^k\}.$$
Then, for every $p \geq 2$, it holds that
$$\|S_k\|_{L^p(\Omega)}\leq \sqrt{k+1}(p-1)^{\frac{k}{2}}\|S_k\|_{L^2(\Omega)}.$$
\end{lemma}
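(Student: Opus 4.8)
The plan is to deduce the bound from Nelson's hypercontractivity of the Ornstein--Uhlenbeck semigroup, combined with the Wiener--It\^o chaos decomposition of $S_k$. First I would record the consequence of hypercontractivity I actually need. Let $(T_t)_{t\geq 0}$ be the Ornstein--Uhlenbeck semigroup associated with the Gaussian family generated by $g_1,\dots,g_d$ (equivalently, by their real and imaginary parts); it acts on the $j$-th homogeneous Wiener chaos $\mathcal{H}_j$ by $T_tf=e^{-jt}f$. Nelson's theorem (see \cite{wick-ter}) asserts that $\|T_tf\|_{L^p(\Omega)}\leq\|f\|_{L^2(\Omega)}$ as soon as $e^{2t}\geq p-1$, and it remains valid for complex-valued $f$. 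Applying this to $f\in\mathcal{H}_j$ and choosing $t$ with $e^{2t}=p-1$ yields the \emph{homogeneous} estimate
$$\|f\|_{L^p(\Omega)}\leq(p-1)^{\frac{j}{2}}\|f\|_{L^2(\Omega)},\qquad f\in\mathcal{H}_j.$$

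Next I would decompose $S_k$ over the Wiener chaoses. Since each monomial $g_{n_1}\cdots g_{n_k}$ is a polynomial of degree $k$ in the underlying real Gaussian variables, $S_k$ belongs to $\bigoplus_{j=0}^{k}\mathcal{H}_j$. Writing $J_j$ for the orthogonal projection of $S_k$ onto $\mathcal{H}_j$, we have $S_k=\sum_{j=0}^{k}J_j$, a sum of at most $k+1$ mutually $L^2$-orthogonal terms. Applying the homogeneous estimate above to each $J_j$, and using $j\leq k$ together with $p-1\geq 1$, gives $\|J_j\|_{L^p(\Omega)}\leq(p-1)^{\frac{k}{2}}\|J_j\|_{L^2(\Omega)}$ for every $j$.

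Then I would combine these bounds. By the triangle inequality in $L^p(\Omega)$,
$$\|S_k\|_{L^p(\Omega)}\leq\sum_{j=0}^{k}\|J_j\|_{L^p(\Omega)}\leq(p-1)^{\frac{k}{2}}\sum_{j=0}^{k}\|J_j\|_{L^2(\Omega)}.$$
Since there are at most $k+1$ summands, Cauchy--Schwarz gives $\sum_{j=0}^{k}\|J_j\|_{L^2(\Omega)}\leq\sqrt{k+1}\,\bigl(\sum_{j=0}^{k}\|J_j\|_{L^2(\Omega)}^2\bigr)^{1/2}$, while the $L^2$-orthogonality of distinct chaos components yields $\sum_{j=0}^{k}\|J_j\|_{L^2(\Omega)}^2=\|S_k\|_{L^2(\Omega)}^2$. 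Chaining these inequalities produces exactly $\|S_k\|_{L^p(\Omega)}\leq\sqrt{k+1}\,(p-1)^{\frac{k}{2}}\|S_k\|_{L^2(\Omega)}$.

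The main obstacle is not analytic but organizational: one must set up the chaos decomposition correctly for \emph{complex} Gaussians and check that hypercontractivity applies in the complex-valued setting. The cleanest route is to realize each $g_n$ through a pair of independent real standard Gaussians and run the entire argument inside the real Gaussian Hilbert space they generate, noting that $S_k$ is still a polynomial of degree $k$ there; alternatively, one invokes the complex-valued form of Nelson's inequality directly. Everything else---the homogeneous hypercontractive estimate, the triangle inequality, and the Cauchy--Schwarz/orthogonality step---is routine once this bookkeeping is settled.
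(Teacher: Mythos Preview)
Your proposal is correct and follows the standard route to this inequality. The paper does not supply its own proof of this lemma; it merely cites Nualart's book \cite{wick-ter} as a classical reference, and the argument you give---Nelson's hypercontractivity for the Ornstein--Uhlenbeck semigroup yielding the homogeneous chaos bound, followed by the chaos decomposition of $S_k$ into at most $k+1$ orthogonal pieces, the triangle inequality in $L^p$, and Cauchy--Schwarz combined with $L^2$-orthogonality---is exactly the derivation one finds there.
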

\noindent
Let us now present our two main deterministic tools. The following lemma will be of constant use to bring back computations on compact domains. In particular, we will be able to construct our stochastic processes in $\mathcal{W}^{-s,p}(\mathbb{R}^d)$ for some $s \in \mathbb{R}$ and for all $2\leq p <\infty$, up to multiplication by a test function. A proof of this lemma can be found in \cite{Schaeffer1}.
\begin{lemma}\label{compact}
Let $\alpha \in \mathbb{R}$ and $\rho: \mathbb{R}^d \rightarrow \mathbb{R}$ a $\mathcal{C}^{\infty}$ compactly-supported function. For all $p \geq 1$ and $(\eta_1,\dots,\eta_p) \in (\mathbb{R}^d)^p$, it holds that
$$\Bigg|\int_{\mathbb{R}^d}dx \prod_{i=1}^p \iint_{(\mathbb{R}^d)^2}\frac{d\lambda_i d\lati_i}{(1+|\lambda_i|^2)^{\frac{\alpha}{2}}(1+|\lati_i|^2)^{\frac{\alpha}{2}}}e^{i \langle x, \lambda_i - \lati_i \rangle}\hat{\rho}(\lambda_i-\eta_i)\overline{\hat{\rho}(\lati_i-\eta_i)}\Bigg|\lesssim \prod_{i=1}^{p}\frac{1}{(1+|\eta_i|^2)^{\alpha}},$$
where the proportional constant only depends on $\rho$ and $\alpha$.
\end{lemma}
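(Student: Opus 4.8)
The plan is to recognise the inner double integral in the statement as a modulus square, and thereby to collapse the whole estimate into a single scalar Fourier–decay estimate with explicit control in the shift $\eta_i$.

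Concretely, write $\langle\xi\rangle=(1+|\xi|^2)^{1/2}$ and, for each $i\in\{1,\dots,p\}$ and $x\in\R^d$, set
\[
h_i(x):=\int_{\R^d}\frac{e^{i\langle x,\lambda\rangle}}{(1+|\lambda|^2)^{\alpha/2}}\,\hat{\rho}(\lambda-\eta_i)\,d\lambda .
\]
Since $\hat{\rho}$ is a Schwartz function, $\lambda\mapsto(1+|\lambda|^2)^{-\alpha/2}\hat{\rho}(\lambda-\eta_i)$ is integrable for every $\alpha\in\R$, so $h_i$ is well defined; since the weight is real, the $\tilde\lambda_i$–integral appearing in the statement is exactly $\overline{h_i(x)}$. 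Because the $(\lambda_i,\tilde\lambda_i)$–integral converges absolutely, with a bound uniform in $x$, Fubini shows that the inner double integral equals $h_i(x)\overline{h_i(x)}=|h_i(x)|^2\geq0$. Consequently the quantity inside the absolute value on the left–hand side is the nonnegative real number $\int_{\R^d}\prod_{i=1}^p|h_i(x)|^2\,dx$, and it suffices to bound it by $C(\rho,\alpha)\prod_{i=1}^p\langle\eta_i\rangle^{-2\alpha}$.

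The core step is a pointwise estimate on one factor that exhibits decay both in $\eta_i$ and in $x$:
\[
|h_i(x)|\ \leq\ C_N\,\langle\eta_i\rangle^{-\alpha}\,\langle x\rangle^{-2N}\qquad\text{for every }N\in\N .
\]
To obtain it I would translate $\lambda=\eta_i+\mu$, so that $h_i(x)=e^{i\langle x,\eta_i\rangle}\int_{\R^d}e^{i\langle x,\mu\rangle}\langle\eta_i+\mu\rangle^{-\alpha}\hat{\rho}(\mu)\,d\mu$, and invoke the Peetre inequality $\langle\eta_i+\mu\rangle^{s}\leq 2^{|s|/2}\langle\eta_i\rangle^{s}\langle\mu\rangle^{|s|}$, valid for every real $s$. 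With $s=-\alpha$ this already gives the crude bound $|h_i(x)|\lesssim\langle\eta_i\rangle^{-\alpha}\int_{\R^d}\langle\mu\rangle^{|\alpha|}|\hat{\rho}(\mu)|\,d\mu\lesssim\langle\eta_i\rangle^{-\alpha}$. For the $x$–decay I would integrate by parts $N$ times via $\langle x\rangle^{2N}e^{i\langle x,\mu\rangle}=(1-\Delta_\mu)^{N}e^{i\langle x,\mu\rangle}$ (there are no boundary terms, since $\langle\eta_i+\cdot\rangle^{-\alpha}\hat{\rho}$ and all its derivatives decay rapidly), obtaining $\langle x\rangle^{2N}|h_i(x)|\leq\int_{\R^d}\big|(1-\Delta_\mu)^{N}\big(\langle\eta_i+\mu\rangle^{-\alpha}\hat{\rho}(\mu)\big)\big|\,d\mu$. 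Expanding by the Leibniz rule, each summand pairs a derivative $\partial^{\beta}\langle\eta_i+\cdot\rangle^{-\alpha}$, which Peetre controls by $C_\beta\langle\eta_i\rangle^{-\alpha}\langle\mu\rangle^{|\alpha|+|\beta|}$ (using $\langle\eta_i\rangle^{-|\beta|}\leq1$), with a derivative $\partial^{\gamma}\hat{\rho}$, whose rapid decay absorbs the polynomial factor and leaves an integrable function of $\mu$; hence $\langle x\rangle^{2N}|h_i(x)|\lesssim_N\langle\eta_i\rangle^{-\alpha}$. Combining the two bounds yields the displayed estimate.

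Finally, feeding this into the integral and choosing $N$ with $4Np>d$ (for instance $N=1$ when $1\leq d\leq 3$),
\[
\int_{\R^d}\prod_{i=1}^p|h_i(x)|^2\,dx\ \leq\ C_N^{\,2p}\Big(\int_{\R^d}\langle x\rangle^{-4Np}\,dx\Big)\prod_{i=1}^p\langle\eta_i\rangle^{-2\alpha},
\]
and since the $x$–integral converges and $\langle\eta_i\rangle^{-2\alpha}=(1+|\eta_i|^2)^{-\alpha}$, this is exactly the asserted inequality (the dependence of the constant on the fixed parameters $d$ and $p$ being immaterial in the applications). The step I expect to be the main obstacle — or at any rate the one requiring care — is the very first one: one must notice that the $\tilde\lambda_i$–part of the integrand is the complex conjugate of the $\lambda_i$–part, so that the $x$–integrand is a genuine nonnegative modulus square; this is what makes the (not obviously convergent) outer $x$–integral meaningful and, simultaneously, decouples the $p$ a priori coupled integration variables into $p$ independent scalar estimates. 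Everything after that is a routine Peetre-inequality-plus-integration-by-parts computation, carried out uniformly in the shifts $\eta_i$.
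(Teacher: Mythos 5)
Your argument is correct, and since the paper itself does not prove Lemma \ref{compact} but only refers to \cite{Schaeffer1}, there is no internal proof to compare it with; your route — observing that the $\tilde\lambda_i$-integral is the complex conjugate of the $\lambda_i$-integral, so that the inner double integral factorizes as $|h_i(x)|^2\ge 0$, and then proving the joint decay $|h_i(x)|\lesssim_N (1+|\eta_i|^2)^{-\alpha/2}(1+|x|^2)^{-N}$ by translating $\lambda=\eta_i+\mu$, applying Peetre's inequality, and integrating by parts with $(1-\Delta_\mu)^N$ — is precisely the natural (and, in substance, the cited reference's) argument, and each step checks out: absolute convergence justifies the Fubini factorization, the symbol estimate $|\partial^\beta(1+|\eta_i+\mu|^2)^{-\alpha/2}|\lesssim (1+|\eta_i+\mu|^2)^{-(\alpha+|\beta|)/2}$ combined with Peetre gives the uniform-in-$x$ control, and the final $x$-integration converges once $4Np>d$. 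One remark on the constant: your bound is of the form $C(\rho,\alpha,d)^{p}$ times a convergent $x$-integral, whereas the statement asserts dependence on $\rho$ and $\alpha$ only. A constant genuinely uniform in $p$ is in fact impossible (take $\alpha=0$ and all $\eta_i=0$: the left-hand side equals $(2\pi)^{2pd}\int_{\mathbb{R}^d}|\rho|^{2p}$, which grows geometrically in $p$), so the exponential-in-$p$ dependence you flag is the correct reading of the lemma and is harmless in its use here, where $p$ is fixed before the estimate is applied after hypercontractivity.
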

\noindent
Our last lemma will allow us to control some continuous convolutions in the proof of $\<Psi2>$ and $\<IPsi2>$ constructions. A proof can be found for instance in \cite{Ginibre} for $d=1$ or in \cite[Lemma 4.1 p 26]{Mourrat} for all $d \geq 1$ in the discrete case (it suffices to replace the summation by an integral to obtain the result below).
\begin{lemma}\label{convolution}
Let $d \geq 1$ be a space dimension and $\alpha$ and $\beta$ be two non-negative real numbers. If
$$\alpha < d, \quad \beta < d \quad \text{and} \quad \alpha + \beta >d \, ,$$
then, for every $x \in \mathbb{R}^d$, the following bound holds true
$$\int_{\mathbb{R}^d}\frac{dy}{(1+|x-y|^2)^{\frac{\alpha}{2}}}\frac{1}{(1+|y|^2)^{\frac{\beta}{2}}}\lesssim \frac{1}{(1+|x|^2)^{\frac{\alpha+\beta-d}{2}}}.$$
\end{lemma}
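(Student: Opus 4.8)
The plan is to prove the bound by the standard regional decomposition of $\R^d$ according to the relative sizes of $|y|$, $|x-y|$ and $|x|$, using throughout the abbreviation $\langle z\rangle:=(1+|z|^2)^{1/2}$, so that the claim reads $\int_{\R^d}\langle x-y\rangle^{-\alpha}\langle y\rangle^{-\beta}\,dy\lesssim\langle x\rangle^{-(\alpha+\beta-d)}$. The only elementary inputs I will need are the two radial estimates, valid for $R\geq 1$: namely $\int_{|z|\leq R}\langle z\rangle^{-\gamma}\,dz\lesssim R^{d-\gamma}$ when $0\leq\gamma<d$, and $\int_{|z|\geq R}\langle z\rangle^{-\gamma}\,dz\lesssim R^{d-\gamma}$ when $\gamma>d$; both follow at once from $\langle z\rangle\approx|z|$ for $|z|\geq 1$ and a passage to polar coordinates. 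I first dispose of the range $|x|\leq 2$: there $\langle x\rangle\approx 1$, and the integral is finite (its integrand is everywhere bounded by $1$ and decays like $|y|^{-(\alpha+\beta)}$ at infinity, which is integrable precisely because $\alpha+\beta>d$) and depends continuously on $x$, hence is uniformly bounded on the compact set $\{|x|\leq 2\}$. From now on I assume $|x|\geq 2$, so that $|x|/2\geq 1$.

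For $|x|\geq 2$ I split $\R^d=R_1\cup R_2\cup R_3$ with $R_1=\{|y|\leq|x|/2\}$, $R_2=\{|x-y|\leq|x|/2\}$ and $R_3=\R^d\setminus(R_1\cup R_2)$. On $R_1$ the triangle inequality gives $|x-y|\geq|x|/2$, hence $\langle x-y\rangle\gtrsim\langle x\rangle$; factoring this out and using the first radial estimate with $\gamma=\beta<d$ and $R=|x|/2$ yields $\int_{R_1}\lesssim\langle x\rangle^{-\alpha}|x|^{d-\beta}\lesssim\langle x\rangle^{-(\alpha+\beta-d)}$. The region $R_2$ is treated identically after exchanging the roles of the two factors (and of $\alpha,\beta$): there $\langle y\rangle\gtrsim\langle x\rangle$, and the first radial estimate with $\gamma=\alpha<d$ gives the same bound. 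These two steps use exactly the hypotheses $\beta<d$ and $\alpha<d$.

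The only delicate region is $R_3=\{|y|>|x|/2,\ |x-y|>|x|/2\}$, where I further split according to $|y|\leq 2|x|$ or $|y|>2|x|$. On the annular piece $\{|x|/2<|y|\leq 2|x|\}$ one has $\langle y\rangle\approx\langle x\rangle$, so after factoring $\langle y\rangle^{-\beta}\approx\langle x\rangle^{-\beta}$ it remains to integrate $\langle x-y\rangle^{-\alpha}$ over $\{|x|/2\leq|x-y|\leq 3|x|\}$, which by the first radial estimate ($\gamma=\alpha<d$) is $\lesssim|x|^{d-\alpha}$, giving again $\langle x\rangle^{-\beta}|x|^{d-\alpha}\lesssim\langle x\rangle^{-(\alpha+\beta-d)}$. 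On the far piece $\{|y|>2|x|\}$ one has $|x-y|\geq|y|-|x|\geq|y|/2$, hence $\langle x-y\rangle\gtrsim\langle y\rangle$ and the integrand is $\lesssim\langle y\rangle^{-(\alpha+\beta)}$; the second radial estimate with $\gamma=\alpha+\beta>d$ and $R=2|x|$ then yields $\int_{|y|>2|x|}\langle y\rangle^{-(\alpha+\beta)}\,dy\lesssim|x|^{d-(\alpha+\beta)}=\langle x\rangle^{-(\alpha+\beta-d)}$. Summing the contributions of $R_1$, $R_2$ and the two pieces of $R_3$ gives the claim. I expect the main obstacle to be precisely this last region: a naive bound $\langle y\rangle^{-\beta}\lesssim\langle x\rangle^{-(\alpha+\beta-d)}\langle y\rangle^{-(d-\alpha)}$ extended over all of $R_3$ would produce a convolution with summed exponent exactly $d$, and hence a spurious logarithmic factor $\log\langle x\rangle$; separating off the genuinely far field $\{|y|>2|x|\}$ and using there the \emph{strict} inequality $\alpha+\beta>d$ is what removes the logarithm and produces the sharp power.
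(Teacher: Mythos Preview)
Your proof is correct. The paper does not actually prove this lemma but only cites \cite{Ginibre} (for $d=1$) and \cite[Lemma~4.1]{Mourrat} (discrete case, general $d$); your regional decomposition into $\{|y|\leq|x|/2\}$, $\{|x-y|\leq|x|/2\}$ and the complement is precisely the standard argument used in those references, and your handling of the far field $\{|y|>2|x|\}$ correctly avoids the logarithmic loss.
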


\subsection{Construction of the first order stochastic process}

\begin{proof}[Proof of Proposition~\ref{luxo1}]
${}$\\
Let $\rho: \mathbb{R}^d \rightarrow \mathbb{R}$ be a $\mathcal{C}^{\infty}$ compactly-supported function and $2 \leq p <\infty$. Let $s$ and $t$ be two real numbers such that $0\leq s \leq t \leq T$. In the following, for all $(m,n) \in \mathbb{N}^2$, we will use the shortcut notations: $\<Psi>_{m,n}=\<Psi>_m - \<Psi>_n$, $a_{m,n}=a_m - a_n$ and $B_{m,n}=B_m \backslash B_n$. As in \cite{Schaeffer1, Schaeffer2}, our aim is to prove that for all $(m,n) \in \mathbb{N}^2$ such that $1 \leq n \leq m$, the following bound holds true:
$$\int_{\mathbb{R}^d}dx\,\mathbb{E}\bigg[\Big|\mathcal{F}^{-1}\Big(\big(1+|.|^2\big)^{-\frac{s}{2}}\mathcal{F}\Big(\rho [\<Psi>_{m,n}(t,.)-\<Psi>_{m,n}(s,.)]\Big)\Big)(x)\Big|^{2p}\bigg]\lesssim \frac{(t-s)^{\varepsilon p}}{n^{2\varepsilon p}},$$
for $\varepsilon>0$ small enough.\\
First of all, for every $x \in \mathbb{R}^d$, we write
\begin{align*}
&\mathbb{E}\bigg[\Big|\mathcal{F}^{-1}\Big(\big(1+|.|^2\big)^{-\frac{s}{2}}\mathcal{F}\Big(\rho [\<Psi>_{m,n}(t,.)-\<Psi>_{m,n}(s,.)]\Big)\Big)(x)\Big|^2\bigg]\\
&=\frac{1}{(2\pi)^{4d}} \iint_{(\mathbb{R}^d)^2}\frac{d\lambda d\lati}{(1+|\lambda|^2)^{\frac{s}{2}}(1+|\lati|^2)^{\frac{s}{2}}}e^{i \langle x, \lambda - \lati \rangle} \iint_{(\mathbb{R}^d)^2} d\beta d\betati \, \hat{\rho}(\lambda-\beta)\overline{\hat{\rho}(\lati-\betati)}\\
&\hspace{0,5cm}\mathbb{E}\bigg[\mathcal{F}\Big(\<Psi>_{m,n}(t,.)-\<Psi>_{m,n}(s,.)\Big)(\beta)\overline{\mathcal{F}\Big(\<Psi>_{m,n}(t,.)-\<Psi>_{m,n}(s,.)\Big)(\betati)}\bigg]\\
&=\frac{1}{(2\pi)^{4d}} \iint_{(\mathbb{R}^d)^2}\frac{d\lambda d\lati}{(1+|\lambda|^2)^{\frac{s}{2}}(1+|\lati|^2)^{\frac{s}{2}}}e^{i \langle x, \lambda - \lati \rangle} \iint_{(\mathbb{R}^d)^2} d\beta d\betati \, \hat{\rho}(\lambda-\beta)\overline{\hat{\rho}(\lati-\betati)}\\
&\hspace{0,5cm}\int_{\mathbb{R}^d}dy \, e^{-i\langle \beta, y \rangle}\int_{\mathbb{R}^d}d\tilde{y}\, e^{i\langle \betati, \tilde{y}\rangle}\mathbb{E}\bigg[\big(\<Psi>_{m,n}(t,y)-\<Psi>_{m,n}(s,y)\big)\big(\overline{\<Psi>_{m,n}(t,\tilde{y})-\<Psi>_{m,n}(s,\tilde{y})}\big)\bigg].
\end{align*}
Let us focus on the estimation of the previous mean. Coming back to the definition of $\<Psi>_n$ (\ref{defiluxo}), we compute
\begin{align*}
&\<Psi>_{m,n}(t,y)-\<Psi>_{m,n}(s,y)=-i\int_0^t \int_{\mathbb{R}^d}a_{m,n}(t-t',y-x')W(dt',dx')+i\int_0^s \int_{\mathbb{R}^d}a_{m,n}(s-t',y-x')W(dt',dx')\\
&=-i\int_s^t \int_{\mathbb{R}^d}a_{m,n}(t-t',y-x')W(dt',dx')-i\int_0^s \int_{\mathbb{R}^d}\Big[a_{m,n}(t-t',y-x')-a_{m,n}(s-t',y-x')\Big]W(dt',dx')
\end{align*}
and, in a similar way,
\begin{align*}
&\overline{\<Psi>_{m,n}(t,\tilde{y})-\<Psi>_{m,n}(s,\tilde{y})}\\
&=i\int_s^t \int_{\mathbb{R}^d}\overline{a_{m,n}(t-t',\tilde{y}-x')}W(dt',dx')+i\int_0^s \int_{\mathbb{R}^d}\Big[\overline{a_{m,n}(t-t',\tilde{y}-x')-a_{m,n}(s-t',\tilde{y}-x')}\Big]W(dt',dx').
\end{align*}
Resorting successively to Ito's isometry and Plancherel's identity, we obtain
\begin{align}
&\mathbb{E}\bigg[\big(\<Psi>_{m,n}(t,y)-\<Psi>_{m,n}(s,y)\big)\big(\overline{\<Psi>_{m,n}(t,\tilde{y})-\<Psi>_{m,n}(s,\tilde{y})}\big)\bigg]=\int_s^t \int_{\mathbb{R}^d}a_{m,n}(t-t',y-x')\overline{a_{m,n}(t-t',\tilde{y}-x')}dt'dx'\nonumber\\
&+\int_0^s \int_{\mathbb{R}^d}\Big[a_{m,n}(t-t',y-x')-a_{m,n}(s-t',y-x')\Big]\Big[\overline{a_{m,n}(t-t',\tilde{y}-x')-a_{m,n}(s-t',\tilde{y}-x')}\Big]dt'dx'\nonumber\\
&=\frac{1}{(2\pi)^d}\int_s^t \int_{B_{m,n}}\frac{1}{(1+|\xi|^2)^{\alpha}}e^{-i\langle \xi, y-\tilde{y}\rangle}dt'd\xi+\frac{1}{(2\pi)^d}\int_0^s \int_{B_{m,n}}\frac{\big|e^{i(t-t')|\xi|^2}-e^{i(s-t')|\xi|^2}\big|^2}{(1+|\xi|^2)^{\alpha}}e^{-i\langle \xi, y-\tilde{y}\rangle}dt'd\xi. \label{transfo}
\end{align}
Now, using equality (\ref{transfo}), it holds that
\begin{align}
&\int_{\mathbb{R}^d}dy \, e^{-i\langle \beta, y \rangle}\int_{\mathbb{R}^d}d\tilde{y}\, e^{i\langle \betati, \tilde{y}\rangle}\mathbb{E}\bigg[\big(\<Psi>_{m,n}(t,y)-\<Psi>_{m,n}(s,y)\big)\big(\overline{\<Psi>_{m,n}(t,\tilde{y})-\<Psi>_{m,n}(s,\tilde{y})}\big)\bigg]\nonumber\\
&=(2\pi)^d \, (t-s)\int_{B_{m,n}}\delta_{\{\beta=\xi\}}\delta_{\{\betati=\xi\}}\frac{d\xi}{(1+|\xi|^2)^{\alpha}}+(2\pi)^d \, \int_0^s \int_{B_{m,n}}\delta_{\{\beta=\xi\}}\delta_{\{\betati=\xi\}} \frac{\big|e^{i(t-t')|\xi|^2}-e^{i(s-t')|\xi|^2}\big|^2}{(1+|\xi|^2)^{\alpha}}dt'd\xi \label{controle1}
\end{align}
that leads us to
\begin{align*}
&\mathbb{E}\bigg[\Big|\mathcal{F}^{-1}\Big(\big(1+|.|^2\big)^{-\frac{s}{2}}\mathcal{F}\Big(\rho [\<Psi>_{m,n}(t,.)-\<Psi>_{m,n}(s,.)]\Big)\Big)(x)\Big|^2\bigg]\\
&=\frac{1}{(2\pi)^{3d}} \iint_{(\mathbb{R}^d)^2}\frac{d\lambda d\lati}{(1+|\lambda|^2)^{\frac{s}{2}}(1+|\lati|^2)^{\frac{s}{2}}}e^{i \langle x, \lambda - \lati \rangle}\\
&\hspace{0,5cm}\Bigg[(t-s)\int_{B_{m,n}}\frac{\hat{\rho}(\lambda-\xi)\overline{\hat{\rho}(\lati-\xi)}}{(1+|\xi|^2)^{\alpha}}d\xi+\int_0^s \int_{B_{m,n}}\hat{\rho}(\lambda-\xi)\overline{\hat{\rho}(\lati-\xi)}\frac{\big|e^{i(t-t')|\xi|^2}-e^{i(s-t')|\xi|^2}\big|^2}{(1+|\xi|^2)^{\alpha}}dt'd\xi\Bigg].
\end{align*}
Using the hypercontractivity of Gaussian variables, we deduce that
\begin{align*}
&\mathbb{E}\bigg[\Big|\mathcal{F}^{-1}\Big(\big(1+|.|^2\big)^{-\frac{s}{2}}\mathcal{F}\Big(\rho [\<Psi>_{m,n}(t,.)-\<Psi>_{m,n}(s,.)]\Big)\Big)(x)\Big|^{2p}\bigg]\\
&\leq c_p \mathbb{E}\bigg[\Big|\mathcal{F}^{-1}\Big(\big(1+|.|^2\big)^{-\frac{s}{2}}\mathcal{F}\Big(\rho [\<Psi>_{m,n}(t,.)-\<Psi>_{m,n}(s,.)]\Big)\Big)(x)\Big|^2\bigg]^p
\end{align*}
and summoning Lemma \ref{compact}, we get the following estimate
\begin{align}
&\int_{\mathbb{R}^d}dx\,\mathbb{E}\bigg[\Big|\mathcal{F}^{-1}\Big(\big(1+|.|^2\big)^{-\frac{s}{2}}\mathcal{F}\Big(\rho [\<Psi>_{m,n}(t,.)-\<Psi>_{m,n}(s,.)]\Big)\Big)(x)\Big|^{2p}\bigg]\nonumber\\ 
&\lesssim \Bigg(\int_{B_{m,n}}\frac{d\xi}{(1+|\xi|^2)^{s}}\Bigg[\frac{t-s}{(1+|\xi|^2)^{\alpha}}+\int_0^s \frac{\big|e^{i(t-t')|\xi|^2}-e^{i(s-t')|\xi|^2}\big|^2}{(1+|\xi|^2)^{\alpha}}dt'\Bigg]\Bigg)^p=I_{m,n}^p.\label{Imnp}
\end{align}
Let $0<\varepsilon<1$. A straight application of the mean value theorem entails that
\begin{align*}
&I_{m,n}\lesssim \int_{B_{m,n}}\frac{d\xi}{(1+|\xi|^2)^{s}}\Bigg[\frac{t-s}{(1+|\xi|^2)^{\alpha}}+\int_0^s\frac{(t-s)^{\varepsilon}}{(1+|\xi|^2)^{\alpha-\varepsilon}}dt'\Bigg]\\
&\hspace{0,6cm}\lesssim (t-s)^{\varepsilon} \int_{B_{m,n}}\frac{d\xi}{(1+|\xi|^2)^{s+\alpha-\varepsilon}}\\
&\hspace{0,6cm}\lesssim \frac{(t-s)^{\varepsilon}}{n^{2\varepsilon}}\Bigg[1+\int_1^{+\infty}\frac{dr}{r^{2s+2\alpha-4\varepsilon-d+1}}\Bigg],
\end{align*}
where we have performed a hyperspherical change of variables to get the last inequality.
As $s>\frac{d}{2}-\alpha$, we can pick $\varepsilon>0$ small enough so that $2s+2\alpha-4\varepsilon-d+1>1$ and
\begin{align}
I_{m,n}\lesssim \frac{(t-s)^{\varepsilon}}{n^{2\varepsilon}}.\label{Imn}
\end{align}
Injecting (\ref{Imn}) into (\ref{Imnp}), we deduce the desired bound
$$\int_{\mathbb{R}^d}dx\,\mathbb{E}\bigg[\Big|\mathcal{F}^{-1}\Big(\big(1+|.|^2\big)^{-\frac{s}{2}}\mathcal{F}\Big(\rho [\<Psi>_{m,n}(t,.)-\<Psi>_{m,n}(s,.)]\Big)\Big)(x)\Big|^{2p}\bigg]\lesssim \frac{(t-s)^{\varepsilon p}}{n^{2\varepsilon p}},$$
for $\varepsilon>0$ small enough.\\
The end of the proof is classical and is the result of a combination of Kolmogorov's criterion and of the Garsia-Rodemich-Rumsey lemma (see \cite{GRR-ter}). The interested reader shall find the details in \cite{Schaeffer1, Schaeffer2}.
\end{proof}

\subsection{Construction of the second order stochastic process}

\begin{proof}[Proof of Proposition~\ref{cerise1}]
Let $\rho: \mathbb{R}^d \rightarrow \mathbb{R}$ be a $\mathcal{C}^{\infty}$ compactly-supported function and $2 \leq p <\infty$. Let $s$ and $t$ be two real numbers such that $0\leq s \leq t \leq T$. As in \cite{Schaeffer1, Schaeffer2}, our strategy is to obtain a bound of the form:
$$\int_{\mathbb{R}^d}dx\,\mathbb{E}\bigg[\Big|\mathcal{F}^{-1}\Big(\big(1+|.|^2\big)^{-s}\mathcal{F}\Big(\rho [\<Psi2>_{n}(t,.)-\<Psi2>_{n}(s,.)]\Big)\Big)(x)\Big|^{2p}\bigg]\lesssim (t-s)^{\varepsilon p},$$
where $\varepsilon>0$ is small enough and uniformly in $n$. Indeed, a small adaptation of the proof leads to similar results concerning the variation $\<Psi2>_{m,n}=\<Psi2>_m - \<Psi2>_n$, namely
$$\int_{\mathbb{R}^d}dx\,\mathbb{E}\bigg[\Big|\mathcal{F}^{-1}\Big(\big(1+|.|^2\big)^{-s}\mathcal{F}\Big(\rho [\<Psi2>_{m,n}(t,.)-\<Psi2>_{m,n}(s,.)]\Big)\Big)(x)\Big|^{2p}\bigg]\lesssim \frac{(t-s)^{\varepsilon p}}{n^{2\varepsilon p}}.$$
First of all, for every $x \in \mathbb{R}^d$, we write
\begin{align*}
&\mathbb{E}\bigg[\Big|\mathcal{F}^{-1}\Big(\big(1+|.|^2\big)^{-s}\mathcal{F}\Big(\rho [\<Psi2>_{n}(t,.)-\<Psi2>_{n}(s,.)]\Big)\Big)(x)\Big|^2\bigg]\\
&=\frac{1}{(2\pi)^{4d}} \iint_{(\mathbb{R}^d)^2}\frac{d\lambda d\lati}{(1+|\lambda|^2)^{s}(1+|\lati|^2)^{s}} e^{i \langle x, \lambda - \lati \rangle} \iint_{(\mathbb{R}^d)^2} d\beta d\betati \, \hat{\rho}(\lambda-\beta)\overline{\hat{\rho}(\lati-\betati)}\\
&\hspace{0,5cm}\mathbb{E}\bigg[\mathcal{F}\Big(\<Psi2>_{n}(t,.)-\<Psi2>_{n}(s,.)\Big)(\beta)\overline{\mathcal{F}\Big(\<Psi2>_{n}(t,.)-\<Psi2>_{n}(s,.)\Big)(\betati)}\bigg].
\end{align*}
We focus on the estimation of the latter mean.
\begin{align*}
&\mathbb{E}\bigg[\mathcal{F}\Big(\<Psi2>_{n}(t,.)-\<Psi2>_{n}(s,.)\Big)(\beta)\overline{\mathcal{F}\Big(\<Psi2>_{n}(t,.)-\<Psi2>_{n}(s,.)\Big)(\betati)}\bigg]\\
&=\int_{\mathbb{R}^d}dy \, e^{-i\langle \beta, y \rangle}\int_{\mathbb{R}^d}d\tilde{y}\, e^{i\langle \betati, \tilde{y}\rangle}\mathbb{E}\bigg[\big(\<Psi2>_{n}(t,y)-\<Psi2>_{n}(s,y)\big)\big(\overline{\<Psi2>_{n}(t,\tilde{y})-\<Psi2>_{n}(s,\tilde{y})}\big)\bigg].
\end{align*}
Resorting to Wick's formula (see Lemma \ref{wick formula}), we can expand $\mathbb{E}\bigg[\big(\<Psi2>_{n}(t,y)-\<Psi2>_{n}(s,y)\big)\big(\overline{\<Psi2>_{n}(t,\tilde{y})-\<Psi2>_{n}(s,\tilde{y})}\big)\bigg]$ in the following way
\begin{align}
&\mathbb{E}\bigg[\big(\<Psi2>_{n}(t,y)-\<Psi2>_{n}(s,y)\big)\big(\overline{\<Psi2>_{n}(t,\tilde{y})-\<Psi2>_{n}(s,\tilde{y})}\big)\bigg]=\nonumber\\
&\mathbb{E}\bigg[\big(\<Psi>_{n}(t,y)-\<Psi>_{n}(s,y)\big)\overline{\<Psi>_{n}(t,\tilde{y})}\bigg]\mathbb{E}\bigg[\overline{\<Psi>_{n}(t,y)}\<Psi>_{n}(t,\tilde{y})\bigg]+\mathbb{E}\bigg[\<Psi>_{n}(s,y)\overline{\<Psi>_{n}(t,\tilde{y})}\bigg]\mathbb{E}\bigg[\big(\overline{\<Psi>_{n}(t,y)-\<Psi>_{n}(s,y)}\big)\<Psi>_{n}(t,\tilde{y})\bigg]\nonumber\\
&+\mathbb{E}\bigg[\big(\<Psi>_{n}(s,y)-\<Psi>_{n}(t,y)\big)\overline{\<Psi>_{n}(s,\tilde{y})}\bigg]\mathbb{E}\bigg[\overline{\<Psi>_{n}(t,y)}\<Psi>_{n}(s,\tilde{y})\bigg]+\mathbb{E}\bigg[\<Psi>_{n}(s,y)\overline{\<Psi>_{n}(s,\tilde{y})}\bigg]\mathbb{E}\bigg[\big(\overline{\<Psi>_{n}(s,y)-\<Psi>_{n}(t,y)}\big)\<Psi>_{n}(s,\tilde{y})\bigg]\nonumber\\
&+\mathbb{E}\bigg[\big(\<Psi>_{n}(t,y)-\<Psi>_{n}(s,y)\big)\<Psi>_{n}(t,\tilde{y})\bigg]\mathbb{E}\bigg[\overline{\<Psi>_{n}(t,y)\<Psi>_{n}(t,\tilde{y})}\bigg]+\mathbb{E}\bigg[\<Psi>_{n}(s,y)\<Psi>_{n}(t,\tilde{y})\bigg]\mathbb{E}\bigg[\big(\overline{\<Psi>_{n}(t,y)-\<Psi>_{n}(s,y)\big)\<Psi>_{n}(t,\tilde{y})}\bigg]\nonumber\\
&+\mathbb{E}\bigg[\big(\<Psi>_{n}(s,y)-\<Psi>_{n}(t,y)\big)\<Psi>_{n}(s,\tilde{y})\bigg]\mathbb{E}\bigg[\overline{\<Psi>_{n}(t,y)\<Psi>_{n}(s,\tilde{y})}\bigg]+\mathbb{E}\bigg[\<Psi>_{n}(s,y)\<Psi>_{n}(s,\tilde{y})\bigg]\mathbb{E}\bigg[\big(\overline{\<Psi>_{n}(s,y)-\<Psi>_{n}(t,y)\big)\<Psi>_{n}(s,\tilde{y})}\bigg].\label{controlesp}
\end{align}
Consequently, we have to bound eight integrals. Since their treatments are quite the same, we will only detail the computations related to the first one. Namely, our aim is to control the quantity:
\begin{align}
&\mathbbm{I}=\frac{1}{(2\pi)^{4d}} \iint_{(\mathbb{R}^d)^2}\frac{d\lambda d\lati}{(1+|\lambda|^2)^{s}(1+|\lati|^2)^{s}} e^{i \langle x, \lambda - \lati \rangle} \iint_{(\mathbb{R}^d)^2} d\beta d\betati \, \hat{\rho}(\lambda-\beta)\overline{\hat{\rho}(\lati-\betati)}\nonumber\\
&\hspace{0,5cm}\int_{\mathbb{R}^d}dy \, e^{-i\langle \beta, y \rangle}\int_{\mathbb{R}^d}d\tilde{y}\, e^{i\langle \betati, \tilde{y}\rangle}\mathbb{E}\bigg[\big(\<Psi>_{n}(t,y)-\<Psi>_{n}(s,y)\big)\overline{\<Psi>_{n}(t,\tilde{y})}\bigg]\mathbb{E}\bigg[\overline{\<Psi>_{n}(t,y)}\<Psi>_{n}(t,\tilde{y})\bigg].\label{equality}
\end{align}
Expanding the previous mean thanks to Ito's isometry and Plancherel's identity, we obtain
\begin{align}
&\mathbb{E}\bigg[\big(\<Psi>_{n}(t,y)-\<Psi>_{n}(s,y)\big)\overline{\<Psi>_{n}(t,\tilde{y})}\bigg]\mathbb{E}\bigg[\overline{\<Psi>_{n}(t,y)}\<Psi>_{n}(t,\tilde{y})\bigg]=\Bigg[\int_s^t \int_{\mathbb{R}^d}a_{n}(t-t',y-x')\overline{a_{n}(t-t',\tilde{y}-x')}dt'dx'\nonumber\\
&+\int_0^s \int_{\mathbb{R}^d}\Big[a_{n}(t-t',y-x')-a_{n}(s-t',y-x')\Big]\overline{a_{n}(t-t',\tilde{y}-x')}dt'dx'\Bigg]\nonumber\\
&\hspace{8cm}\times\Bigg[\int_0^t \int_{\mathbb{R}^d}\overline{a_n(t-t',y-x')}a_n(t-t',\tilde{y}-x')dt'dx'\Bigg]\nonumber\\
&=\frac{1}{(2\pi)^{2d}}\Bigg[\int_s^t \int_{B_{n}}\frac{1}{(1+|\xi_1|^2)^{\alpha}}e^{-i\langle \xi_1, y-\tilde{y}\rangle}dt'd\xi_1+\int_0^s \int_{B_{n}}\frac{\big(e^{i(t-t')|\xi_1|^2}-e^{i(s-t')|\xi_1|^2}\big)}{(1+|\xi_1|^2)^{\alpha}}\nonumber\\
&\hspace{5,5cm}e^{-i(t-t')|\xi_1|^2}e^{-i\langle \xi_1, y-\tilde{y}\rangle}dt'd\xi_1\Bigg]\Bigg[\int_0^t \int_{B_{n}}\frac{1}{(1+|\xi_2|^2)^{\alpha}}e^{-i\langle \xi_2, y-\tilde{y}\rangle}dt'd\xi_2\Bigg]\label{Icerise}
\end{align}
and, consequently, injecting equality (\ref{Icerise}) into (\ref{equality}), we get that
\begin{align*}
&\mathbbm{I}=\frac{1}{(2\pi)^{4d}} \iint_{(\mathbb{R}^d)^2}\frac{d\lambda d\lati}{(1+|\lambda|^2)^{s}(1+|\lati|^2)^{s}} e^{i \langle x, \lambda - \lati \rangle}  \, \nonumber\\
&\hspace{1cm}\Bigg[(t-s)t \int_{B_{n}}\int_{B_{n}}\frac{\hat{\rho}(\lambda-(\xi_1+\xi_2))\overline{\hat{\rho}(\lati-(\xi_1+\xi_2))}}{(1+|\xi_1|^2)^{\alpha}(1+|\xi_2|^2)^{\alpha}}d\xi_1 d\xi_2\nonumber\\
&\hspace{1cm}+t\int_0^s \int_{B_{n}}\int_{B_{n}}\hat{\rho}(\lambda-(\xi_1+\xi_2))\overline{\hat{\rho}(\lati-(\xi_1+\xi_2))}\frac{\big(e^{i(t-t')|\xi_1|^2}-e^{i(s-t')|\xi_1|^2}\big)}{(1+|\xi_1|^2)^{\alpha}(1+|\xi_2|^2)^{\alpha}}e^{-i(t-t')|\xi_1|^2}dt'd\xi_1d\xi_2\Bigg].
\end{align*}
\noindent
Combining the hypercontractivity of Wiener chaoses (see Lemma \ref{chaos}) and Lemma \ref{compact}, we get the following estimate
\begin{align*}
\int_{\mathbb{R}^d}dx\,\mathbb{E}\bigg[\Big|\mathcal{F}^{-1}\Big(\big(1+|.|^2\big)^{-s}\mathcal{F}\Big(\rho [\<Psi2>_{n}(t,.)-\<Psi2>_{n}(s,.)]\Big)\Big)(x)\Big|^{2p}\bigg] 
\lesssim \Bigg(\sum_{k=1}^{8} I_k\Bigg)^p,
\end{align*}
where the eight integrals are the one related to the terms in (\ref{controlesp}). As above, let us focus on the treatment of the first term denoted by $I_1$. Let $0<\varepsilon<1$. A straight application of the mean value theorem entails that
\begin{align*}
&I_{1}= \Bigg[(t-s)t \int_{\mathbb{R}^d}\int_{\mathbb{R}^d}\frac{1}{(1+|\xi_1+\xi_2|^2)^{2s}}\frac{d\xi_1 d\xi_2}{(1+|\xi_1|^2)^{\alpha}(1+|\xi_2|^2)^{\alpha}}\nonumber\\
&\hspace{0,6cm}+ t\int_0^s \int_{\mathbb{R}^d}\int_{\mathbb{R}^d}\frac{1}{(1+|\xi_1+\xi_2|^2)^{2s}}\frac{\big|e^{i(t-t')|\xi_1|^2}-e^{i(s-t')|\xi_1|^2}\big|}{(1+|\xi_1|^2)^{\alpha}(1+|\xi_2|^2)^{\alpha}}dt'd\xi_1 d\xi_2\Bigg]\nonumber\\
&\hspace{0,6cm}\lesssim (t-s)^{\varepsilon} \iint_{(\mathbb{R}^d)^2}\frac{d\xi_1 d\xi_2}{(1+|\xi_1+\xi_2|^2)^{2s}}\frac{1}{(1+|\xi_1|^2)^{\alpha-\varepsilon}}\frac{1}{(1+|\xi_2|^2)^{\alpha}}\\
&\hspace{0,6cm}\lesssim (t-s)^{\varepsilon} \int_{\mathbb{R}^d}\frac{d\xi_1 }{(1+|\xi_1|^2)^{2s}}\int_{\mathbb{R}^d}\frac{d\xi_2}{(1+|\xi_1-\xi_2|^2)^{\alpha-\varepsilon}}\frac{1}{(1+|\xi_2|^2)^{\alpha}}\\
&\hspace{0,6cm}\lesssim (t-s)^{\varepsilon} \int_{\mathbb{R}^d}\frac{d\xi_1 }{(1+|\xi_1|^2)^{2s+2\alpha-\frac{d}{2}-\varepsilon}},
\end{align*}
where we have used Lemma \ref{convolution} to derive the last inequality for small $\varepsilon>0$ verifying $4\alpha-2\varepsilon >d$.
Now, resorting to a hyperspherical change of variables,
\begin{align*}
\int_{\mathbb{R}^d}\frac{d\xi_1 }{(1+|\xi_1|^2)^{2s+2\alpha-\frac{d}{2}-\varepsilon}}
\lesssim 1+\int_1^{+\infty}\frac{dr}{r^{4s+4\alpha-2\varepsilon-2d+1}}.
\end{align*}
As $s>\frac{d}{2}-\alpha$, we can pick $\varepsilon>0$ so that $4s+4\alpha-2\varepsilon-2d+1>1$ and $$I_{1}\lesssim (t-s)^{\varepsilon}.$$
The end of the proof is classical and is the result of a combination of Kolmogorov's criterion and of the Garsia-Rodemich-Rumsey lemma (see \cite{GRR-ter}). The interested reader shall find the details in \cite{Schaeffer1, Schaeffer2}.
\end{proof}

\subsection{Construction of the convolution with the second order stochastic process}

\begin{proof}[Proof of Proposition~\ref{cerise-ssj}]
Let $\rho: \mathbb{R}^d \rightarrow \mathbb{R}$ be a $\mathcal{C}^{\infty}$ compactly-supported function and $2 \leq p <\infty$. Let $s$ and $t$ be two real numbers such that $0\leq s \leq t \leq T$. Again, our objective is to obtain a bound of the form:
$$\int_{\mathbb{R}^d}dx\,\mathbb{E}\bigg[\Big|\mathcal{F}^{-1}\Big(\big(1+|.|^2\big)^{-s+\frac{\kappa}{2}}\mathcal{F}\Big(\rho \Big[\<IPsi2>_{n}(t,.)-\<IPsi2>_{n}(s,.)\Big]\Big)\Big)(x)\Big|^{2p}\bigg]\lesssim (t-s)^{\varepsilon p},$$
where $\varepsilon>0$ is small enough and uniformly in $n$. Indeed, a small adaptation of the proof leads to similar results concerning the variation $\<IPsi2>_{m,n}=\<IPsi2>_m - \<IPsi2>_n$, namely
$$\int_{\mathbb{R}^d}dx\,\mathbb{E}\bigg[\Big|\mathcal{F}^{-1}\Big(\big(1+|.|^2\big)^{-s+\frac{\kappa}{2}}\mathcal{F}\Big(\rho \Big[\<IPsi2>_{m,n}(t,.)-\<IPsi2>_{m,n}(s,.)\Big]\Big)\Big)(x)\Big|^{2p}\bigg]\lesssim \frac{(t-s)^{\varepsilon p}}{n^{2\varepsilon p}}.$$
First of all, for every $x \in \mathbb{R}^d$, we write
\begin{align}
&\mathbb{E}\bigg[\Big|\mathcal{F}^{-1}\Big(\big(1+|.|^2\big)^{-s+\frac{\kappa}{2}}\mathcal{F}\Big(\rho \Big[\<IPsi2>_{n}(t,.)-\<IPsi2>_{n}(s,.)\Big]\Big)\Big)(x)\Big|^2\bigg]\nonumber\\
&=\frac{1}{(2\pi)^{4d}} \iint_{(\mathbb{R}^d)^2}\frac{d\lambda d\lati}{(1+|\lambda|^2)^{s-\frac{\kappa}{2}}(1+|\lati|^2)^{s-\frac{\kappa}{2}}} e^{i \langle x, \lambda - \lati \rangle} \iint_{(\mathbb{R}^d)^2} d\beta d\betati \, \hat{\rho}(\lambda-\beta)\overline{\hat{\rho}(\lati-\betati)}\nonumber\\
&\hspace{4,7cm}\mathbb{E}\bigg[\mathcal{F}\Big(\<IPsi2>_{n}(t,.)-\<IPsi2>_{n}(s,.)\Big)(\beta)\overline{\mathcal{F}\Big(\<IPsi2>_{n}(t,.)-\<IPsi2>_{n}(s,.)\Big)(\betati)}\bigg]\nonumber\\
&=\frac{1}{(2\pi)^{4d}} \iint_{(\mathbb{R}^d)^2}\frac{d\lambda d\lati}{(1+|\lambda|^2)^{s-\frac{\kappa}{2}}(1+|\lati|^2)^{s-\frac{\kappa}{2}}} e^{i \langle x, \lambda - \lati \rangle} \iint_{(\mathbb{R}^d)^2} d\beta d\betati \, \hat{\rho}(\lambda-\beta)\overline{\hat{\rho}(\lati-\betati)}\nonumber\\
&\hspace{6,4cm}\mathbb{E}\bigg[\mathcal{F}\Big(\<IPsi2>_{n}(t,.)-\<IPsi2>_{n}(s,.)\Big)(\beta)\overline{\mathcal{F}\Big(\<IPsi2>_{n}(t,.)\Big)(\betati)}\bigg]\nonumber\\
&-\frac{1}{(2\pi)^{4d}} \iint_{(\mathbb{R}^d)^2}\frac{d\lambda d\lati}{(1+|\lambda|^2)^{s-\frac{\kappa}{2}}(1+|\lati|^2)^{s-\frac{\kappa}{2}}} e^{i \langle x, \lambda - \lati \rangle} \iint_{(\mathbb{R}^d)^2} d\beta d\betati \, \hat{\rho}(\lambda-\beta)\overline{\hat{\rho}(\lati-\betati)}\nonumber\\
&\hspace{6,4cm}\mathbb{E}\bigg[\mathcal{F}\Big(\<IPsi2>_{n}(t,.)-\<IPsi2>_{n}(s,.)\Big)(\be)\overline{\mathcal{F}\Big(\<IPsi2>_{n}(s,.)\Big)(\betati)}\bigg].\label{egality1}
\end{align}
Based on Definition \ref{cerisessj} of $\<IPsi2>_n$, it holds that, for every $\beta \in \mathbb{R}^d$,
\begin{align*}
&\mathcal{F}\Big(\<IPsi2>_{n}(t,.)-\<IPsi2>_{n}(s,.)\Big)(\beta)=-i\int_s^t e^{i(t-\tau)|\beta|^2}\mathcal{F}(\<Psi2>_n(\tau,.))(\beta)d\tau  \\
&\hspace{6cm}-i\int_0^s \Big(e^{i(t-\tau)|\beta|^2}-e^{i(s-\tau)|\beta|^2}\Big)\mathcal{F}(\<Psi2>_n(\tau,.))(\beta)d\tau.
\end{align*}
The previous identity entails that
\begin{align}
&\mathbb{E}\bigg[\mathcal{F}\Big(\<IPsi2>_{n}(t,.)-\<IPsi2>_{n}(s,.)\Big)(\beta)\overline{\mathcal{F}\Big(\<IPsi2>_{n}(t,.)\Big)(\betati)}\bigg]-\mathbb{E}\bigg[\mathcal{F}\Big(\<IPsi2>_{n}(t,.)-\<IPsi2>_{n}(s,.)\Big)(\beta)\overline{\mathcal{F}\Big(\<IPsi2>_{n}(s,.)\Big)(\betati)}\bigg]\nonumber\\
&=\int_s^t dt_1 \int_0^t dt_2 \, e^{i(t-t_1)|\beta|^2}e^{-i(t-t_2)|\betati|^2}\iint_{(\mathbb{R}^d)^2} dy d\tilde{y} \, e^{-i \langle \beta, y \rangle}e^{i \langle \betati, \tilde{y} \rangle} \mathbb{E}\Big[\<Psi2>_n(t_1,y)\overline{\<Psi2>_n(t_2,\tilde{y})}\Big]\nonumber\\
&+\int_0^s dt_1 \int_0^t dt_2 \, \Big(e^{i(t-t_1)|\beta|^2}-e^{i(s-t_1)|\beta|^2}\Big)e^{-i(t-t_2)|\betati|^2}\iint_{(\mathbb{R}^d)^2} dy d\tilde{y} \, e^{-i \langle \beta, y \rangle}e^{i \langle \betati, \tilde{y} \rangle} \mathbb{E}\Big[\<Psi2>_n(t_1,y)\overline{\<Psi2>_n(t_2,\tilde{y})}\Big]\nonumber\\
&-\int_s^t dt_1 \int_0^s dt_2 \, e^{i(t-t_1)|\beta|^2}e^{-i(s-t_2)|\betati|^2}\iint_{(\mathbb{R}^d)^2} dy d\tilde{y} \, e^{-i \langle \beta, y \rangle}e^{i \langle \betati, \tilde{y} \rangle} \mathbb{E}\Big[\<Psi2>_n(t_1,y)\overline{\<Psi2>_n(t_2,\tilde{y})}\Big]\nonumber\\
&-\int_0^s dt_1 \int_0^s dt_2 \, \Big(e^{i(t-t_1)|\beta|^2}-e^{i(s-t_1)|\beta|^2}\Big)e^{-i(s-t_2)|\betati|^2}\iint_{(\mathbb{R}^d)^2} dy d\tilde{y} \, e^{-i \langle \beta, y \rangle}e^{i \langle \betati, \tilde{y} \rangle} \mathbb{E}\Big[\<Psi2>_n(t_1,y)\overline{\<Psi2>_n(t_2,\tilde{y})}\Big].\label{egality2}
\end{align}
Resorting to Wick's formula (see Lemma \ref{wick formula}), we can write
\begin{align}
\mathbb{E}\Big[\<Psi2>_n(t_1,y)\overline{\<Psi2>_n(t_2,\tilde{y})}\Big]= \Big|\mathbb{E}\Big[\<Psi>_n(t_1,y)\overline{\<Psi>_n(t_2,\tilde{y})}\Big]\Big|^2+ \Big|\mathbb{E}\Big[\<Psi>_n(t_1,y)\<Psi>_n(t_2,\tilde{y})\Big]\Big|^2.\label{egality3}
\end{align}
\noindent
Consequently, combining (\ref{egality2}) and (\ref{egality3}), and coming back to (\ref{egality1}), we obtain that
\begin{align}
&\mathbb{E}\bigg[\Big|\mathcal{F}^{-1}\Big(\big(1+|.|^2\big)^{-s+\frac{\kappa}{2}}\mathcal{F}\Big(\rho \Big[\<IPsi2>_{n}(t,.)-\<IPsi2>_{n}(s,.)\Big]\Big)\Big)(x)\Big|^2\bigg]\nonumber\\
&=\frac{1}{(2\pi)^{4d}} \iint_{(\mathbb{R}^d)^2}\frac{d\lambda d\lati}{(1+|\lambda|^2)^{s-\frac{\kappa}{2}}(1+|\lati|^2)^{s-\frac{\kappa}{2}}} e^{i \langle x, \lambda - \lati \rangle} \iint_{(\mathbb{R}^d)^2} d\beta d\betati \, \hat{\rho}(\lambda-\beta)\overline{\hat{\rho}(\lati-\betati)}\nonumber\\
&\bigg[\int_s^t dt_1 \int_0^t dt_2 \, e^{i(t-t_1)|\beta|^2}e^{-i(t-t_2)|\betati|^2}\iint_{(\mathbb{R}^d)^2} dy d\tilde{y} \, e^{-i \langle \beta, y \rangle}e^{i \langle \betati, \tilde{y} \rangle} \Big|\mathbb{E}\Big[\<Psi>_n(t_1,y)\overline{\<Psi>_n(t_2,\tilde{y})}\Big]\Big|^2\nonumber\\
&+\int_s^t dt_1 \int_0^t dt_2 \, e^{i(t-t_1)|\beta|^2}e^{-i(t-t_2)|\betati|^2}\iint_{(\mathbb{R}^d)^2} dy d\tilde{y} \, e^{-i \langle \beta, y \rangle}e^{i \langle \betati, \tilde{y} \rangle} \Big|\mathbb{E}\Big[\<Psi>_n(t_1,y)\<Psi>_n(t_2,\tilde{y})\Big]\Big|^2\nonumber\\
&+\int_0^s dt_1 \int_0^t dt_2 \, \Big(e^{i(t-t_1)|\beta|^2}-e^{i(s-t_1)|\beta|^2}\Big)e^{-i(t-t_2)|\betati|^2}\iint_{(\mathbb{R}^d)^2} dy d\tilde{y} \, e^{-i \langle \beta, y \rangle}e^{i \langle \betati, \tilde{y} \rangle} \Big|\mathbb{E}\Big[\<Psi>_n(t_1,y)\overline{\<Psi>_n(t_2,\tilde{y})}\Big]\Big|^2\nonumber\\
&+\int_0^s dt_1 \int_0^t dt_2 \, \Big(e^{i(t-t_1)|\beta|^2}-e^{i(s-t_1)|\beta|^2}\Big)e^{-i(t-t_2)|\betati|^2}\iint_{(\mathbb{R}^d)^2} dy d\tilde{y} \, e^{-i \langle \beta, y \rangle}e^{i \langle \betati, \tilde{y} \rangle} \Big|\mathbb{E}\Big[\<Psi>_n(t_1,y)\<Psi>_n(t_2,\tilde{y})\Big]\Big|^2\nonumber\\
&-\int_s^t dt_1 \int_0^s dt_2 \, e^{i(t-t_1)|\beta|^2}e^{-i(s-t_2)|\betati|^2}\iint_{(\mathbb{R}^d)^2} dy d\tilde{y} \, e^{-i \langle \beta, y \rangle}e^{i \langle \betati, \tilde{y} \rangle} \Big|\mathbb{E}\Big[\<Psi>_n(t_1,y)\overline{\<Psi>_n(t_2,\tilde{y})}\Big]\Big|^2\nonumber\\
&-\int_s^t dt_1 \int_0^s dt_2 \, e^{i(t-t_1)|\beta|^2}e^{-i(s-t_2)|\betati|^2}\iint_{(\mathbb{R}^d)^2} dy d\tilde{y} \, e^{-i \langle \beta, y \rangle}e^{i \langle \betati, \tilde{y} \rangle} \Big|\mathbb{E}\Big[\<Psi>_n(t_1,y)\<Psi>_n(t_2,\tilde{y})\Big]\Big|^2\nonumber\\
&-\int_0^s dt_1 \int_0^s dt_2 \, \Big(e^{i(t-t_1)|\beta|^2}-e^{i(s-t_1)|\beta|^2}\Big)e^{-i(s-t_2)|\betati|^2}\iint_{(\mathbb{R}^d)^2} dy d\tilde{y} \, e^{-i \langle \beta, y \rangle}e^{i \langle \betati, \tilde{y} \rangle} \Big|\mathbb{E}\Big[\<Psi>_n(t_1,y)\overline{\<Psi>_n(t_2,\tilde{y})}\Big]\Big|^2\nonumber\\
&-\int_0^s dt_1 \int_0^s dt_2 \, \Big(e^{i(t-t_1)|\beta|^2}-e^{i(s-t_1)|\beta|^2}\Big)e^{-i(s-t_2)|\betati|^2}\iint_{(\mathbb{R}^d)^2} dy d\tilde{y} \, e^{-i \langle \beta, y \rangle}e^{i \langle \betati, \tilde{y} \rangle} \Big|\mathbb{E}\Big[\<Psi>_n(t_1,y)\<Psi>_n(t_2,\tilde{y})\Big]\Big|^2\bigg].\label{huit}
\end{align}
The latter computation shows that $\mathbb{E}\bigg[\Big|\mathcal{F}^{-1}\Big(\big(1+|.|^2\big)^{-s+\frac{\kappa}{2}}\mathcal{F}\Big(\rho \Big[\<IPsi2>_{n}(t,.)-\<IPsi2>_{n}(s,.)\Big]\Big)\Big)(x)\Big|^2\bigg]$ is a sum of eight integrals. Let us focus our attention on the two following since the treatment of the six others is quite the same:
\begin{align}
&I_1=\frac{1}{(2\pi)^{4d}} \iint_{(\mathbb{R}^d)^2}\frac{d\lambda d\lati}{(1+|\lambda|^2)^{s-\frac{\kappa}{2}}(1+|\lati|^2)^{s-\frac{\kappa}{2}}} e^{i \langle x, \lambda - \lati \rangle} \iint_{(\mathbb{R}^d)^2} d\beta d\betati \, \hat{\rho}(\lambda-\beta)\overline{\hat{\rho}(\lati-\betati)}\nonumber\\
&\hspace{0,5cm}\int_0^s dt_1 \int_0^t dt_2 \, \Big(e^{i(t-t_1)|\beta|^2}-e^{i(s-t_1)|\beta|^2}\Big)e^{-i(t-t_2)|\betati|^2} \iint_{(\mathbb{R}^d)^2} dy d\tilde{y} \, e^{-i \langle \beta, y \rangle}e^{i \langle \betati, \tilde{y} \rangle} \Big|\mathbb{E}\Big[\<Psi>_n(t_1,y)\overline{\<Psi>_n(t_2,\tilde{y})}\Big]\Big|^2\nonumber
\end{align}
and
\begin{align}
&I_2=\frac{1}{(2\pi)^{4d}} \iint_{(\mathbb{R}^d)^2}\frac{d\lambda d\lati}{(1+|\lambda|^2)^{s-\frac{\kappa}{2}}(1+|\lati|^2)^{s-\frac{\kappa}{2}}} e^{i \langle x, \lambda - \lati \rangle} \iint_{(\mathbb{R}^d)^2} d\beta d\betati \, \hat{\rho}(\lambda-\beta)\overline{\hat{\rho}(\lati-\betati)}\nonumber \\
&\hspace{0,5cm}\int_0^s dt_1 \int_0^t dt_2 \, \Big(e^{i(t-t_1)|\beta|^2}-e^{i(s-t_1)|\beta|^2}\Big)e^{-i(t-t_2)|\betati|^2}\iint_{(\mathbb{R}^d)^2} dy d\tilde{y} \, e^{-i \langle \beta, y \rangle}e^{i \langle \betati, \tilde{y} \rangle} \Big|\mathbb{E}\Big[\<Psi>_n(t_1,y)\<Psi>_n(t_2,\tilde{y})\Big]\Big|^2.\nonumber
\end{align}
\newline
$\bullet$ \underline{Study of $I_1$:}
Using Definition \ref{luxn} of the covariance function of $\<Psi>_n$, we get that
\begin{align}
&\Big|\mathbb{E}\Big[\<Psi>_n(t_1,y)\overline{\<Psi>_n(t_2,\tilde{y})}\Big]\Big|^2=\bigg|\min(t_1,t_2)\frac{1}{(2\pi)^d}\int_{B_n}\frac{e^{i(t_1-t_2)|\xi|^2}}{(1+|\xi|^2)^{\alpha}}e^{-i\langle \xi, y - \tilde{y}\rangle}d\xi\bigg|^2\nonumber\\
&=\min(t_1,t_2)^2\frac{1}{(2\pi)^{2d}}\int_{B_n} \int_{B_n}d\xi_1 d\xi_2 \frac{e^{i(t_1-t_2)(|\xi_1|^2-|\xi_2|^2)}}{(1+|\xi_1|^2)^{\alpha}(1+|\xi_2|^2)^{\alpha}}e^{-i\langle \xi_1 -\xi_2, y - \tilde{y}\rangle}.\label{egality4}
\end{align}
Injecting $(\ref{egality4})$ into the definition of $I_1$, it holds that
\begin{align}
&I_1=\frac{1}{(2\pi)^{4d}} \iint_{(\mathbb{R}^d)^2}\frac{d\lambda d\lati}{(1+|\lambda|^2)^{s-\frac{\kappa}{2}}(1+|\lati|^2)^{s-\frac{\kappa}{2}}} e^{i \langle x, \lambda - \lati \rangle}\, \nonumber\\
&\hspace{0,5cm}\int_{B_n} \int_{B_n}\, \hat{\rho}(\lambda-(\xi_1-\xi_2))\overline{\hat{\rho}(\lati-(\xi_1-\xi_2))} \frac{d\xi_1 d\xi_2}{(1+|\xi_1|^2)^{\alpha}(1+|\xi_2|^2)^{\alpha}}\nonumber\\
&\hspace{0,5cm}\int_0^s dt_1 \int_0^t dt_2 \, \Big(e^{i(t-t_1)|\xi_1-\xi_2|^2}-e^{i(s-t_1)|\xi_1-\xi_2|^2}\Big)e^{-i(t-t_2)|\xi_1-\xi_2|^2}\min(t_1,t_2)^2 e^{i(t_1-t_2)(|\xi_1|^2-|\xi_2|^2)}.\nonumber
\end{align}
\newline
$\bullet$ \underline{Study of $I_2$:}
Using Definition \ref{luxn} of the covariance function of $\<Psi>_n$, we get that
\begin{align}
&\Big|\mathbb{E}\Big[\<Psi>_n(t_1,y)\<Psi>_n(t_2,\tilde{y})\Big]\Big|^2=\bigg|\frac{1}{(2\pi)^d}\int_{0}^{\min(t_1,t_2)}\int_{B_n}\frac{e^{i(t_1+t_2-2t')|\xi|^2}}{(1+|\xi|^2)^{\alpha}}e^{-i\langle \xi, y - \tilde{y}\rangle}d\xi dt'\bigg|^2\nonumber\\
&=\frac{1}{(2\pi)^{2d}}\int_{0}^{\min(t_1,t_2)}\int_{0}^{\min(t_1,t_2)}dt_1'dt_2'\int_{B_n} \int_{B_n}d\xi_1 d\xi_2 \frac{e^{i(t_1+t_2)(|\xi_1|^2-|\xi_2|^2)}e^{-2i t_1'|\xi_1|^2}e^{2i t_2'|\xi_2|^2}}{(1+|\xi_1|^2)^{\alpha}(1+|\xi_2|^2)^{\alpha}}e^{-i\langle \xi_1 -\xi_2, y - \tilde{y}\rangle}.\label{egality5}
\end{align}
Injecting $(\ref{egality5})$ into the definition of $I_2$, it holds that
\begin{align}
&I_2=\frac{1}{(2\pi)^{4d}} \iint_{(\mathbb{R}^d)^2}\frac{d\lambda d\lati}{(1+|\lambda|^2)^{s-\frac{\kappa}{2}}(1+|\lati|^2)^{s-\frac{\kappa}{2}}} e^{i \langle x, \lambda - \lati \rangle}  \, \nonumber \\
&\hspace{0,5cm}\int_{B_n}\int_{B_n}\, \hat{\rho}(\lambda-(\xi_1-\xi_2))\overline{\hat{\rho}(\lati-(\xi_1-\xi_2))}\frac{d\xi_1 d\xi_2}{(1+|\xi_1|^2)^{\alpha}(1+|\xi_2|^2)^{\alpha}}\nonumber\\
&\hspace{0,5cm}\int_0^s dt_1 \int_0^t dt_2 \, \Big(e^{i(t-t_1)|\xi_1-\xi_2|^2}-e^{i(s-t_1)|\xi_1-\xi_2|^2}\Big)e^{-i(t-t_2)|\xi_1-\xi_2|^2}e^{i(t_1+t_2)(|\xi_1|^2-|\xi_2|^2)}\nonumber\\
&\hspace{7cm}\int_{0}^{\min(t_1,t_2)}\int_{0}^{\min(t_1,t_2)}dt_1'dt_2' \, e^{-2i t_1'|\xi_1|^2}e^{2i t_2'|\xi_2|^2}.\nonumber
\end{align}
\newline
$\bullet$ \underline{Come back to (\ref{huit}):}
Combining the hypercontractivity of Wiener chaoses (see Lemma \ref{chaos}) and Lemma \ref{compact}, we get the following estimate
\begin{align*}
\int_{\mathbb{R}^d}dx\,\mathbb{E}\bigg[\Big|\mathcal{F}^{-1}\Big(\big(1+|.|^2\big)^{-s+\frac{\kappa}{2}}\mathcal{F}\Big(\rho \Big[\<IPsi2>_{n}(t,.)-\<IPsi2>_{n}(s,.)\Big]\Big)\Big)(x)\Big|^{2p}\bigg] 
\lesssim \Bigg(\sum_{k=1}^{8} |\tilde{I_k}|\Bigg)^p,
\end{align*}
where the eight integrals are the one related to the terms in (\ref{huit}). As above, let us focus on the treatment of the two first terms denoted by $\tilde{I_1}$ and $\tilde{I_2}$.\\
\newline
$\bullet$ \underline{Bound on $\tilde{I_1}$:}
Two changes of variables lead to the following equalities:
\begin{align}
&\tilde{I_1}=\int_{B_n} \int_{B_n}\, \frac{d\xi_1 d\xi_2}{(1+|\xi_1-\xi_2|^2)^{2s-\kappa}(1+|\xi_1|^2)^{\alpha}(1+|\xi_2|^2)^{\alpha}}\nonumber\\
&\hspace{1cm}\int_0^s dt_1 \int_0^t dt_2 \, \Big(e^{i(t-t_1)|\xi_1-\xi_2|^2}-e^{i(s-t_1)|\xi_1-\xi_2|^2}\Big)e^{-i(t-t_2)|\xi_1-\xi_2|^2}\min(t_1,t_2)^2 e^{i(t_1-t_2)(|\xi_1|^2-|\xi_2|^2)}\nonumber\\
&=\int_{B_n} \int_{B_n}\, \frac{d\xi_1 d\xi_2}{(1+|\xi_1+\xi_2|^2)^{2s-\kappa}(1+|\xi_1|^2)^{\alpha}(1+|\xi_2|^2)^{\alpha}}\nonumber\\
&\hspace{1cm}\int_0^s dt_1 \int_0^t dt_2 \, \Big(e^{i(t-t_1)|\xi_1+\xi_2|^2}-e^{i(s-t_1)|\xi_1+\xi_2|^2}\Big)e^{-i(t-t_2)|\xi_1+\xi_2|^2}\min(t_1,t_2)^2 e^{i(t_1-t_2)(|\xi_1|^2-|\xi_2|^2)}\nonumber\\
&=\int_{|\xi_1-\xi_2|\leq n} \int_{B_n}\, \frac{d\xi_1 d\xi_2}{(1+|\xi_1|^2)^{2s-\kappa}(1+|\xi_1-\xi_2|^2)^{\alpha}(1+|\xi_2|^2)^{\alpha}}\nonumber\\
&\hspace{1cm}\int_0^s dt_1 \int_0^t dt_2 \, \Big(e^{i(t-t_1)|\xi_1|^2}-e^{i(s-t_1)|\xi_1|^2}\Big)e^{-i(t-t_2)|\xi_1|^2}\min(t_1,t_2)^2 e^{i(t_1-t_2)(|\xi_1-\xi_2|^2-|\xi_2|^2)}.\nonumber
\end{align}
Let us introduce the additional notation: $\kappa(\underline{\xi})=|\xi_1|^2-|\xi_1-\xi_2|^2+|\xi_2|^2$.
Then, treating separately the cases where $t_1 \leq t_2$ and $t_2<t_1$, performing an integration by part in the second term of the following sum and using the mean value theorem, we obtain:
\begin{align}
&\bigg|\int_0^s dt_1 \int_0^t dt_2 \, \Big(e^{i(t-t_1)|\xi_1|^2}-e^{i(s-t_1)|\xi_1|^2}\Big)e^{-i(t-t_2)|\xi_1|^2}\min(t_1,t_2)^2 e^{i(t_1-t_2)(|\xi_1-\xi_2|^2-|\xi_2|^2)}\bigg|\nonumber\\
&\leq \bigg|\int_0^s dt_1 \int_{t_1}^t dt_2 \, \Big(e^{i(t-t_1)|\xi_1|^2}-e^{i(s-t_1)|\xi_1|^2}\Big)e^{-i(t-t_2)|\xi_1|^2}t_1^2 e^{i(t_1-t_2)(|\xi_1-\xi_2|^2-|\xi_2|^2)}\bigg|\nonumber\\
&\hspace{0,5cm}+\bigg|\int_0^s dt_1 \int_0^{t_1} dt_2 \, \Big(e^{i(t-t_1)|\xi_1|^2}-e^{i(s-t_1)|\xi_1|^2}\Big)e^{-i(t-t_2)|\xi_1|^2}t_2^2 e^{i(t_1-t_2)(|\xi_1-\xi_2|^2-|\xi_2|^2)}\bigg|\nonumber\\
&= \bigg|\int_0^s dt_1 \Big(e^{i(t-t_1)|\xi_1|^2}-e^{i(s-t_1)|\xi_1|^2}\Big)t_1^2 e^{-it|\xi_1|^2}e^{it_1(|\xi_1-\xi_2|^2-|\xi_2|^2)} \int_{t_1}^t dt_2 \, e^{it_2\kappa(\underline{\xi})}\bigg|\nonumber\\
&\hspace{0,5cm}+\bigg|\int_0^s dt_1 \Big(e^{i(t-t_1)|\xi_1|^2}-e^{i(s-t_1)|\xi_1|^2}\Big)e^{-it|\xi_1|^2}e^{it_1(|\xi_1-\xi_2|^2-|\xi_2|^2)}\int_0^{t_1} dt_2 \, t_2^2 e^{it_2\kappa(\underline{\xi})}\bigg|\nonumber\\
&\lesssim \frac{(t-s)^{\varepsilon}}{(1+|\xi_1|^2)^{-\varepsilon}(1+|\kappa(\underline{\xi})|)},\nonumber
\end{align}
for all $0< \varepsilon<1$.\\
Finally,
$$|\tilde{I_1}|\lesssim (t-s)^{\varepsilon}\iint_{(\mathbb{R}^d)^2}\, \frac{d\xi_1 d\xi_2}{(1+|\xi_1|^2)^{2s-\kappa-\varepsilon}(1+|\xi_1-\xi_2|^2)^{\alpha}(1+|\xi_2|^2)^{\alpha}(1+|\kappa(\underline{\xi})|)}.$$
\newline
$\bullet$ \underline{Bound on $\tilde{I_2}$:}
Two changes of variables lead to the following equalities:
\begin{align}
&\tilde{I_2}=\int_{B_n} \int_{B_n}\, \frac{d\xi_1 d\xi_2}{(1+|\xi_1-\xi_2|^2)^{2s-\kappa}(1+|\xi_1|^2)^{\alpha}(1+|\xi_2|^2)^{\alpha}}\nonumber\\
&\hspace{3cm}\int_0^s dt_1 \int_0^t dt_2 \, \Big(e^{i(t-t_1)|\xi_1-\xi_2|^2}-e^{i(s-t_1)|\xi_1-\xi_2|^2}\Big)e^{-i(t-t_2)|\xi_1-\xi_2|^2}e^{i(t_1+t_2)(|\xi_1|^2-|\xi_2|^2)}\nonumber\\
&\hspace{7cm}\int_{0}^{\min(t_1,t_2)}\int_{0}^{\min(t_1,t_2)}dt_1'dt_2' \, e^{-2i t_1'|\xi_1|^2}e^{2i t_2'|\xi_2|^2}\nonumber\\
&=\int_{B_n} \int_{B_n}\, \frac{d\xi_1 d\xi_2}{(1+|\xi_1+\xi_2|^2)^{2s-\kappa}(1+|\xi_1|^2)^{\alpha}(1+|\xi_2|^2)^{\alpha}}\nonumber\\
&\hspace{3cm}\int_0^s dt_1 \int_0^t dt_2 \, \Big(e^{i(t-t_1)|\xi_1+\xi_2|^2}-e^{i(s-t_1)|\xi_1+\xi_2|^2}\Big)e^{-i(t-t_2)|\xi_1+\xi_2|^2}e^{i(t_1+t_2)(|\xi_1|^2-|\xi_2|^2)}\nonumber\\
&\hspace{7cm}\int_{0}^{\min(t_1,t_2)}\int_{0}^{\min(t_1,t_2)}dt_1'dt_2' \, e^{-2i t_1'|\xi_1|^2}e^{2i t_2'|\xi_2|^2}\nonumber\\
&=\int_{B_n} \int_{B_n}\, \frac{d\xi_1 d\xi_2}{(1+|\xi_1|^2)^{2s-\kappa}(1+|\xi_1-\xi_2|^2)^{\alpha}(1+|\xi_2|^2)^{\alpha}}\nonumber\\
&\hspace{3cm}\int_0^s dt_1 \int_0^t dt_2 \, \Big(e^{i(t-t_1)|\xi_1|^2}-e^{i(s-t_1)|\xi_1|^2}\Big)e^{-i(t-t_2)|\xi_1|^2}e^{i(t_1+t_2)(|\xi_1-\xi_2|^2-|\xi_2|^2)}\nonumber\\
&\hspace{7cm}\int_{0}^{\min(t_1,t_2)}\int_{0}^{\min(t_1,t_2)}dt_1'dt_2' \, e^{-2i t_1'|\xi_1-\xi_2|^2}e^{2i t_2'|\xi_2|^2}.\nonumber
\end{align}
Let us introduce the additional notation: $\kappa_2(\underline{\xi})=|\xi_1|^2+|\xi_1-\xi_2|^2-|\xi_2|^2$.
Then, treating separately the cases where $t_1 \leq t_2$ and $t_2<t_1$, changing the order of integration and using the mean value theorem, we obtain:
\begin{align}
&\bigg|\int_0^s dt_1 \int_0^t dt_2 \, \Big(e^{i(t-t_1)|\xi_1|^2}-e^{i(s-t_1)|\xi_1|^2}\Big)e^{-i(t-t_2)|\xi_1|^2}e^{i(t_1+t_2)(|\xi_1-\xi_2|^2-|\xi_2|^2)}\nonumber\\
&\hspace{8cm}\int_{0}^{\min(t_1,t_2)}\int_{0}^{\min(t_1,t_2)}dt_1'dt_2' \, e^{-2i t_1'|\xi_1-\xi_2|^2}e^{2i t_2'|\xi_2|^2}\bigg|\nonumber\\
&\leq \bigg|\int_0^s dt_1 \int_{t_1}^t dt_2 \, \Big(e^{i(t-t_1)|\xi_1|^2}-e^{i(s-t_1)|\xi_1|^2}\Big)e^{-i(t-t_2)|\xi_1|^2}e^{i(t_1+t_2)(|\xi_1-\xi_2|^2-|\xi_2|^2)}\nonumber\\
&\hspace{8cm}\int_{0}^{t_1}\int_{0}^{t_1}dt_1'dt_2' \, e^{-2i t_1'|\xi_1-\xi_2|^2}e^{2i t_2'|\xi_2|^2}\bigg|\nonumber\\
&+\bigg|\int_0^s dt_1 \int_0^{t_1} dt_2 \, \Big(e^{i(t-t_1)|\xi_1|^2}-e^{i(s-t_1)|\xi_1|^2}\Big)e^{-i(t-t_2)|\xi_1|^2}e^{i(t_1+t_2)(|\xi_1-\xi_2|^2-|\xi_2|^2)}\nonumber\\
&\hspace{8cm}\int_{0}^{t_2}\int_{0}^{t_2}dt_1'dt_2' \, e^{-2i t_1'|\xi_1-\xi_2|^2}e^{2i t_2'|\xi_2|^2}\bigg|\nonumber\\
&=\bigg|\int_0^s dt_1  \, \Big(e^{i(t-t_1)|\xi_1|^2}-e^{i(s-t_1)|\xi_1|^2}\Big)e^{-it|\xi_1|^2}e^{it_1(|\xi_1-\xi_2|^2-|\xi_2|^2)}\nonumber\\
&\hspace{8cm}\int_{0}^{t_1}\int_{0}^{t_1}dt_1'dt_2' \, e^{-2i t_1'|\xi_1-\xi_2|^2}e^{2i t_2'|\xi_2|^2}\int_{t_1}^t dt_2 \, e^{it_2\kappa_2(\underline{\xi})}\bigg|\nonumber\\
&\hspace{0,5cm}+\bigg|\int_0^s dt_1 \, \Big(e^{i(t-t_1)|\xi_1|^2}-e^{i(s-t_1)|\xi_1|^2}\Big)e^{-it|\xi_1|^2}e^{it_1(|\xi_1-\xi_2|^2-|\xi_2|^2)}\nonumber\\
&\hspace{8cm}\int_{0}^{t_1}\int_{0}^{t_1}dt_1'dt_2' \, e^{-2i t_1'|\xi_1-\xi_2|^2}e^{2i t_2'|\xi_2|^2}\int_{\max(t_1',t_2')}^{t_1} dt_2 \, e^{it_2\kappa_2(\underline{\xi})}\bigg|\nonumber\\
&\lesssim \frac{(t-s)^{\varepsilon}}{(1+|\xi_1|^2)^{-\varepsilon}(1+|\kappa_2(\underline{\xi})|)},\nonumber
\end{align}
for all $0< \varepsilon<1$.\\
Finally,
\begin{align}
&|\tilde{I_2}|\lesssim (t-s)^{\varepsilon}\iint_{(\mathbb{R}^d)^2}\, \frac{d\xi_1 d\xi_2}{(1+|\xi_1|^2)^{2s-\kappa-\varepsilon}(1+|\xi_1-\xi_2|^2)^{\alpha}(1+|\xi_2|^2)^{\alpha}(1+|\kappa_2(\underline{\xi})|)}\nonumber\\
&\hspace{0,5cm}=(t-s)^{\varepsilon}\iint_{(\mathbb{R}^d)^2}\, \frac{d\xi_1 d\xi_2}{(1+|\xi_1|^2)^{2s-\kappa-\varepsilon}(1+|\xi_1-\xi_2|^2)^{\alpha}(1+|\xi_2|^2)^{\alpha}(1+|\kappa(\underline{\xi})|)}.\nonumber
\end{align}
The end of the proof consists in showing that the integral 
$$\mathbbm{I}=\iint_{(\mathbb{R}^d)^2} \frac{d\xi_1 d\xi_2}{(1+|\xi_1|^2)^{2s-\kappa}(1+|\xi_1-\xi_2|^2)^{\alpha}(1+|\xi_2|^2)^{\alpha}(1+|\kappa(\underline{\xi})|)}$$
is finite.\\
We split $\mathbb{R}^d \times \mathbb{R}^d$ into two domains defined by
$$D_1=\{(\xi_1,\xi_2)\in \mathbb{R}^d\times \mathbb{R}^d,|\xi_1|\leq 1\}\quad \text{and} \quad D_2=\{(\xi_1,\xi_2)\in \mathbb{R}^d\times\mathbb{R}^d,|\xi_1|> 1\}$$
and we denote by $\mathbbm{I}_1$ and $\mathbbm{I}_2$ the associated integrals.\\
\newline
$\bullet $ \underline{Bound on $\mathbbm{I}_1$:}\\
It holds that:
\begin{align}
&\mathbbm{I}_1=\int_{|\xi_1|\leq 1} \int_{\mathbb{R}^d}\frac{d\xi_1 d\xi_2}{(1+|\xi_1|^2)^{2s-\kappa}(1+|\xi_1-\xi_2|^2)^{\alpha}(1+|\xi_2|^2)^{\alpha}(1+|\kappa(\underline{\xi})|)}\nonumber\\
&\hspace{0,7cm}\lesssim \int_{|\xi_1|\leq 1} \int_{\mathbb{R}^d}\frac{d\xi_1 d\xi_2}{(1+|\xi_1|^2)^{2s-\kappa}(1+|\xi_1-\xi_2|^2)^{\alpha}(1+|\xi_2|^2)^{\alpha}}\nonumber\\
&\hspace{0,7cm}\lesssim \int_{|\xi_1|\leq 1} \frac{d\xi_1}{(1+|\xi_1|^2)^{2s-\kappa}(1+|\xi_1|^2)^{2\alpha-\frac{d}{2}}}\nonumber\\
&\hspace{0,7cm}\lesssim 1 \label{I1}
\end{align}
where we have used Lemma \ref{convolution} with the condition $\frac{d}{4}<\alpha<\frac{d}{2}$ to derive the second inequality.\\
\newline
$\bullet $ \underline{Bound on $\mathbbm{I}_2$:}\\
We split $D_2$ into two domains defined by
$$D_2^{1}=\{(\xi_1,\xi_2)\in D_2,|\xi_1-\xi_2|< |\xi_2|\}\quad \text{and} \quad D_2^{2}=\{(\xi_1,\xi_2)\in D_2,|\xi_1-\xi_2|\geq |\xi_2|\},$$
and we denote by $\mathbbm{I}^{1}_{2}$ and $\mathbbm{I}^{2}_{2}$ the associated integrals.\\
\newline
$\bullet $ \underline{Bound on $\mathbbm{I}^{1}_2$:}\\
Let us observe that on $D_2^{1}$, the following bound holds true: $|\kappa(\underline{\xi})|\geq |\xi_1|^2$. Now, resorting to Lemma \ref{convolution} again, one has
\begin{align}
&\mathbbm{I}_2^{1}=\int_{|\xi_1|> 1} \int_{\mathbb{R}^d}\frac{\mathbbm{1}_{\{|\xi_1-\xi_2|< |\xi_2|\}}d\xi_1 d\xi_2}{(1+|\xi_1|^2)^{2s-\kappa}(1+|\xi_1-\xi_2|^2)^{\alpha}(1+|\xi_2|^2)^{\alpha}(1+|\kappa(\underline{\xi})|)}\nonumber\\
&\hspace{0,7cm}\lesssim \int_{|\xi_1|> 1} \int_{\mathbb{R}^d}\frac{d\xi_1 d\xi_2}{(1+|\xi_1|^2)^{2s-\kappa+1}(1+|\xi_1-\xi_2|^2)^{\alpha}(1+|\xi_2|^2)^{\alpha}}\nonumber\\
&\hspace{0,7cm}\lesssim \int_{|\xi_1|>1} \frac{d\xi_1}{(1+|\xi_1|^2)^{2s-\kappa+1+2\alpha-\frac{d}{2}}}\nonumber\\
&\hspace{0,7cm}\lesssim \int_{1}^{+\infty} \frac{dr}{r^{4s-2\kappa+2+4\alpha-2d+1}}\nonumber\\
&\hspace{0,7cm}\lesssim 1 \label{I21}
\end{align}
where we have used the fact that $4s-2\kappa+2+4\alpha-2d+1>1$ resulting from $s>\frac{d}{2}-\alpha$ to obtain the last inequality.\\
\newline
$\bullet $ \underline{Bound on $\mathbbm{I}^{2}_2$:}\\
Using the law of cosines, we write
$$\kappa(\underline{\xi})=|\xi_1|^2-|\xi_1-\xi_2|^2+|\xi_2|^2=2|\xi_1||\xi_2|\cos(\theta),$$
where $\theta=\angle(\xi_1,\xi_2) \in [0,2\pi[$.\\
We split $D_2^{2}$ into two domains defined by
$$D_2^{2,1}=\{(\xi_1,\xi_2)\in D_2^{2},|\cos(\theta)|\gtrsim 1\}\quad \text{and} \quad D_2^{2,2}=\{(\xi_1,\xi_2)\in D_2^{2},|\cos(\theta)|\ll 1\}$$
and we denote by $\mathbbm{I}^{2,1}_2$ and $\mathbbm{I}^{2,2}_2$ the associated integrals.\\
\newline
$\bullet $ \underline{Bound on $\mathbbm{I}^{2,1}_2$:}\\
The aim is to prove that the integral
$$\mathbbm{I}^{2,1}_2=\int_{|\xi_1|> 1} \int_{\mathbb{R}^d}\frac{\mathbbm{1}_{\{|\xi_1-\xi_2|\geq |\xi_2|\}}\mathbbm{1}_{\{|\cos(\theta)| \gtrsim 1\}}d\xi_1 d\xi_2}{(1+|\xi_1|^2)^{2s-\kappa}(1+|\xi_1-\xi_2|^2)^{\alpha}(1+|\xi_2|^2)^{\alpha}(1+|\kappa(\underline{\xi})|)}$$
is finite. 
By dyadically decomposing $\xi_2$ into $|\xi_2|\sim N_2$ for dyadic numbers $N_2$ (ie $N_2=2^j$ for $j\geq 1$ or $N_2=2^{-j}$ for $j\geq 1$), it holds that
\begin{align}
&\mathbbm{I}^{2,1}_2\lesssim \int_{|\xi_1|> 1}\sum_{\underset{\text{dyadic}}{N_2}}\int_{\mathbb{R}^d}\frac{\mathbbm{1}_{\{|\xi_2|\sim N_2\}}\mathbbm{1}_{\{|\xi_1-\xi_2|\geq |\xi_2|\}}d\xi_1d\xi_2}{(1+|\xi_1|^2)^{2s-\kappa}(1+|\xi_1-\xi_2|^2)^{\alpha}(1+|\xi_2|^2)^{\alpha}(1+|\xi_1||\xi_2|)}\nonumber\\
&\lesssim \int_{|\xi_1|> 1}\sum_{\underset{\text{dyadic}}{N_2}}\int_{\mathbb{R}^d}\frac{\mathbbm{1}_{\{|\xi_2|\sim N_2\}}d\xi_1d\xi_2}{(1+|\xi_1|^2)^{2s-\kappa}\max(|\xi_1|,N_2)^{2\alpha}(1+N_2^2)^{\alpha}(1+|\xi_1|N_2)},\nonumber\\
&\lesssim \int_{|\xi_1|> 1}d\xi_1\sum_{\underset{\text{dyadic}}{N_2}}\frac{N_2^d}{(1+|\xi_1|^2)^{2s-\kappa}\max(|\xi_1|,N_2)^{2\alpha}(1+N_2^2)^{\alpha}(1+|\xi_1|N_2)},
\label{I221c}
\end{align}
where we haved used the fact that $|\xi_1-\xi_2|\sim \max(|\xi_1|,|\xi_2|)$ to derive the second inequality.\\
Or, for $0<\varepsilon<1$,
\begin{align}
&\sum_{\underset{\text{dyadic}}{N_2}}\frac{N_2^d}{(1+|\xi_1|^2)^{2s-\kappa}\max(|\xi_1|,N_2)^{2\alpha}(1+N_2^2)^{\alpha}(1+|\xi_1|N_2)}\nonumber\\
&\lesssim\sum_{\underset{\text{dyadic}}{N_2}}\frac{N_2^d }{(1+|\xi_1|^2)^{2s-\kappa}\max(|\xi_1|,N_2)^{2\alpha}(1+N_2^2)^{\alpha}|\xi_1|^{1-\varepsilon}N_2^{1-\varepsilon}}\nonumber\\
&\lesssim\sum_{\underset{\text{dyadic}}{N_2}}\frac{N_2^{d-1+\varepsilon} }{(1+|\xi_1|^2)^{2s-\kappa}\max(|\xi_1|,N_2)^{2\alpha}(1+N_2^2)^{\alpha}|\xi_1|^{1-\varepsilon}}\nonumber\\
&\lesssim\sum_{\underset{\text{dyadic}}{N_2 \geq 1}}\frac{1 }{(1+|\xi_1|^2)^{2s-\kappa+\frac{1}{2}-\frac{\varepsilon}{2}}\max(|\xi_1|,N_2)^{2\alpha}N_2^{2\alpha-d+1-\varepsilon}}\nonumber\\
&\hspace{0,7cm}+\sum_{\underset{\text{dyadic}}{N_2 \leq 1}}\frac{N_2^{d-1+\varepsilon} }{(1+|\xi_1|^2)^{2s-\kappa+\alpha+\frac{1}{2}-\frac{\varepsilon}{2}}}\nonumber\\
&\lesssim \sum_{\underset{\text{dyadic}}{1 \leq N_2 < |\xi_1|}} \frac{1}{(1+|\xi_1|^2)^{2s-\kappa+\alpha+\frac{1}{2}-\frac{\varepsilon}{2}}N_2^{2\alpha-d+1-\varepsilon}} + \sum_{\underset{\text{dyadic}}{N_2 \geq |\xi_1|}}\frac{1}{(1+|\xi_1|^2)^{2s-\kappa+\frac{1}{2}-\frac{\varepsilon}{2}}N_2^{4\alpha-d+1-\varepsilon}}\label{sum1}\\
&\hspace{0,7cm}+\frac{1}{(1+|\xi_1|^2)^{2s-\kappa+\alpha+\frac{1}{2}-\frac{\varepsilon}{2}}}.\nonumber
\end{align}
Let us focus our attention on the first term in (\ref{sum1}). When $d=1$ or $d=2$, as $\alpha > \frac{d-1}{2}$, $2\alpha-d+1-\varepsilon \geq 0$ for $\varepsilon>0$ small enough and this term is bounded by
$$ \sum_{\underset{\text{dyadic}}{1 \leq N_2 < |\xi_1|}} \frac{1}{(1+|\xi_1|^2)^{2s-\kappa+\alpha+\frac{1}{2}-\frac{\varepsilon}{2}}N_2^{2\alpha-d+1-\varepsilon}}\lesssim \frac{\ln(|\xi_1|)}{(1+|\xi_1|^2)^{2s-\kappa+\alpha+\frac{1}{2}-\frac{\varepsilon}{2}}}.$$
Now, let us remark that, when $d=3$, $\varepsilon$ can be removed from the previous computations. When $\alpha \geq 1$, the following bound holds true
$$ \sum_{\underset{\text{dyadic}}{1 \leq N_2 < |\xi_1|}} \frac{1}{(1+|\xi_1|^2)^{2s-\kappa+\alpha+\frac{1}{2}}N_2^{2\alpha-d+1}}\lesssim \frac{\ln(|\xi_1|)}{(1+|\xi_1|^2)^{2s-\kappa+\alpha+\frac{1}{2}}},$$
whereas, when $\alpha < 1$, one has
$$ \sum_{\underset{\text{dyadic}}{1 \leq N_2 < |\xi_1|}} \frac{1}{(1+|\xi_1|^2)^{2s-\kappa+\alpha+\frac{1}{2}}N_2^{2\alpha-d+1}}\lesssim \frac{1}{(1+|\xi_1|^2)^{2s-\kappa+2\alpha+1-\frac{d}{2}}}.$$
Now, for all $1\leq d \leq 3$, as $\alpha > \frac{d-1}{4}$, $4\alpha-d+1-\varepsilon>0$ for $\varepsilon>0$ small enough and the second term in (\ref{sum1}) is bounded by
$$\sum_{\underset{\text{dyadic}}{N_2 \geq |\xi_1|}}\frac{1}{(1+|\xi_1|^2)^{2s-\kappa+\frac{1}{2}-\frac{\varepsilon}{2}}N_2^{4\alpha-d+1-\varepsilon}}\lesssim \frac{1}{(1+|\xi_1|^2)^{2s-\kappa+2\alpha+1-\frac{d}{2}-\varepsilon}}.$$
Finally, coming back to (\ref{I221c}), when $d=1$ or $d=2$,
\begin{align*}
\mathbbm{I}_{2}^{2,1}\lesssim \int_{|\xi_1|> 1} \frac{\ln(|\xi_1|)d\xi_1}{(1+|\xi_1|^2)^{2s-\kappa+\alpha+\frac{1}{2}-\frac{\varepsilon}{2}}}+ \int_{|\xi_1|> 1} \frac{d\xi_1}{(1+|\xi_1|^2)^{2s-\kappa+2\alpha+1-\frac{d}{2}-\varepsilon}} + \int_{|\xi_1|> 1} \frac{d\xi_1}{(1+|\xi_1|^2)^{2s-\kappa+\alpha+\frac{1}{2}-\frac{\varepsilon}{2}}}.
\end{align*}
When $d=3$ and $\alpha \geq 1$,
\begin{align*}
\mathbbm{I}_{2}^{2,1}\lesssim \int_{|\xi_1|> 1} \frac{\ln(|\xi_1|)d\xi_1}{(1+|\xi_1|^2)^{2s-\kappa+\alpha+\frac{1}{2}}}+ \int_{|\xi_1|> 1} \frac{d\xi_1}{(1+|\xi_1|^2)^{2s-\kappa+2\alpha+1-\frac{d}{2}}} + \int_{|\xi_1|> 1} \frac{d\xi_1}{(1+|\xi_1|^2)^{2s-\kappa+\alpha+\frac{1}{2}}},
\end{align*}
and, when $d=3$ and $\alpha < 1$,
\begin{align*}
\mathbbm{I}_{2}^{2,1}\lesssim \int_{|\xi_1|> 1} \frac{d\xi_1}{(1+|\xi_1|^2)^{2s-\kappa+2\alpha+1-\frac{d}{2}}} + \int_{|\xi_1|> 1} \frac{d\xi_1}{(1+|\xi_1|^2)^{2s-\kappa+\alpha+\frac{1}{2}}}.
\end{align*}
In all cases,  according to the value of $\kappa$ and since $s>\frac{d}{2}-\alpha$,
\begin{align}
\mathbbm{I}_{2}^{2,1}\lesssim 1 \label{I221}
\end{align}
as soon as  $\varepsilon>0$ is small enough.\\
\newline
$\bullet $ \underline{Bound on $\mathbbm{I}^{2,2}_2$:}\\
First of all, we can remark that, when $d=1$, $\mathbbm{I}^{2,2}_2=0$ since, for all $(\xi_1,\xi_2)\in \mathbb{R}^2$, $\cos(\angle(\xi_1,\xi_2))=~1$ or $-1$. Consequently, in the following, $d=2$ or $3$. As $|\cos(\theta)| \ll 1$, $\displaystyle\bigg|\frac{\pi}{2}-\theta\bigg| \ll1$ or $\displaystyle\bigg|\frac{3\pi}{2}-\theta\bigg| \ll1$. We will only deal with the case where $\displaystyle\bigg|\frac{\pi}{2}-\theta\bigg| \ll1$ since the other case is quite the same (it suffices to replace $\displaystyle\bigg|\frac{\pi}{2}-\theta\bigg|$ by $\displaystyle\bigg|\frac{3\pi}{2}-\theta\bigg|$ in all the computations). By dyadically decomposing $\displaystyle\bigg|\frac{\pi}{2}-\theta\bigg|\sim 2^{-k}$ for $k \geq 0$ and $\xi_2$ into $|\xi_2|\sim N_2$ for dyadic numbers $N_2$ (ie $N_2=2^j$ for $j\geq 1$ or $N_2=2^{-j}$ for $j\geq 1$) again, we get that
\begin{align}
&\mathbbm{I}^{2,2}_2\lesssim \int_{|\xi_1|> 1}\sum_{\underset{\text{dyadic}}{N_2}}\sum_{k=0}^{+\infty}\int_{\mathbb{R}^d}\frac{\mathbbm{1}_{\{|\xi_2|\sim N_2\}}\mathbbm{1}_{\{|\frac{\pi}{2}-\theta|\ll 1\}}\mathbbm{1}_{\{|\frac{\pi}{2}-\theta| \sim 2^{-k}\}}\mathbbm{1}_{\{|\xi_1-\xi_2|\geq |\xi_2|\}}d\xi_1d\xi_2}{(1+|\xi_1|^2)^{2s-\kappa}(1+|\xi_1-\xi_2|^2)^{\alpha}(1+|\xi_2|^2)^{\alpha}(1+|\kappa(\underline{\xi})|)}\nonumber\\
&\lesssim \int_{|\xi_1|> 1}\sum_{\underset{\text{dyadic}}{N_2}}\sum_{k=0}^{+\infty}\int_{\mathbb{R}^d}\frac{\mathbbm{1}_{\{|\xi_2|\sim N_2\}}\mathbbm{1}_{\{|\frac{\pi}{2}-\theta|\ll 1\}}\mathbbm{1}_{\{|\frac{\pi}{2}-\theta| \sim 2^{-k}\}}d\xi_1d\xi_2}{(1+|\xi_1|^2)^{2s-\kappa}\max(|\xi_1|,N_2)^{2\alpha}(1+N_2^2)^{\alpha}(1+|\xi_1|N_22^{-k})},\label{I222c}
\end{align}
where we haved used the fact that $|\xi_1-\xi_2|\sim \max(|\xi_1|,|\xi_2|)$ to derive the last inequality.\\
Suppose that $d=2$. Then, for every $\xi_1 \in \mathbb{R}^2$, we see that the set of possible $\xi_2$ with $|\xi_2|\sim N_2$, $\displaystyle\bigg|\frac{\pi}{2}-\theta\bigg|\ll 1$ and $\displaystyle\bigg|\frac{\pi}{2}-\theta\bigg| \sim 2^{-k}$ is included in an axially symmetric trapezoid $\mathcal{D}$ whose height is $\sim N_2$ and the top and bottom widths are $\sim2^{-k} N_2$ with an axis of symmetry given by $(\mathbb{R}\xi_1)^{\perp}$. Consequently, $\text{vol}(\mathcal{D})\sim N_2^2 2^{-k}$ and
$$\int_{\mathbb{R}^2}\mathbbm{1}_{\{|\xi_2|\sim N_2\}}\mathbbm{1}_{\{|\frac{\pi}{2}-\theta|\ll 1\}}\mathbbm{1}_{\{|\frac{\pi}{2}-\theta| \sim 2^{-k}\}} d\xi_2 \lesssim N_2^2 2^{-k}.$$
Suppose now that $d=3$. Then, this time, for every $\xi_1 \in \mathbb{R}^3$, the set of possible $\xi_2$ with $|\xi_2|\sim N_2$, $\displaystyle\bigg|\frac{\pi}{2}-\theta\bigg|\ll 1$ and $\displaystyle\bigg|\frac{\pi}{2}-\theta\bigg| \sim 2^{-k}$ is included in a cone $\mathcal{D}$ whose height is $\sim N_2$, whose radius  of base disc is $\sim2^{-k} N_2$ and that presents a symmetry given by the plan $(\mathbb{R}\xi_1)^{\perp}$. Thus, $\text{vol}(\mathcal{D})\sim N_2^3 2^{-2k}$ and
$$\int_{\mathbb{R}^3}\mathbbm{1}_{\{|\xi_2|\sim N_2\}}\mathbbm{1}_{\{|\frac{\pi}{2}-\theta|\ll 1\}}\mathbbm{1}_{\{|\frac{\pi}{2}-\theta| \sim 2^{-k}\}} d\xi_2 \lesssim N_2^3 2^{-2k}.$$
To sum up,
$$\int_{\mathbb{R}^d}\mathbbm{1}_{\{|\xi_2|\sim N_2\}}\mathbbm{1}_{\{|\frac{\pi}{2}-\theta|\ll 1\}}\mathbbm{1}_{\{|\frac{\pi}{2}-\theta| \sim 2^{-k}\}} d\xi_2 \lesssim N_2^d 2^{-(d-1)k}.$$
Coming back to (\ref{I222c}), we deduce that, for $0<\varepsilon<1$,
\begin{align}
&\sum_{\underset{\text{dyadic}}{N_2}}\sum_{k=0}^{+\infty}\int_{\mathbb{R}^d}\frac{\mathbbm{1}_{\{|\xi_2|\sim N_2\}}\mathbbm{1}_{\{|\frac{\pi}{2}-\theta| \sim 2^{-k}\}}d\xi_2}{(1+|\xi_1|^2)^{2s-\kappa}\max(|\xi_1|,N_2)^{2\alpha}(1+N_2^2)^{\alpha}(1+|\xi_1|N_22^{-k})}\nonumber\\
&\lesssim\sum_{\underset{\text{dyadic}}{N_2}}\sum_{k=0}^{+\infty}\frac{N_2^d 2^{-(d-1)k}}{(1+|\xi_1|^2)^{2s-\kappa}\max(|\xi_1|,N_2)^{2\alpha}(1+N_2^2)^{\alpha}|\xi_1|^{1-\varepsilon}N_2^{1-\varepsilon}2^{-k+\varepsilon k}}\nonumber\\
&\lesssim\sum_{\underset{\text{dyadic}}{N_2}}\frac{N_2^{d-1+\varepsilon} }{(1+|\xi_1|^2)^{2s-\kappa}\max(|\xi_1|,N_2)^{2\alpha}(1+N_2^2)^{\alpha}|\xi_1|^{1-\varepsilon}}.\nonumber\\
\end{align}
Now, we are dealing with the same sum as in the computations related to $\mathbbm{I}^{2,1}_2$. We can mimic the arguments to obtain that
\begin{align}
\mathbbm{I}^{2,2}_2 \lesssim 1.\label{I222}
\end{align}
\newline
$\bullet $ \underline{Come back to $\mathbbm{I}$:}\\
Combining the four bounds (\ref{I1}), (\ref{I21}), (\ref{I221}) and (\ref{I222}), we deduce that the integral $\mathbb{I}$ is finite. That concludes the proof.
\end{proof}

\section{Deterministic analysis of equation (\ref{dim-d-sch})}\label{point fixe}

Let $1 \leq d \leq 3$ be a space dimension and $T>0$ a positive time. Fix $\alpha$ a real number verifying $$\displaystyle\frac{d}{4}<\alpha<\frac{d}{2}.$$ 
The aim of this section is to establish the local well-posedness of equation (\ref{dim-d-sch}), that is to present the proof of Theorem~\ref{major}. Let us remember that our model is understood in the sense of Definition~\ref{sol-sch2}. In other words, we have to prove that there exists $v$ verifying the fixed-point equality
\begin{multline}\label{fixed-point}
v_t=e^{-it\Delta}\phi-i\int_0^t e^{-i(t-\tau)\Delta}(\rho^2 |v_{\tau}|^2)\, d\tau-i\int_0^t e^{-i(t-\tau)\Delta}((\rho \overline{v}_{\tau})\cdot(\rho \<Psi>_{\tau}))\, d\tau\\
-i\int_0^t e^{-i(t-\tau)\Delta}((\rho v_{\tau})\cdot(\overline{\rho \<Psi>_{\tau}}))\, d\tau+\rho^2\<IPsi2> , \quad t\in [0,T] \, ,
\end{multline}
where the processes $\rho \<Psi>$ and $\rho^2 \<IPsi2>$ are defined through Proposition~\ref{luxo1} and Proposition~\ref{cerise-ssj}. Recall that, for all $p \geq 2$,
$$\rho \<Psi> \in \mathcal{C}([0,T];\mathcal{W}^{-(\frac{d}{2}-\alpha)-\varepsilon,p}(\mathbb{R}^d)),$$
and that, thanks to multilinear smoothing,
$$\rho^2 \<IPsi2> \in  \mathcal{C}([0,T];\mathcal{W}^{-(d-2\alpha)-\varepsilon+ \kappa,p}(\mathbb{R}^d)),$$
for every $\varepsilon>0$ and where
\begin{equation*}
\kappa=
\left\{
\begin{array}{l}
1-\alpha \quad \text{if} \ d=1\\
\displaystyle\frac{3}{2}-\alpha \quad \text{if} \ d=2\\
2-\alpha \quad \text{if} \ d=3 \ \text{and} \ \alpha\geq 1 \\
1 \quad \text{if} \ d=3 \ \text{and} \ \alpha< 1 \  .
\end{array}
\right.
\end{equation*}
Fix $\varepsilon>0$ and let $s>0$ the real number defined by $s=s_{d,\alpha,\varepsilon}=(\frac{d}{2}-\alpha)+\varepsilon$.
In the following, we will use a deterministic approach to deal with equation (\ref{fixed-point}) and we will henceforth consider the pair $(\rho \<Psi>,\rho^2 \<IPsi2>)$ as a given element in the space
\begin{equation*}
\mathcal{E}_{s,\kappa}:=\bigcap_{2\leq p< \infty}\mathcal{C}([0,T];\mathcal{W}^{-s,p}(\mathbb{R}^d)) \times \mathcal{C}([0,T];\mathcal{W}^{-2s+\kappa,p}(\mathbb{R}^d))\, ,
\end{equation*}
and then try to solve the more general deterministic equation: for $(\luxor,\cherry) \in \mathcal{E}_{s,\kappa}$,
\begin{multline}\label{ccl-snls}
v_t=e^{-it\Delta}\phi-i\int_0^t e^{-i(t-\tau)\Delta}(\rho^2 |v_{\tau}|^2)\, d\tau-i\int_0^t e^{-i(t-\tau)\Delta}((\rho \overline{v}_{\tau})\cdot(\luxor_{\tau}))\, d\tau\\
-i\int_0^t e^{-i(t-\tau)\Delta}((\rho v_{\tau})\cdot(\overline{\luxor_{\tau}}))\, d\tau+\cherry , \quad t\in [0,T] \, .
\end{multline}
\\ 
In the next subsection, we present the different tools that will permit us to deal with each term of the above equation.

\subsection{Technical lemmas}

\subsubsection{About the Schr\"{o}dinger operator}
Before stating the well-known Strichartz inequalities, let us define the notion of Schr\"{o}dinger admissible pair.
\begin{definition}
A pair $(p,q) \in [2,+\infty]^2$ is said to be Schr\"{o}dinger admissible if $$ (p,q,d)\neq (2,+\infty,2)\hspace{0,2cm}\text{and}\hspace{0,2cm} \frac{2}{p}+\frac{d}{q}=\frac{d}{2}.$$ 
\end{definition}

\begin{lemma}[Strichartz inequalities, see~{\cite[Paragraph 2.3]{caz-ter}}]\label{lem:strichartz2}
Fix $d\geq 1$ a space dimension and $s \in \mathbb{R}$. Let $u$ stand for the mild solution of equation
\begin{equation*}
\left\{
\begin{array}{l}
i\partial_t u(t,x)-\Delta u(t,x)= F(t,x) \, , \quad  t\in [0,T] \, , \, x\in \R^d \, ,\\
u(0,x)=\phi(x)\, .
\end{array}
\right.
\end{equation*}
Then, for all Schr\"{o}dinger admissible pairs $(p,q)$ and $(a,b)$,  it holds that
\begin{equation*}
\|u\|_{L^p([0,T]; \mathcal{W}^{s,q}(\mathbb{R}^{d}))}\lesssim \|\phi\|_{H^s(\mathbb{R}^d)}+\|F\|_{L^{a'}([0,T];\mathcal{W}^{s,b'}(\mathbb{R}^d))}\, ,
\end{equation*}
where the notations $a',b'$ refer to the H{\"o}lder conjugates of $a,b$.
\end{lemma}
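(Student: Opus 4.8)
The plan is simply to recall that this is the classical family of Strichartz estimates for the free Schr\"odinger propagator, whose proof is by now entirely standard; the complete argument can be found in \cite[Paragraph 2.3]{caz-ter}, and I only indicate here the main steps.

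\textbf{Step 1: dispersive estimate and interpolation.} First I would record the pointwise-in-time dispersive bound $\|e^{-it\Delta}f\|_{L^\infty(\R^d)}\lesssim |t|^{-d/2}\|f\|_{L^1(\R^d)}$, valid for $t\neq 0$, which follows at once from the explicit convolution kernel of $e^{-it\Delta}$. Interpolating this with the unitarity of $e^{-it\Delta}$ on $L^2(\R^d)$ yields $\|e^{-it\Delta}f\|_{L^q(\R^d)}\lesssim |t|^{-d(\frac12-\frac1q)}\|f\|_{L^{q'}(\R^d)}$ for every $2\leq q\leq\infty$, where $q'$ denotes the H\"older conjugate of $q$.

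\textbf{Step 2: the $TT^{*}$ argument.} For an admissible pair $(p,q)$, set $U\phi(t)=e^{-it\Delta}\phi$; then $UU^{*}F(t)=\int_{\R}e^{-i(t-\tau)\Delta}F(\tau)\,d\tau$, and the $L^p_tL^q_x$ norm of this expression is controlled, through Step 1 together with the one–dimensional Hardy--Littlewood--Sobolev inequality in the time variable — here the admissibility relation $\frac2p+\frac dq=\frac d2$ is exactly what makes the exponents match — by $\|F\|_{L^{p'}_tL^{q'}_x}$. By duality this gives the homogeneous estimate $\|e^{-it\Delta}\phi\|_{L^p([0,T];L^q(\R^d))}\lesssim\|\phi\|_{L^2(\R^d)}$, and more generally, combining the two admissible pairs $(p,q)$ and $(a,b)$, the non-retarded inhomogeneous estimate with the full integral $\int_0^T$. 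The exclusion $(p,q,d)\neq(2,\infty,2)$ is precisely the forbidden endpoint of Hardy--Littlewood--Sobolev in dimension two; in dimension $d\geq 3$ the diagonal endpoint $(p,q)=(2,\tfrac{2d}{d-2})$ is the Keel--Tao theorem.

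\textbf{Step 3: retarded estimate and Sobolev upgrade.} To pass from the non-retarded bound to the one involving $\int_0^t e^{-i(t-\tau)\Delta}F(\tau)\,d\tau$ — which is the Duhamel term appearing in the mild formulation — one invokes the Christ--Kiselev lemma, applicable since $p>a'$ in every admissible configuration (the borderline equality case being treated directly). Finally, since the Bessel potential $\langle\nabla\rangle^{s}$ commutes with $e^{-it\Delta}$ and hence with the Duhamel operator, applying it throughout transfers the $L^q$-level inequalities to the $\cw^{s,q}(\R^d)$-level inequalities asserted in the lemma. The only genuinely delicate point in this scheme is the endpoint bookkeeping of Step 2; for the dimensions relevant to this article, $1\leq d\leq 3$, it is either explicitly excluded in the statement or covered by \cite{caz-ter} via the Keel--Tao argument, so nothing further is required here.
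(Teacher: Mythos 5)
Your sketch is correct: the paper gives no proof of this lemma, relying entirely on the citation to \cite[Paragraph 2.3]{caz-ter}, and your outline (dispersive estimate, $TT^{*}$ with Hardy--Littlewood--Sobolev, Christ--Kiselev or Keel--Tao for the retarded/endpoint cases, then commuting $\langle\nabla\rangle^{s}$ with the propagator) is precisely the standard argument behind that reference. Nothing further is needed.
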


\begin{remark}
Strichartz inequality is a fundamental tool to deal with nonlinear Schr\"{o}dinger equations. It presents the advantage to provide a gain of integrability but the drawback that the solution inherit the (potentially bad) regularity of the initial condition and of the second member $F$. 
\end{remark}
The Schr\"{o}dinger operator can generate a local gain of regularity. This latter is described in \cite{Schaeffer1} where the authors have generalized the so-called Kato smoothing effect that offers locally a gain of a half of a derivative (understood in the Sobolev meaning).

\begin{lemma}\label{local-regu}
Fix $d\geq 1$ a space dimension. Let $\rho:\R^d \to \R$ be of the form~\eqref{rho}, $0\leq s,\eta \leq\frac{1}{2}$ and $0\leq T\leq 1$. Suppose that $\phi \in H^{-s}(\mathbb{R}^d)$ and $F \in L^1([0,T];H^{-s}(\mathbb{R}^d))$. Let $u$ stand for the mild solution of equation 
\begin{equation*}
\left\{
\begin{array}{l}
i\partial_t u(t,x)-\Delta u(t,x)= F(t,x) \, , \quad  t\in [0,T] \, , \, x\in \R^d \, ,\\
u(0,x)=\phi(x) \, .
\end{array}
\right.
\end{equation*}
Then, it holds that 
\begin{equation*}
\|u\|_{L^{\frac{1}{\eta}}([0,T]; H^{-s+\eta}_\rho(\mathbb{R}^d))}\lesssim \|\phi\|_{H^{-s}(\mathbb{R}^d)}+\|F\|_{L^1([0,T]; H^{-s}(\mathbb{R}^d))}\, ,
\end{equation*}
where the proportional constant only depends on $\rho$, $s$ and $\eta$.
\end{lemma}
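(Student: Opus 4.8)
The plan is to first peel off the inhomogeneous term by Duhamel's formula and Minkowski's integral inequality, thereby reducing everything to the homogeneous estimate
$$\big\|e^{-it\Delta}\psi\big\|_{L^{1/\eta}([0,T'];H^{-s+\eta}_\rho(\R^d))}\lesssim \|\psi\|_{H^{-s}(\R^d)},$$
required to hold for all $\psi\in H^{-s}$ and all $0\le T'\le 1$ with a constant independent of $T'$. Indeed, writing $u_t=e^{-it\Delta}\phi-i\int_0^t e^{-i(t-\tau)\Delta}F(\tau)\,d\tau$, one applies this bound to $\phi$ on $[0,T]$ and to each $F(\tau)$ on the shifted interval $[0,T-\tau]$ (the free group being a one–parameter unitary group, $e^{-i(t-\tau)\Delta}$ for $t\in[\tau,T]$ is the free evolution over $[0,T-\tau]$), and then integrates in $\tau$ and invokes Minkowski to recover exactly the claimed inhomogeneous estimate. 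Next, since $\langle\nabla\rangle^{-s}$ is a Fourier multiplier it commutes with both $e^{-it\Delta}$ and $\langle\nabla\rangle^{\eta}$, and $\|f\|_{H^{-s+\eta}_\rho}=\|\rho\,\langle\nabla\rangle^{\eta}(\langle\nabla\rangle^{-s}f)\|_{L^2}$; setting $g=\langle\nabla\rangle^{-s}\psi$, for which $\|g\|_{L^2}=\|\psi\|_{H^{-s}}$, the homogeneous estimate reduces to the case $s=0$, namely
$$\big\|\rho\,\langle\nabla\rangle^{\eta}e^{-it\Delta}g\big\|_{L^{1/\eta}([0,T'];L^2(\R^d))}\lesssim \|g\|_{L^2(\R^d)},\qquad g\in L^2(\R^d).$$

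I would prove this last bound by complex (Stein) interpolation between the two endpoints $\eta=0$ and $\eta=\tfrac12$. For $\eta=0$ it is immediate from unitarity of the Schr\"odinger group and boundedness of $\rho$: $\|\rho\,e^{-it\Delta}g\|_{L^\infty_t L^2_x}\le\|\rho\|_{L^\infty}\|g\|_{L^2}$. For $\eta=\tfrac12$ it is the local (Kato) smoothing estimate $\|\rho\,\langle\nabla\rangle^{1/2}e^{-it\Delta}g\|_{L^2([0,T'];L^2(\R^d))}\lesssim\|g\|_{L^2}$: splitting $g$ into low and high frequencies, on $\{|\xi|\le1\}$ one uses Bernstein's inequality together with the compact support of $\rho$ and $T'\le1$, while on $\{|\xi|>1\}$ one has $\langle\nabla\rangle^{1/2}\sim|\nabla|^{1/2}$ and invokes the classical local smoothing inequality on $\R$ with the weight $\langle x\rangle^{-1/2-\varepsilon}$ (legitimate since the compact support of $\rho$ gives $|\rho(x)|\lesssim\langle x\rangle^{-1/2-\varepsilon}$); in the product setting $\rho=\rho_1\cdots\rho_d$ this smoothing can alternatively be obtained by tensorizing the one–dimensional Kato estimate, which is where hypothesis~\eqref{rho} enters, and we refer to \cite{Schaeffer1} for the details. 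To interpolate, introduce the analytic family $T_z g(t,x)=\rho(x)\big(\langle\nabla\rangle^{z/2}e^{-it\Delta}g\big)(x)$ on the strip $0\le\Re z\le1$, which — for $g$ Schwartz and tested against Schwartz functions — is analytic in $z$ and has admissible growth on vertical lines. The two endpoint bounds read $\|T_{iy}g\|_{L^\infty_t L^2_x}\lesssim\|g\|_{L^2}$ and $\|T_{1+iy}g\|_{L^2_t L^2_x}\lesssim\|g\|_{L^2}$ uniformly in $y\in\R$, and using $\big(L^\infty([0,T'];L^2),L^2([0,T'];L^2)\big)_{[\theta]}=L^{2/\theta}([0,T'];L^2)$, Stein's complex interpolation theorem for analytic families of operators (in the scale of mixed-norm Lebesgue spaces), applied at $\theta=2\eta$, yields $\|T_{2\eta}g\|_{L^{1/\eta}_t L^2_x}\lesssim\|g\|_{L^2}$, which is precisely the desired bound because $T_{2\eta}g=\rho\,\langle\nabla\rangle^{\eta}e^{-it\Delta}g$.

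The main obstacle is the endpoint case $\eta=\tfrac12$, i.e.\ the Kato smoothing estimate for the Schr\"odinger propagator with the (compactly supported, product) weight $\rho$: this is the only genuinely nontrivial input, the Duhamel/Minkowski reduction, the passage to $s=0$, and the interpolation machinery all being routine. A secondary point requiring care is uniformity of all constants in $T\le1$; this is automatic here, since the $\eta=0$ endpoint is $T$–independent and the $\eta=\tfrac12$ endpoint is established either on the whole line or with an explicit harmless factor $T^{1/2}$, so interpolation preserves uniformity and the shifted intervals coming from Duhamel cause no loss.
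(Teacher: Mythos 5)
The paper itself does not prove Lemma \ref{local-regu}: it is imported from \cite{Schaeffer1}, where the bound is obtained by tensorizing the one-dimensional Kato smoothing effect (this is precisely where the product form \eqref{rho} is used) and interpolating with the trivial $L^\infty_T H^{-s}$ estimate. Your proposal is correct and follows essentially that same strategy: the Duhamel/Minkowski step is legitimate (you only need Minkowski since the time exponent $1/\eta\ge 2$ exceeds $1$, so no Christ--Kiselev-type issue arises, and the shifted intervals stay inside $[0,1]$ so uniformity of constants is preserved); conjugating by the Fourier multiplier $\langle\nabla\rangle^{-s}$ reduces matters to $s=0$; and Stein interpolation between the unitary endpoint $\eta=0$ and the Kato endpoint $\eta=\tfrac12$ is valid, the vertical-line bounds being uniform because $\langle\nabla\rangle^{iy/2}$ is unitary on $L^2$ and commutes with $e^{-it\Delta}$ and with $\langle\nabla\rangle^{1/2}$. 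The one genuinely different ingredient is your first route to the $\eta=\tfrac12$ endpoint via the classical $\langle x\rangle^{-1/2-\varepsilon}$-weighted smoothing estimate: after writing $\langle\nabla\rangle^{1/2}$ on high frequencies as a bounded multiplier times $|\nabla|^{1/2}$ (which commutes with the flow) and dominating the compactly supported $\rho$ by the weight, this argument never uses the product hypothesis \eqref{rho}, so it is in fact slightly more general than the tensorization proof of \cite{Schaeffer1}, which you correctly identify as the alternative where \eqref{rho} enters. The trade-off is that it invokes the higher-dimensional weighted smoothing theorem (Constantin--Saut/Sj\"olin/Vega type) instead of only the elementary one-dimensional Kato estimate; either input is standard, and with it your proof is complete.
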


Let us go back to equation (\ref{ccl-snls}). We see that, compared with \cite{Schaeffer1}, the main difference lies in the fact that the worst term between $\displaystyle -i\int_0^t e^{-i(t-\tau)\Delta}(\luxor_{\tau})\, d\tau$ and $\cherry$ is now $\displaystyle -i\int_0^t e^{-i(t-\tau)\Delta}(\luxor_{\tau})\, d\tau$ with its $-s+\eta$ derivatives (according to the previous local smoothing). Consequently, here, $v$ is expected to inherit the regularity of $\displaystyle -i\int_0^t e^{-i(t-\tau)\Delta}(\luxor_{\tau})\, d\tau$ and should be an element of $\mathcal{C}([0,T];H^{-s+\eta}(\mathbb{R}^d))$. Precisely, by resorting to the local smoothing of the Schr\"{o}dinger operator (see Lemma \ref{local-regu}), we will be able to prove that, up to multiplication by $\rho$, $v \in \mathcal{C}([0,T];\mathcal{W}^{-s+\eta,p}(\mathbb{R}^d))$ (for some $p \geq 2$) with $\eta>0$ such that $-s+\eta>0$. Thus, for all $0 \leq t \leq T$, $v(t)$ will be locally a function, allowing us to give a rigorous meaning to $\rho^2|v|^2$.

\subsubsection{About the product in Sobolev spaces}

The following lemma states that the product of two functions of regularity $s \geq 0$ stays a function of regularity $s \geq 0$ and precises its integrability under an H\"{o}lder type condition. See for instance {\cite[Proposition 1.1, p. 105]{Taylor-ter}}.
\begin{lemma}[Fractional Leibniz rule]\label{frac-leibniz}
Let $s\geq 0$, $1<r<\infty$ and $1<p_1,p_2,q_1,q_2< \infty$ satisfying 
$$\frac{1}{r}=\frac{1}{p_1}+\frac{1}{p_2}=\frac{1}{q_1}+\frac{1}{q_2} \, .$$ 
Then, it holds that
$$\|u\cdot v\|_{\mathcal{W}^{s,r}(\mathbb{R}^d)}\lesssim \|u\|_{\mathcal{W}^{s,p_1}(\mathbb{R}^d)}\|v\|_{L^{p_2}(\mathbb{R}^d)}+\|u\|_{L^{q_1}(\mathbb{R}^d)}\|v\|_{\mathcal{W}^{s,q_2}(\mathbb{R}^d)} \, .$$
\end{lemma}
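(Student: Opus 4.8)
The plan is to reduce the estimate to a Littlewood--Paley (Bony) paraproduct decomposition. First I would dispose of the trivial case $s=0$, where the claimed bound is merely H\"older's inequality $\|uv\|_{L^r}\le \|u\|_{L^{p_1}}\|v\|_{L^{p_2}}$, so from now on $s>0$. The starting point is the square-function characterization $\|f\|_{\mathcal{W}^{s,r}(\mathbb{R}^d)}\sim \big\|\big(\textstyle\sum_{j\ge 0}2^{2js}|\Delta_j f|^2\big)^{1/2}\big\|_{L^r(\mathbb{R}^d)}$, valid for $1<r<\infty$, where $(\Delta_j)_{j\ge 0}$ is an inhomogeneous dyadic decomposition (the block $\Delta_0$ absorbing all low frequencies) and $\langle\nabla\rangle^s$ is comparable to the weighted square function; this rests on the Mikhlin multiplier theorem together with the Fefferman--Stein vector-valued maximal inequality, and is exactly the place where the restriction $1<r<\infty$ is used.

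Next I would write $u=\sum_j\Delta_j u$, $v=\sum_k\Delta_k v$ and split $uv=\pi_1(u,v)+\pi_2(u,v)+\pi_3(u,v)$ into the low--high paraproduct $\pi_1=\sum_k S_{k-2}u\,\Delta_k v$, the high--low paraproduct $\pi_2=\sum_j\Delta_j u\,S_{j-2}v$, and the resonant term $\pi_3=\sum_{|j-k|\le 2}\Delta_j u\,\Delta_k v$. For $\pi_1$, the $k$-th summand is frequency-localized in an annulus of size $\sim 2^k$, so only $O(1)$ values of $\ell$ contribute to $\Delta_\ell\pi_1$ for each $k$, and the square-function characterization gives $\|\pi_1\|_{\mathcal{W}^{s,r}}\lesssim \big\|\big(\sum_k 2^{2ks}|S_{k-2}u|^2|\Delta_k v|^2\big)^{1/2}\big\|_{L^r}$. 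Bounding $|S_{k-2}u|\lesssim Mu$ pointwise by the Hardy--Littlewood maximal function, pulling $Mu$ out of the $\ell^2$ sum, and applying H\"older in $L^r$ with $\frac1r=\frac1{q_1}+\frac1{q_2}$ and the maximal theorem, this is $\lesssim\|Mu\|_{L^{q_1}}\|v\|_{\mathcal{W}^{s,q_2}}\lesssim \|u\|_{L^{q_1}}\|v\|_{\mathcal{W}^{s,q_2}}$. By symmetry $\|\pi_2\|_{\mathcal{W}^{s,r}}\lesssim \|u\|_{\mathcal{W}^{s,p_1}}\|v\|_{L^{p_2}}$, and these two are precisely the two terms on the right-hand side.

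The remaining term $\pi_3$ is the one I expect to be the main obstacle, since its summands $\Delta_j u\,\Delta_k v$ with $|j-k|\le 2$ are frequency-localized only in a \emph{ball} of radius $\sim 2^j$, so $\Delta_\ell\pi_3=\sum_{j\ge \ell-C,\ |k-j|\le 2}\Delta_\ell(\Delta_j u\,\Delta_k v)$ picks up all high frequencies above level $\ell$. Here the hypothesis $s\ge 0$ enters decisively: for $\ell\le j+C$ one has $2^{\ell s}\le 2^{(j+C)s}$, and since $\sum_{j\ge \ell-C}2^{(\ell-j)s}\lesssim 1$ when $s>0$, a discrete Young/Schur estimate in the dyadic index, together with $|\Delta_k v|\lesssim Mv$, yields the pointwise bound $\big(\sum_\ell 2^{2\ell s}|\Delta_\ell\pi_3|^2\big)^{1/2}\lesssim (Mv)\,\big(\sum_j 2^{2js}|\Delta_j u|^2\big)^{1/2}$; H\"older in $L^r$ with $\frac1r=\frac1{p_1}+\frac1{p_2}$ and the maximal theorem then give $\|\pi_3\|_{\mathcal{W}^{s,r}}\lesssim \|u\|_{\mathcal{W}^{s,p_1}}\|v\|_{L^{p_2}}$. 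Summing the three contributions finishes the proof. The only genuinely non-elementary ingredients are thus the Littlewood--Paley characterization of $\mathcal{W}^{s,r}$ (needing $r\in(1,\infty)$ and Fefferman--Stein) and the summation in the resonant block (needing $s\ge 0$); a fully detailed proof is the one carried out in the reference cited right after the statement.
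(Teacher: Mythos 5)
The paper does not prove this lemma at all: it is quoted verbatim from the reference given right after the statement (Taylor, \emph{Tools for PDE}, Proposition 1.1, p.~105), so your Littlewood--Paley/Bony paraproduct argument is essentially a reconstruction of the proof in the cited source, and its overall structure (square-function characterization of $\mathcal{W}^{s,r}$ for $1<r<\infty$, low--high and high--low paraproducts estimated via $|S_{k}u|\lesssim Mu$ and Fefferman--Stein, resonant term summed using $s>0$) is correct. One technical slip: in the resonant block the inequality you state as a \emph{pointwise} bound, $\bigl(\sum_\ell 2^{2\ell s}|\Delta_\ell\pi_3|^2\bigr)^{1/2}\lesssim (Mv)\bigl(\sum_j 2^{2js}|\Delta_j u|^2\bigr)^{1/2}$, is not literally true, since $\Delta_\ell$ is nonlocal and cannot be commuted past the product; the standard fix is to bound $|\Delta_\ell(\Delta_j u\,\tilde{\Delta}_j v)|\lesssim M(\Delta_j u\,\tilde{\Delta}_j v)$, perform the discrete Young estimate in the dyadic index, remove the maximal function with the vector-valued Fefferman--Stein inequality after taking the $L^r$ norm, and only then insert $|\tilde{\Delta}_j v|\lesssim Mv$ and apply H\"older with $\frac1r=\frac1{p_1}+\frac1{p_2}$. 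With that reordering, which uses only tools you have already invoked, the argument is complete.
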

\noindent
Now, we are interested in defining the product $f \cdot g$ when $g$ is a function (of regularity $\beta>0$) and $f$ is only a distribution of Sobolev regularity $-\alpha<0$. This is the subject of the lemma below that guarantees that the product makes sense as soon as $\beta> \alpha$ and inherits the worst regularity, namely $-\alpha$. The proof of this result can be found in \cite[Section 4.4.3]{prod-ter}.
\begin{lemma}\label{product}
Fix $d\geq 1$ a space dimension. Let $\al,\be >0$ and {$1 \leq p,p_1,p_2< \infty$} be such that {
$$
\frac{1}{p} = \frac{1}{p_{1}}+\frac{1}{p_{2}}\quad \text{and} \quad 0<\alpha<\beta.
$$}
If $f\in \cw^{-\al,p_1}(\R^d)$ and $g\in \cw^{\be,p_2}(\R^d)$, then $f\cdot g\in \cw^{-\al,p}(\R^d)$ and the following bound holds true
$$
\| f\cdot g\|_{\cw^{-\al,p}(\mathbb{R}^d)} \lesssim \|f\|_{\cw^{-\al,p_1}(\mathbb{R}^d)} \| g\|_{\cw^{\be,p_2}(\mathbb{R}^d)} \ .$$
\end{lemma}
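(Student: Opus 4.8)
The plan is to combine Bony's paraproduct decomposition with the identification $\cw^{s,p}(\R^d)=F^{s}_{p,2}(\R^d)$ of the Bessel-potential spaces with Triebel--Lizorkin spaces and the attendant square-function characterisations. Fix an inhomogeneous Littlewood--Paley family $(\Delta_j)_{j\ge -1}$, set $S_j=\sum_{k<j}\Delta_k$, and split
$$fg=\Pi_f(g)+\Pi_g(f)+R(f,g),\qquad \Pi_f(g)=\sum_j S_{j-2}f\,\Delta_j g,\quad \Pi_g(f)=\sum_j S_{j-2}g\,\Delta_j f,\quad R(f,g)=\sum_{|j-k|\le1}\Delta_j f\,\Delta_k g.$$
Each of the three series will be shown to converge in $\cs'(\R^d)$ as a by-product of the quantitative bounds below, which is what actually defines the product $f\cdot g$.

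For the paraproduct $\Pi_g(f)$ every summand has Fourier support in a dyadic annulus $|\xi|\sim 2^j$, so the annulus version of the Triebel--Lizorkin characterisation gives $\|\Pi_g(f)\|_{F^{-\alpha}_{q,2}}\lesssim\big\|\big(\sum_j 2^{-2\alpha j}|S_{j-2}g\,\Delta_j f|^2\big)^{1/2}\big\|_{L^q}$; bounding $\sup_j|S_{j-2}g|\lesssim\mathcal Mg$ (Hardy--Littlewood maximal function), pulling it out of the $\ell^2$-sum, and applying Hölder with $\tfrac1q=\tfrac1{p_1}+\tfrac1{p_2}$ reduces this to $\|\mathcal Mg\|_{L^{p_2}}\|f\|_{F^{-\alpha}_{p_1,2}}\lesssim\|g\|_{\cw^{\beta,p_2}}\|f\|_{\cw^{-\alpha,p_1}}$ (only $g\in L^{p_2}$ is used, so $\beta\ge0$ suffices here). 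For $\Pi_f(g)$, again the summands are annulus-supported; writing $f=\langle\nabla\rangle^{\alpha}F$ with $\|F\|_{L^{p_1}}=\|f\|_{\cw^{-\alpha,p_1}}$ and using the pointwise bound $|S_{j-2}f|\lesssim2^{\alpha j}\mathcal MF$, one gets $2^{-\alpha j}|S_{j-2}f\,\Delta_jg|\lesssim\mathcal MF\,|\Delta_jg|$, and since $j\ge-1$ and $\beta>0$ one may dominate $\big(\sum_j|\Delta_jg|^2\big)^{1/2}\lesssim\big(\sum_j2^{2\beta j}|\Delta_jg|^2\big)^{1/2}$ and close with Hölder exactly as before.

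The substantive point is the resonant term $R(f,g)=\sum_j\Delta_jf\,\widetilde\Delta_jg$ with $\widetilde\Delta_j=\sum_{|k-j|\le1}\Delta_k$, whose generic summand has Fourier support in a ball $|\xi|\lesssim2^j$ rather than in an annulus, so the square-function characterisation above is unavailable at the negative exponent $-\alpha$. The remedy is to prove more regularity than is needed: since $\beta-\alpha>0$, the ball version of the Triebel--Lizorkin characterisation yields $\|R(f,g)\|_{F^{\beta-\alpha}_{q,2}}\lesssim\big\|\big(\sum_j2^{2(\beta-\alpha)j}|\Delta_jf\,\widetilde\Delta_jg|^2\big)^{1/2}\big\|_{L^q}$; writing $2^{(\beta-\alpha)j}|\Delta_jf\,\widetilde\Delta_jg|=(2^{-\alpha j}|\Delta_jf|)(2^{\beta j}|\widetilde\Delta_jg|)$, using the pointwise bound $\sum_ja_j^2b_j^2\le(\sum_ja_j^2)(\sum_jb_j^2)$, and applying Hölder gives $\|R(f,g)\|_{F^{\beta-\alpha}_{q,2}}\lesssim\|f\|_{F^{-\alpha}_{p_1,2}}\|g\|_{F^{\beta}_{p_2,2}}$; one then concludes via the inhomogeneous embedding $F^{\beta-\alpha}_{q,2}\hookrightarrow F^{-\alpha}_{q,2}$, valid precisely because $\beta>0$. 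Summing the three contributions and recalling $q=p$ produces $\|fg\|_{\cw^{-\alpha,p}}\lesssim\|f\|_{\cw^{-\alpha,p_1}}\|g\|_{\cw^{\beta,p_2}}$.

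I expect the main obstacle to be exactly the resonant term: one must notice that its natural square-function estimate lands in $F^{\beta-\alpha}_{q,2}$ with the strictly positive exponent $\beta-\alpha$, and only afterwards descends to the target $\cw^{-\alpha,p}$ — a naive termwise estimate on $R(f,g)$ reaches only a Besov space with the wrong summation index, and it is the strict hypothesis $\alpha<\beta$ that is used here. The remaining difficulties are routine: the Hardy--Littlewood maximal function forces $p_1,p_2>1$ and the identification $\cw^{s,p}=F^{s}_{p,2}$ needs $1<p<\infty$, so the borderline exponents require a separate elementary argument (they play no role in the applications of the lemma in this paper); and, as noted, the quantitative bounds are what legitimise the formal paraproduct decomposition and thereby give a rigorous meaning to $f\cdot g$ in the first place.
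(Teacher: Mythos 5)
The paper gives no proof of this lemma at all: it simply cites Runst--Sickel, Section 4.4.3, and the paramultiplication argument you present (Bony decomposition, annulus/ball square-function characterisations of $F^{s}_{p,2}$, with the resonant piece first estimated at the positive regularity $\beta-\alpha$ and then embedded down into $F^{-\alpha}_{p,2}$) is precisely the standard proof found in that reference, and it is correct. Your caveat about borderline exponents is essentially moot: the hypotheses $1\le p$, $p_1,p_2<\infty$ and $\tfrac1p=\tfrac1{p_1}+\tfrac1{p_2}$ already force $p_1,p_2>1$, and the only remaining endpoint $p=1$ (never used in the paper, where the lemma is applied with target exponent $2$) follows from your same square-function bounds via the embedding $F^{-\alpha}_{1,2}\hookrightarrow \cw^{-\alpha,1}(\R^d)$.
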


\subsubsection{An interpolation result and a commutator estimate}
To end with, we recall a classical interpolation result followed by a commutator estimate proven in \cite{Schaeffer1} that permits to swap a $\mathcal{C}^{\infty}$ compactly-supported function with the fractional Laplacian.
\begin{lemma}\label{interpol}
Fix $d\geq 1$ a space dimension. Let $s,s_1,s_2\in \R$ and $1\leq p,p_1,p_2<\infty$. Suppose that there exists $\theta \in (0,1)$ such that
$$s=\theta s_1+(1-\theta)s_2 \quad \text{and} \quad \frac{1}{p}=\frac{\theta}{p_1}+\frac{1-\theta}{p_2} \, .$$
If $v\in \cw^{s_1,p_1}(\R^d)\cap \cw^{s_2,p_2}(\R^d)$, then $v\in \cw^{s,p}(\R^d)$ and it holds that
$$\|v\|_{\cw^{s,p}(\mathbb{R}^d)}\leq \|v\|_{\cw^{s_1,p_1}(\mathbb{R}^d)}^\theta \|v\|_{\cw^{s_2,p_2}(\mathbb{R}^d)}^{1-\theta}\, .$$
\end{lemma}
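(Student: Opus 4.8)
The statement is the classical complex-interpolation inequality for the Bessel potential scale, and the plan is to reduce it to the three-lines lemma (equivalently, to Stein's interpolation theorem for analytic families of Fourier multipliers). The first step is a reduction to pure $L^p$ statements: since, directly from the definition of the norms, $\langle \nabla \rangle^{\sigma}$ is an isometry from $\cw^{a,r}(\R^d)$ onto $\cw^{a-\sigma,r}(\R^d)$ for every $a,\sigma\in\R$ and $1\leq r<\infty$, setting $u:=\langle \nabla \rangle^{s_1}v$ and $\sigma:=s_2-s_1$ and using $s-s_1=\theta s_1+(1-\theta)s_2-s_1=(1-\theta)\sigma$, the claim becomes
\[
\|\langle \nabla \rangle^{(1-\theta)\sigma}u\|_{L^p(\R^d)}\leq \|u\|_{L^{p_1}(\R^d)}^{\theta}\,\|\langle \nabla \rangle^{\sigma}u\|_{L^{p_2}(\R^d)}^{1-\theta}.
\]
By density of the Schwartz class in each $L^{p_i}(\R^d)$ (recall $p_i<\infty$) it suffices to treat $u\in\mathcal{S}(\R^d)$, $u\neq 0$; we may also assume $1<p,p_1,p_2<\infty$, the remaining case $p=1$ forcing $p_1=p_2=1$ and playing no role in the sequel.

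Next I would run the standard three-lines argument. By duality, $\|\langle \nabla \rangle^{(1-\theta)\sigma}u\|_{L^p}$ equals the supremum of $|\int_{\R^d}(\langle \nabla \rangle^{(1-\theta)\sigma}u)g\,dx|$ over simple functions $g$ with $\|g\|_{L^{p'}}\leq 1$. Writing $g=e^{i\psi}|g|$ with $|e^{i\psi}|=1$, setting $\tfrac{1}{\pi(z)}:=\tfrac{z}{p_1'}+\tfrac{1-z}{p_2'}$, $g_z:=e^{i\psi}|g|^{p'/\pi(z)}$, $A:=\|u\|_{L^{p_1}}$, $B:=\|\langle \nabla \rangle^{\sigma}u\|_{L^{p_2}}$, and $\delta>0$, I would consider
\[
F(z):=e^{\delta(z^2-\theta^2)}\,A^{-z}\,B^{-(1-z)}\int_{\R^d}\big(\langle \nabla \rangle^{(1-z)\sigma}u\big)(x)\,g_z(x)\,dx .
\]
For $u\in\mathcal{S}(\R^d)$ and $g$ simple this is analytic on $\{0<\mathrm{Re}\,z<1\}$ and continuous and bounded on the closed strip — the Gaussian factor $e^{\delta(z^2-\theta^2)}$ being included precisely to absorb the polynomial-in-$\mathrm{Im}\,z$ growth produced by the imaginary-order part of $\langle \nabla \rangle^{(1-z)\sigma}$. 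On $\mathrm{Re}\,z=1$ one has $\mathrm{Re}\big((1-z)\sigma\big)=0$, $\|g_z\|_{L^{p_1'}}=\|g\|_{L^{p'}}^{p'/p_1'}\leq 1$, and $\langle \nabla \rangle^{(1-z)\sigma}u$ is an imaginary power of $\langle \nabla \rangle$ applied to $u\in L^{p_1}$, so H\"older together with the $L^{p_1}$-boundedness of that multiplier yields $|F(z)|\leq C_\delta$; on $\mathrm{Re}\,z=0$ one instead factors $\langle \nabla \rangle^{(1-z)\sigma}u=\langle \nabla \rangle^{-i\,\mathrm{Im}(z)\,\sigma}\big(\langle \nabla \rangle^{\sigma}u\big)$ and argues likewise with $L^{p_2}$. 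The three-lines lemma then gives $|F(\theta)|\leq C_\delta$, and since $F(\theta)=A^{-\theta}B^{-(1-\theta)}\int_{\R^d}(\langle \nabla \rangle^{(1-\theta)\sigma}u)g\,dx$ (note $\pi(\theta)=p'$, hence $g_\theta=g$), taking the supremum over $g$ produces $\|\langle \nabla \rangle^{(1-\theta)\sigma}u\|_{L^p}\leq C_\delta\,A^{\theta}B^{1-\theta}$, i.e. the asserted inequality up to a constant depending only on $d,s_1,s_2,p_1,p_2,\theta$.

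The single non-elementary ingredient, and the main obstacle in this scheme, is the control of the imaginary powers $\langle \nabla \rangle^{i\tau}$ on $L^r(\R^d)$, $1<r<\infty$: the symbol $(1+|\xi|^2)^{i\tau/2}$ obeys the Mikhlin--H\"ormander condition with constants that grow only polynomially in $\tau$, and it is exactly this fact that makes the regularizing Gaussian factor legitimate and $F$ bounded on the strip. (When $p=p_1=p_2=2$ this difficulty disappears and the whole statement reduces, via Plancherel, to the log-convexity of $\tau\mapsto\int_{\R^d}|\widehat{u}(\xi)|^2\langle\xi\rangle^{2\tau}\,d\xi$, which already gives the constant $1$.) To recover the constant $1$ written in the statement it is cleanest to invoke directly the classical identification $\cw^{s,p}(\R^d)=[\cw^{s_1,p_1}(\R^d),\cw^{s_2,p_2}(\R^d)]_{\theta}$ — which, after the isometric reductions of the first paragraph, rests on the Riesz--Thorin identity $[L^{p_1},L^{p_2}]_{\theta}=L^p$ — together with the general bound $\|w\|_{[X_0,X_1]_{\theta}}\leq\|w\|_{X_0}^{1-\theta}\|w\|_{X_1}^{\theta}$ valid for any interpolation couple; this is covered in the standard references on interpolation of function spaces. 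In any event, only the $\lesssim$ form of the inequality is used afterwards, so the three-lines argument above already suffices for our purposes.
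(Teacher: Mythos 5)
The paper offers no proof of Lemma~\ref{interpol}: it is merely recalled as a classical interpolation result for Bessel-potential spaces, so there is no in-paper argument to compare against. Your proof is the standard one — lift by $\langle \nabla \rangle^{s_1}$ to reduce the statement to an $L^p$ inequality and run Stein's interpolation via the three-lines lemma, the only real ingredient being the Mikhlin--H\"ormander bound on the imaginary powers $\langle \nabla \rangle^{i\tau}$ on $L^r$, $1<r<\infty$, with polynomial growth in $\tau$ absorbed by the Gaussian factor — and it is essentially sound for the exponents the paper actually uses (all $p_i\geq 2$). Three caveats. First, the reduction to Schwartz $u$ requires density of $\mathcal{S}(\R^d)$ in the \emph{intersection}, i.e.\ simultaneous approximation of $u$ in $L^{p_1}$ and of $\langle \nabla \rangle^{\sigma}u$ in $L^{p_2}$, followed by a limiting argument (apply the inequality to differences to get a Cauchy sequence in $\cw^{s,p}$) to conclude $v\in\cw^{s,p}$ for general $v$; density ``in each $L^{p_i}$'' separately, as you phrase it, is not the fact being used. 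Second, your claim that restricting to $1<p,p_1,p_2<\infty$ only discards the case $p_1=p_2=1$ is inaccurate: the statement also allows, say, $p_1=1<p_2$ (then $p>1$), and there the endpoint estimate on $\mathrm{Re}\,z=1$ breaks down because imaginary powers are not bounded on $L^1$; so the lemma exactly as stated (with $1\leq p_i$) is not fully covered — harmless for this paper, where only $p_i\geq 2$ occurs, but worth saying explicitly. Third, neither route you invoke actually delivers the constant $1$ appearing in the statement: the three-lines argument yields some constant $C_\delta>1$, and the identification $[\cw^{s_1,p_1},\cw^{s_2,p_2}]_{1-\theta}=\cw^{s,p}$ holds only with equivalence of norms, so combining it with the abstract bound $\|w\|_{[X_0,X_1]_{\eta}}\leq\|w\|_{X_0}^{1-\eta}\|w\|_{X_1}^{\eta}$ again gives the inequality only up to a multiplicative constant. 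Since the paper uses the lemma exclusively inside $\lesssim$ estimates, this is immaterial, but your proof establishes the $\lesssim$ form, not the sharp-constant form as written.
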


\begin{lemma}\label{commut}
For every $s >0$ and for all $\mathcal{C}^{\infty}$ compactly-supported functions $\rho, g:\R^d \to \R$, it holds that
\begin{equation*}
 \|(\emph{\id}-\Delta)^{\frac{s}{2}}(\rho\cdot g)-\rho \cdot (\emph{\id}-\Delta)^{\frac{s}{2}}(g)\|_{L^2(\mathbb{R}^d)}\lesssim \|g\|_{H^{s-1}(\mathbb{R}^d)} \, ,
\end{equation*}
where the proportional constant only depends on $\rho$ and $s$.
\end{lemma}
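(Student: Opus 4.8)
The plan is to pass to the Fourier side and reduce the bound to Young's convolution inequality, exploiting the rapid decay of $\hat\rho$. Since $\rho,g\in\mathcal{C}^\infty_c(\mathbb{R}^d)$ and $\langle\xi\rangle^s$ is smooth with polynomial growth, all the functions below lie in $\mathcal{S}(\mathbb{R}^d)$, so the commutator is a well-defined $L^2$ function. Writing $Cg:=(\id-\Delta)^{\frac{s}{2}}(\rho g)-\rho\,(\id-\Delta)^{\frac{s}{2}}g$ and using $\widehat{\rho g}(\xi)=(2\pi)^{-d}\int_{\mathbb{R}^d}\hat\rho(\xi-\eta)\hat g(\eta)\,d\eta$, one gets
\[
\mathcal{F}(Cg)(\xi)=\frac{1}{(2\pi)^d}\int_{\mathbb{R}^d}\hat\rho(\xi-\eta)\,\big(\langle\xi\rangle^s-\langle\eta\rangle^s\big)\,\hat g(\eta)\,d\eta .
\]
Setting $h(\eta):=\langle\eta\rangle^{s-1}\hat g(\eta)$, so that $\|h\|_{L^2(\mathbb{R}^d)}$ is a dimensional constant times $\|g\|_{H^{s-1}(\mathbb{R}^d)}$ by Plancherel, it suffices (again by Plancherel) to show that the integral operator with kernel
\[
L(\xi,\eta):=\big|\hat\rho(\xi-\eta)\big|\,\big|\langle\xi\rangle^s-\langle\eta\rangle^s\big|\,\langle\eta\rangle^{1-s}
\]
is bounded on $L^2(\mathbb{R}^d)$.

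The key elementary input is the symbol estimate
\[
\big|\langle\xi\rangle^s-\langle\eta\rangle^s\big|\ \lesssim\ |\xi-\eta|\,\big(\langle\xi\rangle^{s-1}+\langle\eta\rangle^{s-1}\big),\qquad s>0,
\]
obtained by applying the mean value theorem to $t\mapsto t^s$ on the interval between $\langle\xi\rangle$ and $\langle\eta\rangle$ (which lies in $[1,\infty)$), bounding the resulting value $c^{s-1}$ by $\langle\xi\rangle^{s-1}+\langle\eta\rangle^{s-1}$ (here one distinguishes $s\ge1$ from $0<s<1$ according to the monotonicity of $t\mapsto t^{s-1}$, using crucially that $c\ge1$), and using the $1$-Lipschitz bound $|\langle\xi\rangle-\langle\eta\rangle|\le|\xi-\eta|$. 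Inserting this into $L$ splits the kernel into two pieces. In the piece carrying $\langle\eta\rangle^{s-1}$ the weights cancel and $L$ is dominated by the convolution kernel $K(\xi-\eta)$ with $K(\zeta)=|\zeta|\,|\hat\rho(\zeta)|\in L^1(\mathbb{R}^d)$. In the piece carrying $\langle\xi\rangle^{s-1}$ one uses $\langle\xi\rangle\le\langle\eta\rangle(1+|\xi-\eta|)$ and $\langle\eta\rangle\le\langle\xi\rangle(1+|\xi-\eta|)$ to get $\langle\xi\rangle^{s-1}\langle\eta\rangle^{1-s}\le(1+|\xi-\eta|)^{|s-1|}$, so that $L$ is dominated by $\widetilde K(\xi-\eta)$ with $\widetilde K(\zeta)=|\zeta|\,(1+|\zeta|)^{|s-1|}\,|\hat\rho(\zeta)|\in L^1(\mathbb{R}^d)$, the $L^1$ membership in both cases being guaranteed by $\hat\rho\in\mathcal{S}(\mathbb{R}^d)$. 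Young's inequality $\|(K+\widetilde K)\ast h\|_{L^2}\le\|K+\widetilde K\|_{L^1}\|h\|_{L^2}$ then closes the estimate, with a constant depending only on $\rho$ and $s$.

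The only point requiring a little care is the symbol estimate for $0<s<1$, where $t\mapsto t^s$ is merely H\"older continuous; this is harmless here because the arguments $\langle\xi\rangle,\langle\eta\rangle$ are bounded below by $1$, so the intermediate value $c^{s-1}$ stays controlled. Everything else is a direct application of Young's inequality. An equivalent argument appears in \cite{Schaeffer1}.
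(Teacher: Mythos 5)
Your proof is correct. Note that the paper does not reproduce an argument for Lemma \ref{commut} at all: it simply quotes the estimate from \cite{Schaeffer1}, so there is nothing internal to compare against. Your self-contained argument — passing to the Fourier side, bounding the symbol difference by $|\langle\xi\rangle^s-\langle\eta\rangle^s|\lesssim|\xi-\eta|\big(\langle\xi\rangle^{s-1}+\langle\eta\rangle^{s-1}\big)$ via the mean value theorem (with the case distinction $s\geq 1$ versus $0<s<1$ handled correctly because $\langle\xi\rangle,\langle\eta\rangle\geq 1$), trading $\langle\xi\rangle^{s-1}\langle\eta\rangle^{1-s}$ for $(1+|\xi-\eta|)^{|s-1|}$ by Peetre-type inequalities, and closing with Young's inequality using the rapid decay of $\hat\rho$ — is the standard route for such commutator bounds and is essentially the argument the cited reference carries out; all steps are justified since $\rho,g\in\mathcal{C}^{\infty}_c$ keeps every object in the Schwartz class, and the final constant indeed depends only on $\rho$ and $s$.
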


\subsection{Statement and proof of our main result}

\noindent
Let us fix once and for all a $\mathcal{C}^{\infty}$ compactly-supported function $\rho:\R^d\to\R$ of the form~\eqref{rho}. For all $T\geq 0$, $\eta>0$, $p,q\geq 2$, let $Y^{s,\eta,(p,q)}_\rho(T)$ be the space defined by 
\begin{equation*}
Y^{s,\eta,(p,q)}_\rho(T):=\mathcal{C}([0,T]; H^{-s}(\mathbb{R}^d))\cap L^p([0,T]; \cw^{-s,q}(\mathbb{R}^d))\cap L^{\frac{1}{\eta}}_T H^{-s+\eta}_\rho \, .
\end{equation*}

\begin{theorem}
Let $1\leq d\leq 3$ be a space dimension and $\rho: \mathbb{R}^d \rightarrow \mathbb{R}$ be a $\mathcal{C}^{\infty}$ compactly-supported function of the form \eqref{rho}. Besides, assume that $\displaystyle\alpha<\frac{d}{2}$ verifies $\alpha> \alpha_d$, where
\begin{equation}\label{condition-alpha}
\alpha_d=
\left\{
\begin{array}{l}
1/4 \quad \text{if} \ d=1\\
5/6 \quad \text{if} \ d=2\\
17/12 \quad \text{if} \ d=3 
\end{array}
\right. \, .
\end{equation}
Recall that, for all fixed $\varepsilon>0$, the real number $s_{d,\alpha,\varepsilon}>0$ is defined by $s=s_{d,\alpha,\varepsilon}=(\frac{d}{2}-\alpha)+\varepsilon$.
Then, if $\varepsilon>0$ is small enough, one can find parameters ${\eta\in [s,1/2]}$ and $p,q\geq 2$ such that, for all $\phi \in H^{-s}(\mathbb{R}^d)$ and $(\luxor,\cherry) \in \mathcal{E}_{s,\kappa}$, there exists a time $T>0$ for which equation~\eqref{ccl-snls} admits a unique solution in the above-defined set $Y^{s,\eta,(p,q)}_\rho(T)$.
\end{theorem}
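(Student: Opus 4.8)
The plan is to solve~\eqref{ccl-snls} by a contraction argument in a closed ball of $Y^{s,\eta,(p,q)}_\rho(T)$ for $T$ small, applied to the map
\begin{multline*}
\Gamma(v)_t=e^{-it\Delta}\phi-i\int_0^t e^{-i(t-\tau)\Delta}(\rho^2|v_\tau|^2)\,d\tau-i\int_0^t e^{-i(t-\tau)\Delta}((\rho\overline{v}_\tau)\cdot\luxor_\tau)\,d\tau\\
-i\int_0^t e^{-i(t-\tau)\Delta}((\rho v_\tau)\cdot\overline{\luxor_\tau})\,d\tau+\cherry\, ,\qquad t\in[0,T]\, .
\end{multline*}
The preliminary step is the choice of parameters: since $\alpha>\alpha_d$ forces, for $\varepsilon>0$ small enough, $s=\tfrac{d}{2}-\alpha+\varepsilon<\tfrac14$, one can fix $\eta$ with $2s<\eta\le\tfrac12$ and Schr\"odinger-admissible pairs $(p,q)$---together with the auxiliary admissible pairs entering the inhomogeneous Strichartz bound of Lemma~\ref{lem:strichartz2}---so that all the H\"older-in-time and Sobolev-embedding constraints appearing below are satisfied. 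It is precisely the compatibility of these constraints that produces the numerical thresholds in~\eqref{condition-alpha}.

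Next I would bound each of the five contributions to $\Gamma(v)$ in the three norms defining $Y^{s,\eta,(p,q)}_\rho(T)$. The linear term $e^{-it\Delta}\phi$ is controlled in $\mathcal{C}([0,T];H^{-s})$ by unitarity of the flow, in $L^p([0,T];\mathcal{W}^{-s,q})$ by the homogeneous Strichartz estimate of Lemma~\ref{lem:strichartz2}, and in $L^{1/\eta}_T H^{-s+\eta}_\rho$ by the Kato-type local smoothing of Lemma~\ref{local-regu}, whose hypotheses $0\le s,\eta\le\tfrac12$ and $\rho$ of the form~\eqref{rho} hold by construction. The deformed term $\cherry$ lies in $\mathcal{C}([0,T];\mathcal{W}^{-2s+\kappa,p})$ for every $p$ by Proposition~\ref{cerise-ssj}, and for $\varepsilon$ small and $\alpha>\alpha_d$ one has $-2s+\kappa\ge-s+\eta$, so it belongs to the three spaces directly. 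For the quadratic Duhamel term, the point is that $v\in L^{1/\eta}_T H^{-s+\eta}_\rho$ with $-s+\eta>0$, so that, after moving the cutoff past the Bessel potential with the commutator estimate of Lemma~\ref{commut}, $\rho v(\tau)$ is a genuine function of positive Sobolev regularity; the fractional Leibniz rule of Lemma~\ref{frac-leibniz} then gives a bound of the type $\|\rho^2|v_\tau|^2\|_{\mathcal{W}^{-s+\eta,r}}\lesssim\|\rho v_\tau\|_{\mathcal{W}^{-s+\eta,p_1}}\|\rho v_\tau\|_{L^{p_2}}$ for suitable exponents, and integrating in $\tau$---H\"older in time supplying a positive power of $T$---and invoking Lemma~\ref{lem:strichartz2} and Lemma~\ref{local-regu} bounds this term in all three norms. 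Finally, $\luxor\in\mathcal{C}([0,T];\mathcal{W}^{-s,p})$ has negative regularity $-s$ while $\rho v$ has regularity $-s+\eta>s$---which is exactly why $\eta$ must exceed $2s$---so the products $\rho\overline{v}\cdot\luxor$ and $\rho v\cdot\overline{\luxor}$ are well defined in a space $\mathcal{W}^{-s,r}$ with the expected bilinear bounds by Lemma~\ref{product}, and Lemma~\ref{lem:strichartz2}, Lemma~\ref{local-regu} and H\"older in time close the estimate of the corresponding Duhamel integrals.

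Summing these estimates shows that $\Gamma$ maps the ball of some radius $R$---depending on $\|\phi\|_{H^{-s}}$ and on the $\mathcal{E}_{s,\kappa}$-norm of $(\luxor,\cherry)$---into itself provided $T=T(R)$ is small, since every nonlinear contribution carries a gain $T^\theta$ with $\theta>0$. The contraction estimate is obtained identically: $\rho^2(|v|^2-|w|^2)$ and the differences of the mixed terms are controlled by the same (bi)linear bounds applied to $v-w$, again with a factor $T^\theta$, so that for $T$ small $\Gamma$ is a strict contraction on the ball. Banach's fixed-point theorem then yields a unique solution there, and uniqueness in the whole of $Y^{s,\eta,(p,q)}_\rho(T)$ follows by the standard argument of comparing two solutions on a short subinterval, on which both automatically lie in a common ball, and iterating.

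\textbf{The main obstacle} is the treatment of the quadratic term $\rho^2|v|^2$ with only the half-derivative gain of Lemma~\ref{local-regu} at our disposal: closing the Leibniz estimate above demands a tight balance between the little regularity $-s+\eta$ one can afford, the integrability exponents allowed by Schr\"odinger admissibility, and the Sobolev embeddings relating $\mathcal{W}^{-s+\eta,p_1}$, $L^{p_2}$ and the dual Strichartz space. It is precisely this balance---together, in low dimension, with the regularity requirement $-2s+\kappa\ge-s+\eta$ on $\cherry$---that fails once $\alpha$ is too small, and that is responsible for the thresholds $1/4$, $5/6$ and $17/12$ in dimensions $1$, $2$ and $3$.
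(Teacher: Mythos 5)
Your proposal follows essentially the same route as the paper: a Banach fixed-point argument for the map $\Gamma_{T,\luxor,\cherry}$ in $Y^{s,\eta,(p,q)}_\rho(T)$, bounding each Duhamel term via Strichartz (Lemma~\ref{lem:strichartz2}), the local smoothing of Lemma~\ref{local-regu}, the Leibniz/interpolation/commutator lemmas for $\rho^2|v|^2$, and Lemma~\ref{product} for the mixed terms, with dimension-dependent choices of $\eta$ and $(p,q)$ and a positive power of $T$ coming from H\"older in time. One minor correction to your closing remark: the thresholds $\alpha_d$ come from the existence of $\eta$ in the windows $2s<\eta<\inf(\tfrac12,\tfrac34-s)$, $\tfrac12-s$, $\tfrac14-s$ (for $d=1,2,3$) needed to close the quadratic estimate, whereas the condition $-s+\eta\le-2s+\kappa$ on $\cherry$ reduces to $\eta\le\tfrac12-\varepsilon$ in each relevant case (since $\kappa-s=\tfrac12-\varepsilon$) and is therefore not what generates the thresholds.
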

\noindent
Our strategy is to establish a fixed-point principle on $\Gamma_{T, \luxor,\cherry}$ the map defined for all $T\geq 0$ and $(\luxor,\cherry)\in \mathcal{E}_{s,\kappa}$ by
\begin{multline*} 
\Gamma_{T, \luxor,\cherry}(v):=e^{-it\Delta}\phi-i\int_0^t e^{-i(t-\tau)\Delta}(\rho^2 |v_{\tau}|^2)\, d\tau-i\int_0^t e^{-i(t-\tau)\Delta}((\rho \overline{v}_{\tau})\cdot(\luxor_{\tau}))\, d\tau\\
-i\int_0^t e^{-i(t-\tau)\Delta}((\rho v_{\tau})\cdot(\overline{\luxor_{\tau}}))\, d\tau+\cherry , \quad t\in [0,T] \, .
\end{multline*}
This will result from the two bounds described in the proposition below.

\begin{proposition}\label{technical}
Assume that $1\leq d\leq 3$ and that $\alpha_d$ satisfies condition~\eqref{condition-alpha}. Then,  if $\varepsilon>0$ is small enough, one can find parameters $\eta>0$, $p,q\geq 2$ and $\tilde{\varepsilon} >0$ such that, setting $Y(T):=Y^{s,\eta,(p,q)}_\rho(T)$, the following bounds hold true: for all $0\leq T\leq 1$, $\phi \in H^{-s}(\mathbb{R}^d), (\luxor_1,\cherry_1) \in \mathcal{E}_{s,\kappa},  (\luxor_2,\cherry_2) \in \mathcal{E}_{s,\kappa}$ and $v, v_1, v_2 \in Y(T)$,
\begin{equation}
\|\Gamma_{T, \luxor_1,\cherry_1}(v)\|_{Y(T)}\lesssim  \|\phi\|_{H^{-s}}+T^{\tilde{\varepsilon}}\Big[\|v\|_{Y(T)}^2+\|\luxor_1\|_{L^\infty_T {\mathcal{W}^{-s,r}}} \|v\|_{Y(T)}\Big]+\|\cherry_1\|_{Y(T)} , \label{ctrl1SNLS}
\end{equation}
and
\begin{align}
&\|\Gamma_{T, \luxor_1, \cherry_1}(v_1)-\Gamma_{T, \luxor_2, \cherry_2}(v_2)\|_{Y(T)}
\nonumber \\
&\lesssim \ T^{\tilde{\varepsilon}}\Big[\|v_1-v_2\|_{Y(T)}\{\|v_1\|_{Y(T)}+\|v_2\|_{Y(T)}\}+\|\luxor_1-\luxor_2\|_{L^\infty_T {\mathcal{W}^{-s,r}}}\|v_1\|_{Y(T)}\nonumber\\
&\hspace{4cm}+\|\luxor_2\|_{L^\infty_T {\mathcal{W}^{-s,r}}}\| v_1-v_2\|_{Y(T)}\Big]+\|\cherry_1-\cherry_2\|_{Y(T)}\label{ctrl2SNLS} \, , 
\end{align}
where $r$ depends on $s$ and $\eta$ and the proportional constants depend only on $\rho$ and $s$.
\end{proposition}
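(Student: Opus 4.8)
The plan is to prove the two estimates \eqref{ctrl1SNLS} and \eqref{ctrl2SNLS} term by term, treating the quadratic self-interaction $\rho^2|v|^2$, the mixed terms $(\rho\overline v)\cdot\luxor$ and $(\rho v)\cdot\overline\luxor$, and the inhomogeneous contribution $\cherry$ separately, and in each case estimating the $\mathcal C_T H^{-s}$ norm, the $L^p_T\cw^{-s,q}$ norm, and the $L^{1/\eta}_T H^{-s+\eta}_\rho$ norm that together make up $\|\cdot\|_{Y(T)}$. First I would fix the functional framework: choose $\varepsilon>0$ small, set $s=(\tfrac d2-\alpha)+\varepsilon$, pick $\eta\in[s,1/2]$ with $-s+\eta>s$ (which forces $\eta>2s$, hence $\eta$ close to $1/2$ and $s$ small, explaining the thresholds $\alpha_d$) and then choose Schr\"odinger-admissible pairs $(p,q)$ and $(a,b)$, together with the auxiliary exponent $r$ from Proposition~\ref{luxo1}, so that all the H\"older conditions in Lemma~\ref{frac-leibniz} and Lemma~\ref{product} are met. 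The free evolution $e^{-it\Delta}\phi$ is controlled in $\mathcal C_TH^{-s}$ trivially, in $L^p_T\cw^{-s,q}$ by Strichartz (Lemma~\ref{lem:strichartz2}), and in $L^{1/\eta}_TH^{-s+\eta}_\rho$ by the local smoothing (Lemma~\ref{local-regu}); the term $\cherry$ contributes exactly $\|\cherry\|_{Y(T)}$ since it appears undifferentiated in $\Gamma$.

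For the Duhamel terms the mechanism is always the same: use Lemma~\ref{local-regu} to gain that $v(t,\cdot)$, after multiplication by $\rho$, lies in $\cw^{-s+\eta,p}$ with $-s+\eta>0$, i.e. is \emph{locally a function}; then for $\rho^2|v|^2$ apply the fractional Leibniz rule (Lemma~\ref{frac-leibniz}) to see that $\rho^2|v|^2\in\cw^{-s+\eta',p'}\hookrightarrow L^1_TH^{-s}$ on compact support after Sobolev embedding, controlling it by $\|v\|_{Y(T)}^2$ with a positive power $T^{\tilde\varepsilon}$ extracted via H\"older in time (since $1/\eta<\infty$); and for the mixed terms $(\rho\overline v)\cdot(\rho\luxor)$ invoke Lemma~\ref{product} with the negative index $-s$ coming from $\luxor\in\cw^{-s,r}$ and the positive index $-s+\eta>s$ coming from $\rho v$, landing in $\cw^{-s,\cdot}\subset L^1_TH^{-s}$, controlled by $\|\luxor\|_{L^\infty_T\cw^{-s,r}}\|v\|_{Y(T)}$ with again a gain $T^{\tilde\varepsilon}$. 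To pass these local statements to honest elements of $H^{-s}(\mathbb R^d)$ one multiplies by a further cutoff and uses the commutator estimate (Lemma~\ref{commut}) and the interpolation lemma (Lemma~\ref{interpol}) to absorb the lower-order remainder; then Strichartz and local smoothing transfer the $L^1_TH^{-s}$ bound on the forcing into the three components of the $Y(T)$ norm. This yields \eqref{ctrl1SNLS}; the Lipschitz estimate \eqref{ctrl2SNLS} follows by the same computations applied to differences, using the algebraic identities $|v_1|^2-|v_2|^2=\overline{v_1}(v_1-v_2)+\overline{(v_1-v_2)}v_2$ and the bilinearity of the mixed terms, and the fact that all the above bounds are bilinear.

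The main obstacle, as the paper itself signals, is purely a matter of the \emph{exponent bookkeeping}: one must simultaneously satisfy (i) $-s+\eta>s$ so that Lemma~\ref{product} applies to the worst mixed term, (ii) $\eta\le 1/2$ so that Lemma~\ref{local-regu} applies, (iii) a Schr\"odinger-admissibility constraint $\tfrac2p+\tfrac dq=\tfrac d2$ and its dual, (iv) the H\"older relations in Lemmas~\ref{frac-leibniz}--\ref{product} between $p,q,r$ and the integrability exponents coming from the Sobolev embeddings needed to turn a compactly-supported $\cw^{\sigma,p}$ function into an $H^{-s}$ function, and (v) strict positivity of the time-power $\tilde\varepsilon$ in every H\"older-in-time step. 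Checking that the intersection of all these constraints is nonempty is exactly what pins down $\alpha_d$ ($1/4$, $5/6$, $17/12$ in dimensions $1,2,3$): in $d=1$ the only real constraint is being able to construct $\<IPsi2>$ (hence $\alpha>1/4$), while in $d=2,3$ the embedding losses in handling $\rho^2|v|^2$ force the stronger lower bounds on $\alpha$. Once the parameters are chosen the individual estimates are routine and parallel those of \cite{Schaeffer1}; the novelty is only that $\cherry=\rho^2\<IPsi2>$ now enters at regularity $-2s+\kappa>0$ rather than $-2s$, so it no longer dictates the regularity of $v$ and can simply be carried along in $Y(T)$.
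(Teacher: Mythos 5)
Your overall architecture coincides with the paper's: term-by-term estimates on the three components of the $Y(T)$ norm, unitarity plus Strichartz (Lemma~\ref{lem:strichartz2}) plus local smoothing (Lemma~\ref{local-regu}) for $e^{-it\Delta}\phi$, the product rule of Lemma~\ref{product} with $-s+\eta>s$ combined with the commutator estimate (Lemma~\ref{commut}) to put the mixed terms $(\rho\overline v)\cdot\luxor$ and $(\rho v)\cdot\overline{\luxor}$ into $L^1_TH^{-s}$ and then transfer this bound to all of $Y(T)$, polarization for the Lipschitz estimate \eqref{ctrl2SNLS}, and the observation that $\cherry$ enters only through its own $Y(T)$ norm thanks to $-s+\eta\le -2s+\kappa$. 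For those terms your sketch is essentially the paper's proof.

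The gap is in the quadratic term. You propose to place $\rho^2|v|^2$ in $L^1_TH^{-s}$ by fractional Leibniz followed by a Sobolev embedding of a compactly supported $\cw^{-s+\eta,p'}$ function into $H^{-s}$, and then to argue exactly as for the mixed terms. This step fails when $d=2,3$: with $\rho v$ only in $H^{-s+\eta}$ locally, the product $|\rho v|^2$ has regularity $-s+\eta$ but integrability strictly below $2$, and restoring $L^2$-based integrability costs $d\big(\tfrac1{l'}-\tfrac12\big)$ derivatives; tracking the Leibniz and embedding constraints, landing in $H^{-s}$ requires roughly $2\eta-s\ge \tfrac d2$ (the bilinear estimate $H^{-s+\eta}\cdot H^{-s+\eta}\subset H^{-s}$), which is compatible with the local-smoothing constraint $\eta\le\tfrac12$ only in dimension $1$. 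The paper never places $\rho^2|v|^2$ in $H^{-s}$: it estimates the quadratic Duhamel term directly at the higher regularity $-s+\eta$ (which also gives the $H^{-s+\eta}_\rho$ component without invoking Lemma~\ref{local-regu} for this term) and feeds the forcing into the inhomogeneous Strichartz inequality in a \emph{dual} Strichartz space $L^{r'}_T\cw^{-s+\eta,l'}$ with $l'<2$ (for instance $L^{4/3}_T\cw^{-s+\eta,1}$ when $d=1$), where the low spatial integrability is acceptable. Correspondingly, in $d=2,3$ the factor $\|\rho v\|_{L^{p_2}}$ produced by Lemma~\ref{frac-leibniz} is interpolated (Lemma~\ref{interpol}) against the Strichartz component $L^p_T\cw^{-s,q}$ of $Y(T)$ with $q=4$, resp.\ $6$ — this is precisely what that component of the norm is for — and the time-exponent bookkeeping yielding $T^{\tilde\varepsilon}$ closes only through this interpolation. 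Without these two ingredients your estimate of the quadratic term does not close in dimensions $2$ and $3$, which are exactly the cases where the thresholds $\alpha_d=5/6,\,17/12$ arise.
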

\noindent
The choice of the three parameters $\eta,p,q$ in the above proposition depends on the space dimension $d\in \{1,2,3\}$. For the sake of clarity, we divide our proof into three subcases. As usual, our objective is to bound each term in the expression of $\Gamma_{T, \luxor, \cherry}$ separately. In the following, we will suppose that $0 \leq T \leq 1$.

\subsubsection{Proof of Proposition~\ref{technical} when $d=1$}\label{proof-dim-1}
As soon as $\varepsilon>0$ is small enough, there exists $\eta>0$ such that $$2s<\eta <\inf(\frac12,\frac34-s).$$ Let $(p,q)=(\infty,2).$ Then,
$$Y(T)=\mathcal{C}([0,T]; H^{-s}(\mathbb{R}))\cap  L^{\frac{1}{\eta}}_T H^{-s+\eta}_\rho \, .$$
Also, we define $\theta=\frac{s}{\eta}\in (0,\frac12)$. Notice that a quick computation shows that the pair $(p,q)$ is Schr{\"o}dinger admissible.

\noindent
\textbf{Bound on $e^{-it\Delta}\phi$:}
As $e^{-it\Delta}$ is a unitary operator on $H^{-s}(\mathbb{R})$, it holds that
$$\|e^{-it\Delta}\phi\|_{L^{\infty}_T H^{-s}}= \|\phi\|_{H^{-s}(\mathbb{R})}\, .$$
Besides, since $s\leq \frac12$ and $\eta\leq \frac12$, by local regularization (see Lemma~\ref{local-regu}),
$$\|e^{-it\Delta}\phi\|_{L^{\frac{1}{\eta}}_T H^{-s+\eta}_\rho}\lesssim  \|\phi\|_{H^{-s}(\mathbb{R})}\, ,$$
and, combining the two previous inequalities, we have established that
$$\|e^{-it\Delta}\phi\|_{Y(T)}\lesssim \|\phi\|_{H^{-s}(\mathbb{R})}.$$

\noindent
\textbf{Bound on $\displaystyle-i\int_0^t e^{-i(t-\tau)\Delta}(\rho^2 |v_{\tau}|^2)\, d\tau$:}
Since $\eta >0$ and since $\rho$ is a $\mathcal{C}^{\infty}$ compactly-supported function, one has
\begin{align*}
&\bigg\|-i\int_0^t e^{-i(t-\tau)\Delta}(\rho^2 |v_{\tau}|^2)\, d\tau\bigg\|_{Y(T)}=\\
&\bigg\|-i\int_0^t e^{-i(t-\tau)\Delta}(\rho^2 |v_{\tau}|^2)\, d\tau\bigg\|_{L^{\infty}_T H^{-s}}+\bigg\|-i\int_0^t e^{-i(t-\tau)\Delta}(\rho^2 |v_{\tau}|^2)\, d\tau\bigg\|_{L^{\frac{1}{\eta}}_T H^{-s+\eta}_\rho}\\
&\lesssim \bigg\|-i\int_0^t e^{-i(t-\tau)\Delta}(\rho^2 |v_{\tau}|^2)\, d\tau\bigg\|_{L^{\infty}_T H^{-s+\eta}}+\bigg\|-i\int_0^t e^{-i(t-\tau)\Delta}(\rho^2 |v_{\tau}|^2)\, d\tau\bigg\|_{L^{\frac{1}{\eta}}_T H^{-s+\eta}}\\
& \lesssim \bigg\| -i\int_0^t e^{-i(t-\tau)\Delta}(\rho^2 |v_{\tau}|^2)\, d\tau\bigg\|_{L^{\infty}_T H^{-s+\eta}}\, .\label{refer-proof-dim-1}
\end{align*}
Resorting to Strichartz inequalities (Lemma~\ref{lem:strichartz2}), it holds that
\begin{equation}\label{schro-dim-1}
\bigg\|-i\int_0^t e^{-i(t-\tau)\Delta}(\rho^2 |v_{\tau}|^2)\, d\tau\bigg\|_{Y(T)}\lesssim \|\rho^2 |v|^2\|_{L^{\frac43}_T \mathcal{W}^{-s+\eta,1}} \, .
\end{equation}
Now, according to Leibniz fractional rule (Lemma~\ref{frac-leibniz}), for all $t\geq 0$,
$$
\|\rho^2 |v|^2(t,.)\|_{\mathcal{W}^{-s+\eta,1}}\lesssim \| \rho v(t,.)\|_{H^{-s+\eta}}\| \rho v(t,.)\|_{L^2}\, ,
$$
and, by interpolation (Lemma~\ref{interpol}),
$$\| \rho v(t,.)\|_{L^2}\leq  \| \rho v(t,.)\|_{H^{-s+\eta}}^\theta \| \rho v(t,.)\|_{H^{-s}}^{1-\theta} \, ,$$
leading to
\begin{align*}
\|\rho^2 |v|^2(t,.)\|_{\mathcal{W}^{-s+\eta,1}}&\lesssim \| \rho v(t,.)\|_{H^{-s+\eta}}^{1+\theta}\| \rho v(t,.)\|_{H^{-s}}^{1-\theta} \\
&\lesssim \|  v(t,.)\|_{H^{-s+\eta}_\rho}^{1+\theta}\| v(t,.)\|_{H^{-s}}^{1-\theta}+\| v(t,.)\|_{H^{-s}}^{2}   \, ,
\end{align*}
where we have used  a commutator estimate (Lemma~\ref{commut}) to obtain the second inequality. Consequently,
\begin{align*}
&\int_0^T dt \, \|\rho^2 |v|^2(t,.)\|_{\mathcal{W}^{-s+\eta,1}}^{\frac43}\\
&\lesssim \|v\|_{Y(T)}^{\frac43(1-\theta)}  \int_0^T dt \, \|  v(t,.)\|_{H^{-s+\eta}_\rho}^{\frac43(1+\theta)} +T \|v\|_{X(T)}^{\frac83}  \\
&\lesssim T^{1-\frac43(1+\theta)\eta}\|v\|_{Y(T)}^{\frac43(1-\theta)}  \bigg(\int_0^T dt \, \|  v(t,.)\|_{H^{-s+\eta}_\rho}^{\frac{1}{\eta}}\bigg)^{\frac43(1+\theta)\eta} +T \|v\|_{Y(T)}^{\frac83}\\
&\lesssim  T^{1-\frac43(1+\theta)\eta} \|v\|_{Y(T)}^{\frac83} \, ,
\end{align*}
and thus, coming back to~\eqref{schro-dim-1}, we can conclude that
\begin{equation*} 
\bigg\|-i\int_0^t e^{-i(t-\tau)\Delta}(\rho^2 |v_{\tau}|^2)\, d\tau\bigg\|_{Y(T)} \lesssim  T^{\frac34-(\eta+s)} \|v\|_{Y(T)}^{2} \, .
\end{equation*}

\noindent
\textbf{Bound on $\displaystyle-i\int_0^t e^{-i(t-\tau)\Delta}((\rho \overline{v}_{\tau})\cdot(\luxor_{\tau}))\, d\tau$, $\displaystyle-i\int_0^t e^{-i(t-\tau)\Delta}((\rho v_{\tau})\cdot(\overline{\luxor_{\tau}}))\, d\tau$:}
Since $s\leq \frac12$ and $\eta\leq \frac12$, by local regularization (Lemma~\ref{local-regu}), the following bound holds true
\begin{align}
&\bigg\|-i\int_0^t e^{-i(t-\tau)\Delta}((\rho \overline{v}_{\tau})\cdot(\luxor_{\tau}))\, d\tau\bigg\|_{L^{\frac{1}{\eta}}_T H^{-s+\eta}_\rho}+\bigg\|-i\int_0^t e^{-i(t-\tau)\Delta}((\rho v_{\tau})\cdot(\overline{\luxor_{\tau}}))\, d\tau\bigg\|_{L^{\frac{1}{\eta}}_T H^{-s+\eta}_\rho}\nonumber\\
&\lesssim \|\rho \overline{v} \cdot \luxor\|_{L^1_T H^{-s}}\, .\label{pro}
\end{align}
Let $l>s$ and $2 \leq r,p<\infty$ be such that $1/2=1/r+1/p$. According to Lemma~\ref{product}, for all $t\geq 0$,
\begin{equation}\label{injection}
\|(\rho \overline{v} \cdot \luxor)(t,.)\|_{H^{-s}} \lesssim\| \luxor(t,.)\|_{\mathcal{W}^{-s,r}}\| \rho v(t,.)\|_{\mathcal{W}^{l,p}} \, .
\end{equation}
As soon as 
$$-s+\eta-l=d(\frac12-\frac1p)=\frac{d}r \quad \Leftrightarrow \quad l=-s +\eta -\frac{d}r\;, $$
we have the Sobolev embedding $
H^{-s +\eta}(\mathbb{R}^d)\hookrightarrow \mathcal{W}^{l,p}(\mathbb{R}^d)$. Now, as $-s +\eta >s$, we can pick $r\geq 2$ large enough such that $l=-s +\eta -\frac{d}r>s$, and from \eqref{injection}, we get that 
$$
\|(\rho \overline{v} \cdot \luxor)(t,.)\|_{H^{-s}} \lesssim\| \luxor(t,.)\|_{\mathcal{W}^{-s,r}}\| \rho v(t,.)\|_{H^{-s+\eta}} \, .
$$
With the help of a commutator estimate (Lemma~\ref{commut}), for all $t\geq 0$,
\begin{equation*}
\| \rho v(t,.)\|_{H^{-s+\eta}}\lesssim \|v(t,.)\|_{H^{-s+\eta}_\rho}+\|v(t,.)\|_{H^{-s}}\, ,
\end{equation*}
entailing that
\begin{align*}
\|\rho \overline{v} \cdot \luxor\|_{L^1_T H^{-s}}&\lesssim \|\luxor\|_{L^\infty_T {\mathcal{W}^{-s,r}}}\{T^{1-\eta}\|v\|_{L^{\frac{1}{\eta}}_T H^{-s+\eta}_\rho}+T\, \|v\|_{L^\infty_T H^{-s}}\}\\
&\lesssim T^{1-\eta}  \|\luxor\|_{L^\infty_T {\mathcal{W}^{-s,r}}}\|v\|_{Y(T)}\, ,
\end{align*}
which, going back to~\eqref{pro}, leads us to
\begin{align}\label{bound-pro}
&\bigg\|-i\int_0^t e^{-i(t-\tau)\Delta}((\rho \overline{v}_{\tau})\cdot(\luxor_{\tau}))\, d\tau\bigg\|_{L^{\frac{1}{\eta}}_T H^{-s+\eta}_\rho}+\bigg\|-i\int_0^t e^{-i(t-\tau)\Delta}((\rho v_{\tau})\cdot(\overline{\luxor_{\tau}}))\, d\tau\bigg\|_{L^{\frac{1}{\eta}}_T H^{-s+\eta}_\rho}\nonumber\\
&\lesssim T^{1-\eta}  \|\luxor\|_{L^\infty_T {\mathcal{W}^{-s,r}}}\|v\|_{Y(T)} \, .
\end{align}
\noindent
On the other hand, resorting to Strichartz inequalities (Lemma~\ref{lem:strichartz2}), it holds that
\begin{equation*}
\bigg\|-i\int_0^t e^{-i(t-\tau)\Delta}((\rho \overline{v}_{\tau})\cdot(\luxor_{\tau}))\, d\tau\bigg\|_{L^\infty_T H^{-s}}+\bigg\|-i\int_0^t e^{-i(t-\tau)\Delta}((\rho v_{\tau})\cdot(\overline{\luxor_{\tau}}))\, d\tau\bigg\|_{L^\infty_T H^{-s}}  
\lesssim \|\rho \overline{v} \cdot \luxor\|_{L^1_T H^{-s}}\, .
\end{equation*}
We are thus in the same position as in~\eqref{pro}, and we can repeat our arguments to establish that
\begin{align*}
&\bigg\|-i\int_0^t e^{-i(t-\tau)\Delta}((\rho \overline{v}_{\tau})\cdot(\luxor_{\tau}))\, d\tau\bigg\|_{L^\infty_T H^{-s}}+\bigg\|-i\int_0^t e^{-i(t-\tau)\Delta}((\rho v_{\tau})\cdot(\overline{\luxor_{\tau}}))\, d\tau\bigg\|_{L^\infty_T H^{-s}}\\
&\lesssim T^{1-\eta} \|\luxor\|_{L^\infty_T {\mathcal{W}^{-s,r}}} \|v\|_{Y(T)} \, .
\end{align*}
\noindent
Combining this estimate with~\eqref{bound-pro}, we have finally obtained that
$$\bigg\|-i\int_0^t e^{-i(t-\tau)\Delta}((\rho \overline{v}_{\tau})\cdot(\luxor_{\tau}))\, d\tau\bigg\|_{Y(T)}+\bigg\|-i\int_0^t e^{-i(t-\tau)\Delta}((\rho v_{\tau})\cdot(\overline{\luxor_{\tau}}))\, d\tau\bigg\|_{Y(T)}\lesssim T^{1-\eta}  \|\luxor\|_{L^\infty_T {\mathcal{W}^{-s,r}}} \|v\|_{Y(T)} \, .$$

\noindent
\textbf{Bound on $\cherry_1$:}
Keeping in mind that $\kappa=1-\alpha$ and $\eta < \frac{1}{2}$, $-s+\eta\leq -2s + \kappa$ if $\varepsilon>0$ is small enough and we immediately have that
$$\|\cherry_1\|_{Y(T)}\leq \|\cherry_1\|_{L^{\infty}_T H^{-2s+\kappa}} .$$
\noindent
Combining the above bounds provides us with~\eqref{ctrl1SNLS}. \eqref{ctrl2SNLS} can easily be obtained in an analogous manner.

\subsubsection{Proof of Proposition~\ref{technical} when $d=2$}
As soon as $\varepsilon>0$ is small enough, there exists $\eta > 0$ such that $$2s<\eta <\frac12-s.$$ Let $(p,q)=(4,4)$. Then,
$$Y(T)=\mathcal{C}([0,T]; H^{-s}(\mathbb{R}^2))\cap L^4([0,T]; \cw^{-s,4}(\mathbb{R}^2))\cap L^{\frac{1}{\eta}}_T H^{-s+\eta}_\rho \, .$$
Also, we define $\theta=\frac{s}{\eta}\in (0,\frac12)$. Notice that a quick computation shows that the pair $(p,q)$ is Schr{\"o}dinger admissible.

\noindent
\textbf{Bound on $e^{-it\Delta}\phi$:}
Exactly as for $d=1$ (see Section~\ref{proof-dim-1}), we show that
$$\|e^{-it\Delta}\phi\|_{Y(T)}\lesssim \|\phi\|_{H^{-s}(\mathbb{R}^2)}.$$

\noindent
\textbf{Bound on $\displaystyle-i\int_0^t e^{-i(t-\tau)\Delta}(\rho^2 |v_{\tau}|^2)\, d\tau$:}
Since $\eta >0$ and since $\rho$ is a $\mathcal{C}^{\infty}$ compactly-supported function, one has
\begin{align*}
&\bigg\|-i\int_0^t e^{-i(t-\tau)\Delta}(\rho^2 |v_{\tau}|^2)\, d\tau\bigg\|_{Y(T)}=\bigg\|-i\int_0^t e^{-i(t-\tau)\Delta}(\rho^2 |v_{\tau}|^2)\, d\tau\bigg\|_{L^{\infty}_T H^{-s}}\\
&+\bigg\|-i\int_0^t e^{-i(t-\tau)\Delta}(\rho^2 |v_{\tau}|^2)\, d\tau\bigg\|_{L^{4}_T \cw^{-s,4}}+\bigg\|-i\int_0^t e^{-i(t-\tau)\Delta}(\rho^2 |v_{\tau}|^2)\, d\tau\bigg\|_{L^{\frac{1}{\eta}}_T H^{-s+\eta}_\rho}\\
&\lesssim \bigg\|-i\int_0^t e^{-i(t-\tau)\Delta}(\rho^2 |v_{\tau}|^2)\, d\tau\bigg\|_{L^{\infty}_T H^{-s+\eta}}+\bigg\|-i\int_0^t e^{-i(t-\tau)\Delta}(\rho^2 |v_{\tau}|^2)\, d\tau\bigg\|_{L^{4}_T \cw^{-s+\eta,4}}\\
&\hspace{0,5cm}+\bigg\|-i\int_0^t e^{-i(t-\tau)\Delta}(\rho^2 |v_{\tau}|^2)\, d\tau\bigg\|_{L^{\frac{1}{\eta}}_T H^{-s+\eta}} \\
&\lesssim \bigg\| -i\int_0^t e^{-i(t-\tau)\Delta}(\rho^2 |v_{\tau}|^2)\, d\tau\bigg\|_{L^{\infty}_T H^{-s+\eta}}+\bigg\|-i\int_0^t e^{-i(t-\tau)\Delta}(\rho^2 |v_{\tau}|^2)\, d\tau\bigg\|_{L^{4}_T \cw^{-s+\eta,4}}\, ,
\end{align*}
and, resorting to Strichartz inequalities (Lemma~\ref{lem:strichartz2}), we deduce that
\begin{equation}\label{schro-dim-2}
\bigg\|-i\int_0^t e^{-i(t-\tau)\Delta}(\rho^2 |v_{\tau}|^2)\, d\tau\bigg\|_{Y(T)}\lesssim \|\rho^2 |v|^2\|_{L^{r'}_T \mathcal{W}^{-s+\eta,l'}} \,
\end{equation}
where
$$(r,l)=(\frac{4}{1+\theta},\frac{4}{1-\theta})\, $$
is a Schr{\"o}dinger admissible pair. Now, according to Leibniz fractional rule (Lemma~\ref{frac-leibniz}), for all $t\geq 0$,
$$
\|\rho^2 |v|^2(t,.)\|_{\mathcal{W}^{-s+\eta,l'}}\lesssim \| \rho v(t,.)\|_{H^{-s+\eta}}\| \rho v(t,.)\|_{L^{\frac{4}{1+\theta}}}\, .
$$
Then, by interpolation (see Lemma~\ref{interpol}), the bound below holds true
$$\| \rho v(t,.)\|_{L^{\frac{4}{1+\theta}}}\leq  \| \rho v(t,.)\|_{H^{-s+\eta}}^\theta \| \rho v(t,.)\|_{\cw^{-s,4}}^{1-\theta} \, $$
and, consequently,
\begin{align*}
\|\rho^2 |v|^2(t,.)\|_{\mathcal{W}^{-s+\eta,l'}}&\lesssim \| \rho v(t,.)\|_{H^{-s+\eta}}^{1+\theta}\| \rho v(t,.)\|_{\cw^{-s,4}}^{1-\theta} \\
&\lesssim \|  v(t,.)\|_{H^{-s+\eta}_\rho}^{1+\theta}\| v(t,.)\|_{\cw^{-s,4}}^{1-\theta}+\| v(t,.)\|_{H^{-s}}^{1+\theta}\| v(t,.)\|_{\cw^{-s,4}}^{1-\theta}   \, ,
\end{align*}
where we have used a commutator estimate (Lemma~\ref{commut}) to obtain the second inequality. H{\"o}lder's inequality with $\la=\frac{3-\theta}{2} > 1$ yields
\begin{align*}
&\int_0^T dt \, \|\rho^2 |v|^2(t,.)\|_{\mathcal{W}^{-s+\eta,l'}}^{r'}\\
&\lesssim \bigg( \int_0^T dt \, \|  v(t,.)\|_{H^{-s+\eta}_\rho}^{(1+\theta)r'\la}\bigg)^{\frac{1}{\la}} \bigg(\int_0^T dt \, \| v(t,.)\|_{\cw^{-s,4}}^{(1-\theta)r'\la'}\bigg)^{\frac{1}{\la'}}+\|v\|_{Y(T)}^{r'(1+\theta)} \int_0^T dt \, \|v_t\|^{r'(1-\theta)}_{\cw^{-s,4}} \, .
\end{align*}
According to the assumptions on our parameters (recall that $\eta <\frac12-s$ and $\theta=\frac{s}{\eta}$),
$$(1+\theta)r'\la=2(1+\theta)<\frac{1}{\eta} \quad \text{and} \quad (1-\theta)r'\la' =4 \, ,$$
yielding
\begin{align*}
&\int_0^T dt \, \|\rho^2 |v|^2(t,.)\|_{\mathcal{W}^{-s+\eta,l'}}^{r'}\\
&\lesssim T^{\frac{1-2\eta(1+\theta)}{\la}}\bigg( \int_0^T dt \, \|  v(t,.)\|_{H^{-s+\eta}_\rho}^{\frac{1}{\eta}}\bigg)^{(1+\theta)r' \eta} \|v\|_{Y(T)}^{\frac{4}{\la'}}+T^{1-\frac{r'(1-\theta)}{4}}\|v\|_{Y(T)}^{r'(1+\theta)} \|v\|_{Y(T)}^{r'(1-\theta)} \\
&\lesssim T^{\frac{1-2\eta-2s}{\la}}\|v\|_{Y(T)}^{(1+\theta)r'+\frac{4}{\la'}}+T^{1-\frac{r'(1-\theta)}{4}}\|v\|_{Y(T)}^{2r'}  \, .
\end{align*}
The previous estimate can be reformulated as
$$
\|\rho^2 |v|^2\|_{L^{r'}_T \mathcal{W}^{-s+\eta,l'}}  \lesssim \{T^{\tilde{\varepsilon}}+T^{\frac12}\} \|v\|_{Y(T)}^2 \, ,
$$
with $\tilde{\varepsilon}=\frac{1}{2} (1-2\eta-2s)$.
Coming back to~\eqref{schro-dim-2}, we have finally obtained that
\begin{equation*}
\bigg\|-i\int_0^t e^{-i(t-\tau)\Delta}(\rho^2 |v_{\tau}|^2)\, d\tau\bigg\|_{Y(T)} \lesssim \{T^{\tilde{\varepsilon}}+T^{\frac12}\} \|v\|_{Y(T)}^2 \, .
\end{equation*}

\noindent
\textbf{Bound on $\displaystyle-i\int_0^t e^{-i(t-\tau)\Delta}((\rho \overline{v}_{\tau})\cdot(\luxor_{\tau}))\, d\tau$, $\displaystyle-i\int_0^t e^{-i(t-\tau)\Delta}((\rho v_{\tau})\cdot(\overline{\luxor_{\tau}}))\, d\tau$:}
Repeating the arguments used for $d=1$, we first establish that
\begin{equation*}
\bigg\|-i\int_0^t e^{-i(t-\tau)\Delta}((\rho \overline{v}_{\tau})\cdot(\luxor_{\tau}))\, d\tau\bigg\|_{L^{\frac{1}{\eta}}_T H^{-s+\eta}_\rho} +\bigg\|-i\int_0^t e^{-i(t-\tau)\Delta}((\rho v_{\tau})\cdot(\overline{\luxor_{\tau}}))\, d\tau\bigg\|_{L^{\frac{1}{\eta}}_T H^{-s+\eta}_\rho}\lesssim \|\rho \overline{v} \cdot \luxor\|_{L^1_T H^{-s}} \, ,
\end{equation*}
and then
\begin{equation*}
\|\rho \overline{v} \cdot \luxor\|_{L^1_T H^{-s}} \lesssim T^{1-\eta}    \|\luxor\|_{L^\infty_T {\mathcal{W}^{-s,r}}} \|v\|_{Y(T)}\, .
\end{equation*}
On the other hand, resorting to Strichartz inequalities (Lemma~\ref{lem:strichartz2}), we can write
\begin{align*}
&\bigg(\bigg\|-i\int_0^t e^{-i(t-\tau)\Delta}((\rho \overline{v}_{\tau})\cdot(\luxor_{\tau}))\, d\tau\bigg\|_{L^\infty_T H^{-s}}+\bigg\|-i\int_0^t e^{-i(t-\tau)\Delta}((\rho v_{\tau})\cdot(\overline{\luxor_{\tau}}))\, d\tau\bigg\|_{L^\infty_T H^{-s}}\bigg)\nonumber\\
&+\bigg(\bigg\|-i\int_0^t e^{-i(t-\tau)\Delta}((\rho \overline{v}_{\tau})\cdot(\luxor_{\tau}))\, d\tau\bigg\|_{L^4_T \cw^{-s,4}}+\bigg\|-i\int_0^t e^{-i(t-\tau)\Delta}((\rho v_{\tau})\cdot(\overline{\luxor_{\tau}}))\, d\tau\bigg\|_{L^4_T \cw^{-s,4}}\bigg)\\
&\lesssim \|\rho \overline{v} \cdot \luxor\|_{L^1_T H^{-s}} \, .
\end{align*}
Combining the two previous bounds, we obtain that
$$\bigg\|-i\int_0^t e^{-i(t-\tau)\Delta}((\rho \overline{v}_{\tau})\cdot(\luxor_{\tau}))\, d\tau\bigg\|_{Y(T)}+\bigg\|-i\int_0^t e^{-i(t-\tau)\Delta}((\rho v_{\tau})\cdot(\overline{\luxor_{\tau}}))\, d\tau\bigg\|_{Y(T)}\lesssim T^{1-\eta}\|\luxor\|_{L^\infty_T {\mathcal{W}^{-s,r}}} \|v\|_{Y(T)} \, .$$

\noindent
\textbf{Bound on $\cherry_1$:}
Keeping in mind that $\kappa=\frac{3}{2}-\alpha$ and $\eta < \frac{1}{2}$, $-s+\eta\leq -2s + \kappa$ if $\varepsilon>0$ is small enough and we immediately have that
$$\|\cherry_1\|_{Y(T)}\leq \|\cherry_1\|_{L^{\infty}_T H^{-2s+\kappa}} + \|\cherry_1\|_{L^{\infty}_T \mathcal{W}^{-2s+\kappa,4}}.$$

\

\subsubsection{Proof of Proposition~\ref{technical} when $d=3$}
As soon as $\varepsilon>0$ is small enough, there exists $\eta>0$ such that $$2s<\eta <\frac14-s.$$ Let $(p,q)=(2,6)$. Then,
$$Y(T)=\mathcal{C}([0,T]; H^{-s}(\mathbb{R}^3))\cap L^2([0,T]; \cw^{-s,6}(\mathbb{R}^3))\cap L^{\frac{1}{\eta}}_T H^{-s+\eta}_\rho \, .$$
Also, we define $\theta=\frac{s}{\eta}\in (0,\frac12)$. Notice that a quick computation shows that the pair $(p,q)$ is Schr{\"o}dinger admissible.

\noindent
\textbf{Bound on $e^{-it\Delta}\phi$:}
Again, the arguments are those used for $d=1$ and $d=2$ yielding
$$\|e^{-it\Delta}\phi\|_{Y(T)}\lesssim \|\phi\|_{H^{-s}(\mathbb{R}^3)}.$$

\noindent
\textbf{Bound on $\displaystyle -i\int_0^t e^{-i(t-\tau)\Delta}(\rho^2 |v_{\tau}|^2)\, d\tau$:}
By definition,
\begin{align*}
&\bigg\|-i\int_0^t e^{-i(t-\tau)\Delta}(\rho^2 |v_{\tau}|^2)\, d\tau\bigg\|_{Y(T)}=\bigg\|-i\int_0^t e^{-i(t-\tau)\Delta}(\rho^2 |v_{\tau}|^2)\, d\tau\bigg\|_{L^{\infty}_T H^{-s}}\\
&+\bigg\|-i\int_0^t e^{-i(t-\tau)\Delta}(\rho^2 |v_{\tau}|^2)\, d\tau\bigg\|_{L^{2}_T \cw^{-s,6}}+\bigg\|-i\int_0^t e^{-i(t-\tau)\Delta}(\rho^2 |v_{\tau}|^2)\, d\tau\bigg\|_{L^{\frac{1}{\eta}}_T H^{-s+\eta}_\rho}\\
&\lesssim \bigg\|-i\int_0^t e^{-i(t-\tau)\Delta}(\rho^2 |v_{\tau}|^2)\, d\tau\bigg\|_{L^{\infty}_T H^{-s+\eta}}+\bigg\|-i\int_0^t e^{-i(t-\tau)\Delta}(\rho^2 |v_{\tau}|^2)\, d\tau\bigg\|_{L^{2}_T \cw^{-s+\eta,6}}\, ,
\end{align*}
since $\eta >0$ and since $\rho$ is a $\mathcal{C}^{\infty}$ compactly-supported function. Now, resorting to Strichartz inequalities (Lemma~\ref{lem:strichartz2}), we deduce that
\begin{equation}\label{schro-dim-3}
\bigg\|-i\int_0^t e^{-i(t-\tau)\Delta}(\rho^2 |v_{\tau}|^2)\, d\tau\bigg\|_{Y(T)}\lesssim \|\rho^2 |v|^2\|_{L^{r'}_T \mathcal{W}^{-s+\eta,l'}} \,
\end{equation}
where
$$(r,l)=(\frac{4}{1+2\theta},\frac{3}{1-\theta})\, $$
is a Schr{\"o}dinger admissible pair. Now, according to Leibniz fractional rule (Lemma~\ref{frac-leibniz}), for all $t\geq 0$,
$$
\|\rho^2 |v|^2(t,.)\|_{\mathcal{W}^{-s+\eta,l'}}\lesssim \| \rho v(t,.)\|_{H^{-s+\eta}}\| \rho v(t,.)\|_{L^{\frac{6}{1+2\theta}}}\, .
$$
Then, by interpolation (see Lemma~\ref{interpol}), the bound below holds true
$$\| \rho v(t,.)\|_{L^{\frac{6}{1+2\theta}}}\leq  \| \rho v(t,.)\|_{H^{-s+\eta}}^\theta \| \rho v(t,.)\|_{\cw^{-s,6}}^{1-\theta} \, $$
and, consequently,
\begin{align*}
\|\rho^2 |v|^2(t,.)\|_{\mathcal{W}^{-s+\eta,l'}}&\lesssim \| \rho v(t,.)\|_{H^{-s+\eta}}^{1+\theta}\| \rho v(t,.)\|_{\cw^{-s,6}}^{1-\theta} \\
&\lesssim \|  v(t,.)\|_{H^{-s+\eta}_\rho}^{1+\theta}\| v(t,.)\|_{\cw^{-s,6}}^{1-\theta}+\| v(t,.)\|_{H^{-s}}^{1+\theta}\| v(t,.)\|_{\cw^{-s,6}}^{1-\theta}   \, ,
\end{align*}
where we have used a commutator estimate (Lemma~\ref{commut}) to obtain the second inequality. H{\"o}lder's inequality with $\la=3-2\theta > 1$ yields
\begin{align*}
&\int_0^T dt \, \|\rho^2 |v|^2(t,.)\|_{\mathcal{W}^{-s+\eta,l'}}^{r'}\\
&\lesssim \bigg( \int_0^T dt \, \|  v(t,.)\|_{H^{-s+\eta}_\rho}^{(1+\theta)r'\la}\bigg)^{\frac{1}{\la}} \bigg(\int_0^T dt \, \| v(t,.)\|_{\cw^{-s,6}}^{(1-\theta)r'\la'}\bigg)^{\frac{1}{\la'}}+\|v\|_{Y(T)}^{r'(1+\theta)} \int_0^T dt \, \|v_t\|^{r'(1-\theta)}_{\cw^{-s,6}} \, .
\end{align*}
According to the assumptions on our parameters (recall that $\eta <\frac14-s$ and $\theta=\frac{s}{\eta}$),
$$(1+\theta)r'\la=4(1+\theta)<\frac{1}{\eta} \quad \text{and} \quad (1-\theta)r'\la' =2 \, ,$$
yielding
\begin{align*}
&\int_0^T dt \, \|\rho^2 |v|^2(t,.)\|_{\mathcal{W}^{-s+\eta,l'}}^{r'}\\
&\lesssim T^{\frac{1-4\eta(1+\theta)}{\la}}\bigg( \int_0^T dt \, \|  v(t,.)\|_{H^{-s+\eta}_\rho}^{\frac{1}{\eta}}\bigg)^{(1+\theta)r' \eta} \|v\|_{Y(T)}^{\frac{2}{\la'}}+T^{1-\frac{r'(1-\theta)}{2}}\|v\|_{Y(T)}^{r'(1+\theta)} \|v\|_{Y(T)}^{r'(1-\theta)} \\
&\lesssim T^{\frac{1-4\eta-4s}{\la}}\|v\|_{Y(T)}^{(1+\theta)r'+\frac{2}{\la'}}+T^{1-\frac{r'(1-\theta)}{2}}\|v\|_{Y(T)}^{2r'}  \, .
\end{align*}
The previous estimate can be reformulated as
$$
\|\rho^2 |v|^2\|_{L^{r'}_T \mathcal{W}^{-s+\eta,l'}}  \lesssim \{T^{\tilde{\varepsilon}}+T^{\frac14}\} \|v\|_{Y(T)}^2 \, ,
$$
with $\tilde{\varepsilon}=\frac{1}{4} (1-4\eta-4s)$.
Coming back to~\eqref{schro-dim-3}, we have finally obtained that
\begin{equation*}
\bigg\|-i\int_0^t e^{-i(t-\tau)\Delta}(\rho^2 |v_{\tau}|^2)\, d\tau\bigg\|_{Y(T)} \lesssim \{T^{\tilde{\varepsilon}}+T^{\frac14}\} \|v\|_{Y(T)}^2 \, .
\end{equation*}

\noindent
\textbf{Bound on $\displaystyle-i\int_0^t e^{-i(t-\tau)\Delta}((\rho \overline{v}_{\tau})\cdot(\luxor_{\tau}))\, d\tau$, $\displaystyle-i\int_0^t e^{-i(t-\tau)\Delta}((\rho v_{\tau})\cdot(\overline{\luxor_{\tau}}))\, d\tau$:}
Repeating the arguments used for $d=2$ (remark indeed that $-s+\eta>s$), we establish that
$$\bigg\|-i\int_0^t e^{-i(t-\tau)\Delta}((\rho \overline{v}_{\tau})\cdot(\luxor_{\tau}))\, d\tau\bigg\|_{Y(T)}+\bigg\|-i\int_0^t e^{-i(t-\tau)\Delta}((\rho v_{\tau})\cdot(\overline{\luxor_{\tau}}))\, d\tau\bigg\|_{Y(T)}\lesssim T^{1-\eta}\|\luxor\|_{L^\infty_T {\mathcal{W}^{-s,r}}} \|v\|_{Y(T)} \, .$$

\noindent
\textbf{Bound on $\cherry_1$:}
Keeping in mind that $\kappa=2-\alpha$ and $\eta < \frac{1}{2}$, $-s+\eta\leq -2s + \kappa$ if $\varepsilon>0$ is small enough and we immediately have that
$$\|\cherry_1\|_{Y(T)}\leq \|\cherry_1\|_{L^{\infty}_T H^{-2s+\kappa}}+\|\cherry_1\|_{L^{\infty}_T \mathcal{W}^{-2s+\kappa,6}}.$$
\noindent
That concludes the proof.

\


\begin{thebibliography}{99}

\bibitem{caz-ter} 
T. Cazenave: Semilinear Schr\"{o}dinger equations. Courant Lecture Notes in Mathematics, American Mathematical Society, Courant Institute of Mathematical Sciences, 2003.

\bibitem{Schaeffer1} 
A. Deya, N. Schaeffer, L. Thomann: A nonlinear Schr\"{o}dinger equation with fractional noise.  {\it Trans. Amer. Math. Soc.}, 374 (2021), no. 6, 4375-4422.

\bibitem{Deng}
Y. Deng, A. Nahmod, H. Yue: Random tensors, propagation of randomness, and nonlinear dispersive
equations. arXiv:2006.09285 [math.AP].

\bibitem{Erdos}
L. Erd\"{o}s, B. Schlein and H. T. Yau: Derivation of the Gross-Pitaevskii equation for the
dynamics of Bose-Einstein condensate. {\it Ann. Math.}, 172 (2010), 291-370

\bibitem{Forlano}
J. Forlano, T. Oh, Y. Wang: Stochastic cubic nonlinear Schr\"{o}dinger equation with almost space-time white
noise. {\it J. Aust. Math. Soc.}, 109 (2020), no. 1, 44-67.

\bibitem{GRR-ter} 
A. M. Garsia, E. Rodemich, H. Rumsey Jr., and M. Rosenblatt: A Real Variable Lemma and the Continuity of Paths of Some Gaussian Processes. Indiana University Mathematics Journal, Vol. 20, No. 6 (December, 1970), pp. 565-578.

\bibitem{Ginibre}
J. Ginibre, Y. Tsutsumi, G. Velo: On the Cauchy problem for the Zakharov system. {\it J. Funct. Anal.}, 151
(1997), no. 2, pp. 384-436.

\bibitem{Gubi}
M. Gubinelli, H. Koch, T. Oh: Paracontrolled approach to the three-dimensional stochastic nonlinear
wave equation with quadratic nonlinearity. arXiv:1811.07808 [math.AP].

\bibitem{Henderson}
K. L. Henderson, D. H. Peregrine, and J. W. Dold: Unsteady water wave modulations: fully
nonlinear solutions and comparison with the nonlinear Schr\"{o}dinger equation. {\it Wave Motion},
29 (1999), 341-361.

\bibitem{Mourrat}
J.-C. Mourrat, H. Weber, W. Xu: Construction of $\Phi^4_3$ diagrams for pedestrians, From particle systems
to partial differential equations, 1-46, {\it Springer Proc. Math. Stat.}, 209, Springer, Cham, 2017.

\bibitem{wick-ter} 
D. Nualart: The Malliavin Calculus and Related Topics. Probability and its Applications (New-York). Springer-Verlag, Berlin, 2006.

\bibitem{oh-okamoto-last}
T. Oh and M. Okamoto: On the stochastic nonlinear Schr{\"o}dinger equations at critical regularities. {\it Stoch. Partial Differ. Equ. Anal. Comput.} (2020), 26 pages.

\bibitem{Oh}
T. Oh, M. Okamoto. Comparing the stochastic nonlinear wave and heat equations: a case study. {\it Electron. J. Probab.} 26 (2021), paper no. 9, 44 pp.

\bibitem{oh-popovnicu-wang-last}
T. Oh, O. Pocovnicu, and Y. Wang: On the stochastic nonlinear Schr{\"o}dinger equations with non-smooth additive noise. To appear in {\it Kyoto J. Math.}

\bibitem{prod-ter} 
T. Runst and W. Sickel: Sobolev Spaces of Fractional Order, Nemytskij Operators, and Nonlinear Partial Differential Equations. Gruyter Series in Nonlinear Analysis and Applications 3, Berlin, 1996.

\bibitem{Schaeffer2}
N. Schaeffer: Study of a fractional stochastic heat equation. arXiv:2109.11780 [Math AP].

\bibitem{Taylor-ter}
M. E. Taylor: Tools for PDE. Pseudodifferential operators, paradifferential operators, and layer potentials.
Mathematical Surveys and Monographs 81. American Mathematical Society, providence, RI, 2000.

\bibitem{Tolomeo}
L. Tolomeo: Global well-posedness of the two-dimensional stochastic nonlinear wave equation on an
unbounded domain. {\it Ann. Probab.} 49 (2021), no. 3, 1402-1426

\end{thebibliography}
\end{document}